\newtheorem{thm}{Theorem}[section]
\newtheorem{cor}[thm]{Corollary}
\newtheorem{lem}[thm]{Lemma}
\newtheorem{prop}[thm]{Proposition}
\theoremstyle{definition}
\newtheorem{defn}[thm]{Definition}
\newtheorem{exe}[thm]{Example}
\theoremstyle{remark}
\numberwithin{equation}{section}
\newcommand{\eps}{\varepsilon}
\newcommand{\Rips}{\textnormal{Rips}}
\newcommand{\SL}{\textnormal{SL}}
\newcommand{\GL}{\textnormal{GL}}
\newcommand{\Z}{\mathbf{Z}}
\newcommand{\R}{\mathbf{R}}
\newcommand{\Q}{\mathbf{Q}}
\newcommand{\K}{\mathbf{K}}
\newcommand{\HNN}{\mathrm{HNN}}
\title{Zooming in on the large-scale geometry of locally compact groups}
\author{Yves de Cornulier and Pierre de la Harpe}
\date{December 2, 2015 -- revised April 15, 2016}
\subjclass[2010]{20F65, 20F05, 22D05, 51F99, 54E35, 57M07}
\keywords{
Locally compact groups, left-invariant metrics, 
$\sigma$-compactness, 
compact generation, compact presentation, 
coarse equivalence, quasi-isometry, coarse simple connectedness}
\begin{document}

\maketitle

\section{Introduction}

The purpose of this survey is to describe how locally compact groups 
can be studied as geometric objects. 
We will emphasize the main ideas and skip or just sketch most proofs, 
often referring the reader to our much more detailed book \cite{CH--16}.

It is now classical to view any finitely generated group $G$ as a geometric object. 
If $S$ is a finite generating subset, 
the Cayley graph $\Gamma(G,S)$ is the graph whose vertex set is $G$, 
and whose edges are the pairs $(g,gs)$ where $(g,s)$ ranges over $G \times S$. 
Note that this can be viewed as an oriented and labeled graph, 
possibly with self-loops (if $1 \in S$), but this does not play any role in the sequel. 
Since $S$ generates $G$, this graph is connected, 
and therefore there is a well-defined metric on the vertex set $G$, 
for which the distance $d_S(g,h)$ between $g,h \in G$ 
is the number $\ell_S(g^{-1}h)$ defined as the smallest $k$ 
such that $g^{-1}h$ can be written as a product of $k$ elements in $S^{\pm 1}$. 
This metric, called the {\bf word metric} (with respect to $S$), enjoys the following properties:

\begin{enumerate}
\item 
it is left-invariant, i.e.\ the left action of $G$ on itself is by isometries;
\item 
it is proper, in the sense that bounded subsets are finite;
\item 
it satisfies the following geodesicity property: 
for all integers $n$ and $g,h \in G$ such that $d_S(g,h)=n$, 
there exist $g_0,g_1,\dots,g_n \in G$ such that $d_S(g_{i-1},g_{i})=1$ 
for all $i=1,\dots,n$ and $(g_0,g_n)=(g,h)$.
\end{enumerate}

The main drawback of this metric is that it depends on the choice of a finite generating subset $S$; 
in particular, a metric property of this metric need not be intrinsic to $G$. 
Nevertheless, if $S'$ is another finite generating subset, an easy induction shows that, 
for some constants $c,c'>0$, we have
\[cd_{S'} \le d_S \le c'd_{S'}.\]
In other words, the identity $(G,d_S)\to (G,d_{S'})$ is a {\bf bilipschitz map}.  
 
Word metrics on finitely generated groups have proved useful on several occasions,
for example in \cite{Dehn--11}, \cite{Svar--55, Miln--68}, and \cite{Grom--81, Grom--84, Grom--93}.
It is natural to wonder how this concept generalizes to a broader setting. 
 
Let us first discuss a generalization to discrete groups, beyond finitely generated ones. 
If we consider the word metric with respect to a generating subset $S$, then obviously $S$ is bounded; 
if we require the properness of the metric, $S$ is necessarily finite, 
so, in a sense, the word metric is only suitable for finitely generated groups. 
Beyond the case of word metrics, it is natural to consider left-invariant proper metrics. 
For instance, for a group $G$ with a finite generating subset $S$ 
and a subgroup $H \subset G$, 
the restriction of $d_S$ to $H$ is a left-invariant proper metric on $H$. 
This is not, in general, a word metric on $H$. 
Actually, by a theorem of Higman-Neumann-Neumann \cite{HiNN--49}, 
every countable group is isomorphic to a subgroup of a finitely generated group, 
and thus admits a left-invariant proper metric by the above construction. 
Conversely, it is clear that the existence of a proper metric implies the countability of the group. 
The uniqueness up to bilipschitz maps fails for infinite groups, 
since when $d$ is a proper left-invariant metric on $G$ then so is $\sqrt{d}$, and $(G,\sqrt{d})$ can be checked to never be bilipschitz (nor quasi-isometric, see Definition \ref{defqi}) to $(G,d)$. 
However, if $G$ is a discrete group with two proper left-invariant metrics $d,d'$, 
there exist nondecreasing functions $\Phi_-,\Phi_+$ from the set of nonnegative numbers to itself, 
tending to $+\infty$ at $+\infty$, such that $\Phi_-\circ d' \le d \le \Phi_+\circ d'$. 
This is interpreted by saying that the identity map $(G,d)\to (G,d')$ is a {\bf coarse equivalence}.
 
A further generalization is to consider topological groups, especially locally compact groups. 
Given a topological group $G$, we consider metrics (or pseudo-metrics) on $G$; 
we {\em do not} consider the topology defined by these metrics and only refer to the given topology on $G$.
It is natural to require that compact subsets are bounded 
(noting that this is automatic when the metric or pseudo-metric is continuous). 
The properness assumption is that bounded subsets have a compact closure. 
A convenient setting is to assume the topological group to be locally compact 
(and in particular, Hausdorff, by definition); 
note that this includes discrete groups as an important particular case. 
Furthermore, to avoid local topological issues, we allow pseudo-metrics.

In \S\ref{mec}, we introduce some general metric notions, 
including coarsely Lipschitz maps and coarse equivalences. 
In \S\ref{colco}, we define the coarse language in the context of locally compact groups,
and we characterize $\sigma$-compact locally compact groups in a metric way.
In \S\ref{geno}, we introduce the coarse and large-scale geodesic notions, 
which allow to characterize compactly generated locally compact groups in a metric way. 
In \S\ref{selli}, we introduce coarsely ultrametric spaces, 
which provide a coarse characterization of locally elliptic locally compact groups, 
which generalize locally finite groups from the discrete setting. 
In \S\ref{copr}, we introduce the notion of coarse properness for metric spaces, 
which allows to define in a coarse setting the notions of growth and amenability. 
In \S\ref{coprg}, we introduce coarsely simply connected metric spaces 
and use them to characterize metrically compactly presented groups, 
which generalize finitely presented groups in the setting of locally compact groups. 
In the last two sections, we illustrate compact presentability: 
in \S\ref{bst}, we describe the Bieri-Strebel Theorem, 
which provides constraints for surjective homomorphisms of compactly presented locally groups onto $\Z$, 
and \S\ref{sexam} provides further examples.

\section{Metric categories}
\label{mec}
We denote by $\R_+$ the set of nonnegative real numbers. 
The standard metric $d$ is defined on $\R_+$ by $d(x,y) = \vert y-x \vert$.
 
The objects we will consider are pseudo-metric spaces, 
that is, pairs $(X,d)$ where $d$ is a symmetric function $X \times X \to \R_+$, 
satisfying the triangle inequality. 
By a common abuse of notation, a pair $(X,d)$ will often be identified with the underlying set $X$.

\begin{defn}
A map $f:X\to Y$ between pseudo-metric spaces is
\begin{itemize}
\item 
{\bf coarsely Lipschitz} if there exists a nondecreasing map $\Phi_+ : \R_+ \to \R_+$ 
such that $d(f(x),f(x')) \le \Phi_+(d(x,x'))$ for all $x,x' \in X$; 
we say that $f$ is $\Phi_+$-coarse;
\item 
{\bf large-scale Lipschitz} if it is $\Phi_+$-coarse for some affine function $\Phi_+$.
\end{itemize}
Two maps $f,f':X\to Y$ are {\bf close}, written $f\sim f'$, if $\sup_{x \in X}d(f(x),f'(x)) < \infty$. Equivalence classes of this equivalence relation are called {\bf closeness classes}.
\end{defn}

For instance, the map $f_a:\R_+\to\R_+$ mapping $x$ to $x^a$ ($a>0$) is coarse 
if and only if it is large-scale Lipschitz, if and only if $a \le 1$. If $(X,d)$ is an arbitrary unbounded metric space, 
then the identity map $(X,d)\to (X,\sqrt{d})$ is large-scale Lipschitz (hence coarse), 
while its inverse is coarse but not large-scale Lipschitz.

It is clear that, if $f$ is coarse (respectively large-scale Lipschitz) and $f\sim f'$, then $f'$ satisfies the same property.

\begin{defn}
The {\bf metric coarse category} (resp.\ {\bf large-scale category}) 
is the category whose objects are pseudo-metric spaces 
and morphisms are closeness classes of coarsely Lipschitz maps 
(resp.\ of large-scale Lipschitz maps). 
\end{defn}

\begin{defn}\label{defqi}
Let $f:X\to Y$ be a map between pseudo-metric spaces.
\begin{itemize}
\item 
The map $f$ is {\bf essentially surjective} if $\sup_{y \in Y} d(y,f(X)) < \infty$.
\item The map $f$ is {\bf coarsely expansive} 
if there exists a non-decreasing function\footnote{One 
could equally consider functions $\Phi_-  :  \R_+ \to \R_+ \cup \{\infty\}$.
This would not change the definition.}
$\Phi_- : \R_+ \to \R_+$ tending to infinity at infinity, 
such that $d(f(x),f(x')) \ge \Phi_-(d(x,x'))$ for all $x,x' \in X$; 
we say that $f$ is $\Phi_-$-coarsely expansive.
\item 
The map $f$ is {\bf large-scale expansive} 
if it is $\Phi$-coarsely expansive for some affine function $\Phi$. 
\item 
The map $f$ is a {\bf coarse equivalence} if it is coarse, coarsely expansive and essentially surjective.
\item 
The map $f$ is a {\bf quasi-isometry}
if it is large-scale Lipschitz, large-scale expansive and essentially surjective.
\item 
Two metric spaces $X,Y$ are {\bf coarsely equivalent} (resp.\ {\bf quasi-isometric}) 
if there exists a coarse equivalence (resp.\ quasi-isometry) $X\to Y$.
\end{itemize}
\end{defn}

\begin{prop}
\label{CategoricalCoarseQI}
Let $f : X \to Y$ be a map between pseudo-metric spaces.
\begin{enumerate}
\item 
$f$ induces an isomorphism in the metric coarse category if and only if $f$ is a coarse equivalence;
\item 
$f$ induces an isomorphism in the large-scale category if and only if $f$ is a quasi-isometry.
\end{enumerate} 
In particular, to be coarsely equivalent (resp.\ quasi-isometric) 
is an equivalence relation between pseudo-metric spaces.
\end{prop}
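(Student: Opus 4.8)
The plan is to prove both statements by unwinding the categorical definition of isomorphism. In either category, a morphism is a closeness class of a map (coarsely Lipschitz in the coarse case, large-scale Lipschitz in the large-scale case), and $f$ induces an isomorphism precisely when there is a morphism going the other way, represented by some map $g\colon Y\to X$ of the appropriate type, such that $g\circ f\sim \mathrm{id}_X$ and $f\circ g\sim \mathrm{id}_Y$. So the real content is the equivalence, for a coarse (resp.\ large-scale Lipschitz) map $f$, between the two conditions: (i) $f$ is coarsely expansive and essentially surjective; and (ii) there exists a coarse (resp.\ large-scale Lipschitz) map $g$ with $g\circ f$ and $f\circ g$ close to the identities. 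I would treat the two parts in parallel, since the large-scale case is the coarse argument with ``nondecreasing $\Phi$'' replaced throughout by ``affine $\Phi$''.

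\medskip
\noindent\textbf{From (i) to (ii).} Assume $f$ is a coarse equivalence. First I would construct a quasi-inverse $g$ by choosing, for each $y\in Y$, a point $g(y)\in X$ whose image $f(g(y))$ is within the essential-surjectivity constant of $y$; this uses the axiom of choice and requires $X$ nonempty (the empty case being trivial). The identity $f\circ g\sim\mathrm{id}_Y$ is then immediate from the choice. For $g\circ f\sim\mathrm{id}_X$, I would use coarse expansiveness: $d(g(f(x)),x)$ is controlled because $f$ maps both $f(g(f(x)))$ and $f(x)$ to within a bounded distance of each other, and $\Phi_-$ tending to infinity forces the preimage distance to be bounded. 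The step that needs care is checking that $g$ is itself coarsely Lipschitz: given $y,y'$, one bounds $d(f(g(y)),f(g(y')))$ via the triangle inequality and essential surjectivity, then applies coarse expansiveness of $f$ to bound $d(g(y),g(y'))$ by $\Phi_-^{-1}$ of that quantity. Here one must be slightly careful since $\Phi_-$ need not be strictly increasing or invertible, so I would instead define the control function for $g$ by $\Phi'(t)=\sup\{s:\Phi_-(s)\le t\}+1$ or an analogous generalized inverse. In the large-scale case $\Phi_-$ is affine and $f$ is large-scale Lipschitz, and one checks the resulting $\Phi'$ can again be taken affine.

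\medskip
\noindent\textbf{From (ii) to (i).} Conversely, suppose $g$ is a coarse (resp.\ large-scale Lipschitz) map with $g\circ f\sim\mathrm{id}_X$ and $f\circ g\sim\mathrm{id}_Y$, with closeness constants $C_X,C_Y$. Essential surjectivity of $f$ follows directly: every $y$ satisfies $d(y,f(g(y)))\le C_Y$, so $d(y,f(X))\le C_Y$. For coarse expansiveness I would argue that, since $g$ is $\Psi_+$-coarse for some nondecreasing $\Psi_+$, one has $d(g(f(x)),g(f(x')))\le\Psi_+(d(f(x),f(x')))$, while the left-hand side is within $2C_X$ of $d(x,x')$; rearranging gives $d(f(x),f(x'))\ge\Phi_-(d(x,x'))$ with $\Phi_-$ a suitable generalized inverse of $\Psi_+$ shifted by $2C_X$, which tends to infinity because $\Psi_+$ is finite-valued. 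In the large-scale case $\Psi_+$ is affine, so $\Phi_-$ is affine as well, yielding large-scale expansiveness.

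\medskip
The main obstacle I anticipate is purely bookkeeping rather than conceptual: handling the generalized inverses of the nondecreasing control functions cleanly, since the $\Phi_\pm$ are not assumed invertible, and making sure all constants compose correctly through the triangle-inequality estimates. Once (i)$\Leftrightarrow$(ii) is established in both categories, the final ``in particular'' clause is automatic, because isomorphism in any category is an equivalence relation; reflexivity, symmetry, and transitivity of coarse equivalence and of quasi-isometry then follow without further work.
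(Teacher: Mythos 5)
Your proposal is correct and follows exactly the route the paper intends: the paper records no proof of its own, calling the verification ``a routine exercise'' and deferring to \cite{CH--16} (Propositions 3.A.16 and 3.A.22), where the argument is precisely your quasi-inverse construction --- choosing $g(y)$ near a preimage via essential surjectivity, controlling $g$ and the compositions through generalized inverses of the nondecreasing (resp.\ affine) control functions, and conversely extracting expansiveness and essential surjectivity from a two-sided inverse up to closeness. The only micro-gap is the degenerate affine case in the large-scale direction (slope zero in $\Psi_+$, forcing $X$ bounded), which is removed by noting one may always increase the slope of an affine upper control function.
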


\begin{exe}
\begin{enumerate}
\item 
Let $X$ be a pseudo-metric space and let $\widehat{X}$ be its Hausdorffization, 
namely the metric space obtained from $X$ by identifying points at distance 0. 
Then the canonical projection $X \to \widehat{X}$ is a quasi-isometry (and hence a coarse equivalence). 
Thus, in the metric coarse category and in the large-scale category, 
the full subcategories where objects are metric spaces are essential.
\item 
Let $X$ be a metric space and $\overline{X}$ its completion. 
Then the canonical injection $X \to \overline{X}$ is a quasi-isometry 
(it is indeed an isometry onto a dense subset).
\item 
Let $X$ be a pseudo-metric space and let $Y \subset X$ be a subset 
maximal for the property that any two points in $Y$ have distance $\ge 1$. 
Then the isometric injection $Y \subset X$ is essentially surjective and thus is a quasi-isometry. 
Thus every metric space is quasi-isometric to a discrete one.
\item 
To be bounded is invariant under coarse equivalence, 
and all non-empty bounded pseudo-metric spaces are quasi-isometric.
\end{enumerate}
\end{exe} 

\noindent
\textbf{On proofs.}
The verification of the claims of Proposition \ref{CategoricalCoarseQI} is a routine exercise.
See in \cite{CH--16} Section 3.A, in particular Propositions 3.A.16 and 3.A.22.
 
\section{Coarse category of locally compact groups}
\label{colco}

\subsection{The abstract coarse category}
\label{clc}

\begin{defn}
\label{defcmcc}
Let $G, H$ be locally compact groups and $f : G \to H$ a map
(not necessarily a homomorphism or continuous).

Then $f$ is a {\bf coarse map} if, for every compact subset $K \subset G$, 
there exists a compact subset $L \subset H$ such that, for all $g,g' \in G$, 
the relation $g^{-1}g' \in K$ implies $f(g)^{-1}f(g') \in L$.

Let $f' : G \to H$ be another map. 
Then $f$ and $f'$ are {\bf close} 
if the set $\{ h \in H \mid h = f(g)^{-1}f'(g)\hskip.2cm \text{for some} \hskip.2cm g \in G\}$ 
has a compact closure. 
Equivalence classes of this equivalence relation are called {\bf closeness classes}.
\end{defn}
 
For instance, 
any continuous homomorphism between locally compact groups is a coarse map. 
If we have a semidirect product of locally compact groups $G=H\rtimes K$ with $K$ compact, 
then the mapping $hk\mapsto h$, for $(h,k) \in H \times K$,
is close to the identity of $G$ (but is in general not a homomorphism).

\begin{defn}
The {\bf coarse category of locally compact groups} 
is the category in which objects are locally compact groups 
and morpisms are closeness classes of coarse maps.
\end{defn}
 
\begin{defn}
\label{defesce}
Let $G, H$ be locally compact groups and $f : G \to H$ a map.
\par
Then $f$ is {\bf essentially surjective} 
if there exists a compact subset $L \subset H$ such that 
$H = f(G)L := \{ f(g)l \mid g \in G, l \in L \}$,
\par
and $f$ is {\bf coarsely expansive} if, 
for every compact subset $L \subset H$, there exists a compact subset $K \subset G$ 
such that, for all $g,g' \in G$, the relation $g^{-1}g' \notin K$ implies $f(g)^{-1}f(g')\notin L$.
\end{defn}

When $\sigma$-compact locally compact groups are treated as metric objects, 
the terminology of Definitions \ref{defcmcc} to \ref{defesce}
could be in conflict with the metric notions of \S\ref{mec}; 
nevertheless Proposition \ref{CompCoarseEtCoaremetriquePourG} 
will show that these are equivalent notions.

\begin{prop}
\label{GroupsMapsCoarseIso}
A map $f:G\to H$ between locally compact groups 
induces an isomorphism in the coarse category 
if and only if $f$ is a coarse map, is coarsely expansive, and is essentially surjective.
\end{prop}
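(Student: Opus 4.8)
The plan is to follow the same scheme as for Proposition \ref{CategoricalCoarseQI}(1), but working directly with the compact-set formulation of Definitions \ref{defcmcc}--\ref{defesce} rather than passing through a metric. Recall that $f$ induces an isomorphism in the coarse category precisely when there is a coarse map $f':H\to G$ whose closeness class is inverse to that of $f$, i.e. $f'\circ f$ is close to $\mathrm{id}_G$ and $f\circ f'$ is close to $\mathrm{id}_H$. So the content is to match the existence of such an $f'$ with the three listed conditions.

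For the implication $(\Leftarrow)$ I would build $f'$ by a choice. Essential surjectivity provides a compact $L_0\subset H$ with $H=f(G)L_0$; for each $h\in H$ pick $g_h\in G$ and $l_h\in L_0$ with $h=f(g_h)l_h$, and set $f'(h)=g_h$. The verification then splits into three routine manipulations, each resting on coarse expansiveness of $f$. First, $f'$ is coarse: if $h^{-1}h''$ lies in a compact $K'$, then $f(g_h)^{-1}f(g_{h''})=l_h(h^{-1}h'')l_{h''}^{-1}$ lies in the compact set $L_0K'L_0^{-1}$, so coarse expansiveness forces $g_h^{-1}g_{h''}$ into a compact $K$. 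Second, $f\circ f'$ is close to $\mathrm{id}_H$ essentially by construction, since $h^{-1}f(g_h)=l_h^{-1}\in L_0^{-1}$. Third, $f'\circ f$ is close to $\mathrm{id}_G$: from $f(g_{f(g)})^{-1}f(g)=l_{f(g)}\in L_0$, coarse expansiveness applied to the compact $L_0$ confines $g^{-1}g_{f(g)}$ to a fixed compact set.

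For $(\Rightarrow)$ I would suppose a coarse inverse $f'$ exists; then $f$ is coarse since it represents a morphism, and the remaining two properties are read off from the closeness relations. Essential surjectivity comes from $f\circ f'$ close to $\mathrm{id}_H$: writing $h^{-1}f(f'(h))\in N$ for a fixed compact $N$ gives $h\in f(G)N^{-1}$. Coarse expansiveness comes from combining coarseness of $f'$ with $f'\circ f$ close to $\mathrm{id}_G$. Indeed, given a compact $L\subset H$, coarseness of $f'$ yields a compact $L'\subset G$ with $f(g)^{-1}f(g')\in L\Rightarrow f'(f(g))^{-1}f'(f(g'))\in L'$; and closeness, writing $f'(f(g))=g m_g$ with $m_g$ in a fixed compact $M$, then places $g^{-1}g'$ in the compact set $ML'M^{-1}$, which serves as the desired $K$.

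The one genuinely load-bearing point, and where I expect the only real work to lie, is that \emph{coarse expansiveness is exactly the inverse-image condition} needed on both sides: it is precisely what lets compact subsets be pulled back through $f$, and it is invoked three times in $(\Leftarrow)$ and once (through $f'$) in $(\Rightarrow)$. Everything else reduces to the standing fact that continuous group operations send compact sets to compact sets (products $AB$, inverses $A^{-1}$, conjugates $gAg^{-1}$), which here replaces the triangle-inequality bookkeeping of the metric proof. The appeal to choice in defining $f'$ is harmless, as no continuity or homomorphism property of $f'$ is required.
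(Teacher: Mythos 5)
Your proposal is correct and takes essentially the same approach as the paper: the paper's sketch builds exactly the same choice-based inverse (picking $x_y$ with $y \in f(x_y)L$ from essential surjectivity) and leaves the remaining verifications as an exercise, which your argument carries out. The details you supply --- pulling compact sets back through $f$ via the contrapositive of coarse expansiveness, the identities $f(g_h)^{-1}f(g_{h''}) = l_h(h^{-1}h'')l_{h''}^{-1}$ and $g^{-1}g' \in ML'M^{-1}$, and reading off the converse from the closeness relations --- are precisely the "elementary to check" steps the paper omits.
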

 
An important example is the case of continuous homomorphisms.

\begin{prop}
\label{GroupsHomosCoarseIso}
Let $f : G \to H$ be a continuous homomorphism of locally compact groups. 
\par
Then $f$ is a coarse map. 
It is coarsely expansive if and only if it is proper, 
i.e.\  if and only if it has a compact kernel and a closed image.
It is essentially surjective if and only if $H/\overline{f(G)}$ is compact. 
\par
In particular, 
$f$ induces an isomorphism in the coarse category 
if and only it is proper and has a cocompact image.
\end{prop}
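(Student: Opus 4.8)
The plan is to establish the four assertions in turn, using throughout the homomorphism identity $f(g)^{-1}f(g') = f(g^{-1}g')$, and then to read off the last clause from Proposition \ref{GroupsMapsCoarseIso}. That $f$ is a coarse map is immediate: given a compact $K \subseteq G$, the set $L := f(K)$ is compact by continuity, and $g^{-1}g' \in K$ forces $f(g)^{-1}f(g') = f(g^{-1}g') \in f(K) = L$. For coarse expansiveness I would first rewrite the defining condition via the same identity: putting $x = g^{-1}g'$, which ranges over all of $G$ as $(g,g')$ does, coarse expansiveness is equivalent to the requirement that for every compact $L \subseteq H$ there be a compact $K \subseteq G$ with $f^{-1}(L) \subseteq K$. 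Since $L$ is closed and $f$ continuous, $f^{-1}(L)$ is closed, so this is the same as $f^{-1}(L)$ being compact; thus coarse expansiveness is exactly properness of $f$ in the sense that preimages of compact sets are compact.

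The substantive point is then the equivalence of this properness with ``compact kernel and closed image''. One direction is easy: if $f$ is proper, then $\ker f = f^{-1}(\{1\})$ is compact, and a proper continuous map into a locally compact Hausdorff space is closed, so $f(G)$ is closed. For the converse I would factor $f$ as $G \to G/\ker f \xrightarrow{\bar f} f(G) \hookrightarrow H$, where $f(G)$, being closed in $H$, is locally compact and $\bar f$ is a continuous bijective homomorphism. The main obstacle is precisely to show that $\bar f$ is a homeomorphism, for then $\bar f$ is proper and, after intersecting any compact $L \subseteq H$ with the closed subgroup $f(G)$, so is $f$. This homeomorphism statement is the open mapping theorem for locally compact groups and genuinely needs a $\sigma$-compactness hypothesis: for non-$\sigma$-compact $G$ the identity $\R_{d} \to \R$ from the discrete reals has compact (trivial) kernel and closed (full) image yet is not proper. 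I would therefore invoke $\sigma$-compactness at this step (or cite \cite{CH--16}); this is where the real work lies.

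For essential surjectivity, write $M := \overline{f(G)}$, a closed, hence locally compact, subgroup of $H$. If $H = f(G)L$ with $L$ compact, then $H = f(G)L \subseteq ML$, and the projection $p : H \to M\backslash H$ satisfies $p(ML) = p(L)$, so $M\backslash H = p(L)$ is compact; equivalently $H/M$ is compact. Conversely, choosing a compact symmetric neighbourhood $V$ of $1$ in $M$ and using density of $f(G)$ in $M$ (any $m \in M$ admits $g$ with $f(g) \in mV$, whence $m \in f(G)V$), I get $M = f(G)V$; if moreover $H/M$ is compact, then $H = MC$ for some compact $C$, so $H = f(G)(VC)$ with $VC$ compact, i.e. $f$ is essentially surjective.

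Finally, the ``in particular'' clause follows by combining these facts with Proposition \ref{GroupsMapsCoarseIso}: $f$ is an isomorphism in the coarse category if and only if it is a coarse map, coarsely expansive and essentially surjective, that is, by the above, if and only if $f$ is proper and $H/\overline{f(G)}$ is compact. When $f$ is proper its image is closed, so $\overline{f(G)} = f(G)$ and the last condition says exactly that $f(G)$ is cocompact.
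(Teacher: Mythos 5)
Your argument is correct, and it matches the paper's intended route: the paper explicitly leaves this proposition as an exercise (after sketching the inverse construction for Proposition \ref{GroupsMapsCoarseIso}), so your write-up in effect supplies the omitted details. The translation of coarse expansiveness into properness via $x = g^{-1}g'$, the closedness of proper continuous maps into locally compact Hausdorff spaces, and the thickening argument $M = f(G)V$, $H = f(G)VC$ for essential surjectivity are exactly the expected routine verifications. The one substantive point you isolate is precisely the one the paper flags in the remark immediately following the proposition: the implication ``compact kernel and closed image $\Rightarrow$ proper'' genuinely requires $G$ to be $\sigma$-compact, and the paper's source for it is a result of Freudenthal (Corollary 2.D.6 of \cite{CH--16}), which plays the role of the open mapping theorem you invoke for $\bar f$ in the factorization $G \to G/\ker f \to f(G) \hookrightarrow H$. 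Your counterexample (the identity from the discrete reals onto $\R$) correctly shows that, read literally, the ``i.e.'' in the statement fails without that hypothesis, so your decision to invoke $\sigma$-compactness or cite \cite{CH--16} at that step is the right one and agrees with the paper's own treatment.
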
 

Recall that a continuous map between locally compact topological spaces 
is proper if the inverse image of every compact subset is compact. 
Let $G, H$ be locally compact groups;
if a  continuous homomorphism $G \to H$ is proper, 
then it has a compact kernel and a closed image;
when $G$ is moreover $\sigma$-compact, 
the converse is true
(this follows from a result of Freudenthal, 
see Corollary 2.D.6 of \cite{CH--16}).

Being $\sigma$-compact is a coarse invariant among locally compact groups:

\begin{prop}
\label{SigmacomopactAndCE}
If $G$ and $H$ are coarsely equivalent locally compact groups 
and $G$ is $\sigma$-compact 
then so is $H$.
\end{prop}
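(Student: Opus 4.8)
The plan is to extract from the coarse equivalence a single coarse, essentially surjective map $f\colon G\to H$, and then to transport a countable compact covering of $G$ across $f$. By Proposition \ref{GroupsMapsCoarseIso}, the hypothesis that $G$ and $H$ are coarsely equivalent provides a map $f\colon G\to H$ that is coarse, coarsely expansive and essentially surjective; only coarseness and essential surjectivity will actually be needed, so coarse expansiveness plays no role in this direction.

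Since $G$ is $\sigma$-compact, write $G=\bigcup_{n\ge 1}K_n$ with each $K_n\subset G$ compact. First I would feed each $K_n$ into the definition of coarseness (Definition \ref{defcmcc}): for each $n$ there is a compact $L_n\subset H$ such that $g^{-1}g'\in K_n$ implies $f(g)^{-1}f(g')\in L_n$ for all $g,g'\in G$. Specializing to $g=1_G$ and letting $g'=x$ range over $K_n$, this says precisely that $f(x)\in f(1_G)L_n$ whenever $x\in K_n$; hence $f(K_n)\subset f(1_G)L_n$. Taking the union over $n$ gives $f(G)\subset\bigcup_{n\ge 1} f(1_G)L_n$.

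It then remains to promote this to a covering of all of $H$. By essential surjectivity (Definition \ref{defesce}) there is a compact $L_0\subset H$ with $H=f(G)L_0$, so
\[
H \;=\; f(G)L_0 \;\subset\; \bigcup_{n\ge 1} f(1_G)\,L_n L_0 .
\]
Because $H$ is a topological group, each product $L_nL_0$ is compact (it is the image of the compact set $L_n\times L_0$ under the continuous multiplication map), and each left translate $f(1_G)L_nL_0$ is compact as well (left translation by a fixed element is a homeomorphism). Thus $H$ is a countable union of compact subsets, i.e.\ $\sigma$-compact.

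The argument is essentially routine; the only point requiring care is that $f$ is not assumed continuous, so one cannot directly argue that $f(K_n)$ is compact. The role of the coarseness hypothesis is exactly to replace continuity here, producing compact targets $L_n$ that trap $f(K_n)$ up to the fixed translation by $f(1_G)$. I therefore expect no serious obstacle beyond bookkeeping of these compact sets and recalling that products and translates of compacta remain compact in a topological group.
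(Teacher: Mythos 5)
Your proof is correct: using only the coarseness of $f$ (with $g=1_G$ to trap $f(K_n)$ in $f(1_G)L_n$) together with essential surjectivity is exactly the routine argument intended here, and the observation that coarse expansiveness is not needed is accurate. The paper itself leaves this verification as an exercise (see the ``On proofs'' paragraph following Proposition~\ref{SigmacomopactAndCE}), so your write-up supplies precisely the expected proof, with no gaps.
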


\noindent
\textbf{On proofs.}
For one implication in Proposition \ref{GroupsMapsCoarseIso},
assume that $f : G \to H$ 
is coarse, coarsely expansive, and essentially surjective.
Let $L$ be a compact subset of $H$ such that $H = f(G)L$.
For every $y \in H$, choose $x_y \in G$ such that $y \in f(x_y)L$
and set $h(y) = x_y$.
It is elementary to check that $h : H \to G$
is well-defined up to closeness, coarse, 
and that its closeness class is the inverse of that of $f$. 
What remains to prove for Propositions
\ref{GroupsMapsCoarseIso}, \ref{GroupsHomosCoarseIso}
and \ref{SigmacomopactAndCE} 
is left as an exercise for the reader.

\subsection{Locally compact groups as pseudo-metric spaces} 

The abstract coarse theory can be expressed using the language of pseudo-metric spaces, under an extra assumption on the locally compact groups, namely when they are $\sigma$-compact, that is, are countable unions of compact subsets. This includes most familiar examples.

\begin{defn}
\label{apd}
Let $G$ be a locally compact group. 
An {\bf adapted pseudo-metric} on $G$ is a pseudo-metric which is
\begin{itemize}
\item left-invariant, 
\item locally bounded (compact subsets are bounded);
\item proper (bounded subsets have a compact closure).
\end{itemize}\end{defn}

\begin{thm}
\label{ExistAdapted}
A locally compact group admits an adapted pseudo-metric 
if and only if it is $\sigma$-compact.
\end{thm}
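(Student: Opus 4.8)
The forward implication is immediate. Suppose $G$ carries an adapted pseudo-metric $d$, and for $n \in \N$ put $B_n = \{g \in G : d(1,g) \le n\}$. Since $d$ is real-valued we have $G = \bigcup_{n} B_n$, and since $d$ is proper each closure $\overline{B_n}$ is compact; hence $G = \bigcup_n \overline{B_n}$ is $\sigma$-compact. (Left-invariance is not even needed here.)

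\textbf{Building a filtration.} For the converse, suppose $G$ is $\sigma$-compact and write $G = \bigcup_n C_n$ with each $C_n$ compact. The plan is to manufacture a word-type length function from an exhaustion by compact sets and then correct it into a genuine pseudo-metric. First I would fix a compact symmetric neighbourhood $V$ of $1$ and build, recursively, an increasing sequence of compact symmetric neighbourhoods $U_0 \subseteq U_1 \subseteq \cdots$ of $1$ with $U_0 = V$, $U_n U_n \subseteq U_{n+1}$, and $C_{n+1} \subseteq U_{n+1}$; taking at each stage $U_{n+1}$ to be a symmetrization of $U_n U_n \cup C_{n+1}$ does the job. Then $\bigcup_n U_n \supseteq \bigcup_n C_n = G$, each $U_n$ is compact, and since $U_n \supseteq U_{n-1}U_{n-1} \supseteq U_{n-1}\,\mathrm{int}(V)$ one gets $U_{n-1} \subseteq \mathrm{int}(U_n)$, so the interiors form an increasing open cover of $G$.

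\textbf{From a level function to a pseudo-metric.} Next I would define a length by reading off the level of the filtration: set $f(1) = 0$ and $f(g) = \inf\{2^n : n \ge 0,\ g \in U_n\}$ for $g \ne 1$. Then $f$ is symmetric (the $U_n$ are symmetric) and satisfies the quasi-subadditivity $f(gh) \le 2\max(f(g),f(h)) \le 2(f(g)+f(h))$, since $g,h \in U_n$ forces $gh \in U_{n+1}$. A direct induction upgrades this to $f(s_1\cdots s_k) \le 2\sum_i f(s_i)$. I then pass to the associated left-invariant pseudo-metric by the Birkhoff--Kakutani chain construction,
\[ d(g,h) = \inf\Big\{ \textstyle\sum_{i=1}^{k} f(x_{i-1}^{-1}x_i) : k \ge 1,\ x_0 = g,\ x_k = h \Big\}, \]
which is automatically left-invariant, symmetric and satisfies the triangle inequality, and for which the induction above yields the two-sided comparison $\tfrac12 f(g^{-1}h) \le d(g,h) \le f(g^{-1}h)$.

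\textbf{Verifying adaptedness, and the main obstacle.} It remains to read off that $d$ is adapted. For local boundedness, any compact $K$ is covered by finitely many of the sets $\mathrm{int}(U_n)$, so $K \subseteq U_N$ for some $N$, whence $d(1,g) \le f(g) \le 2^N$ for all $g \in K$. For properness, the bound $d \ge \tfrac12 f$ gives $\{g : d(1,g) \le r\} \subseteq \{g : f(g) \le 2r\} \subseteq U_N$ with $N = \lceil \log_2(2r)\rceil$, which is relatively compact. The main obstacle is exactly the passage from the crude level function $f$, which obeys the triangle inequality only up to a multiplicative error, to the genuine pseudo-metric $d$: the chain construction always produces a pseudo-metric, but the content is the lower bound $d \ge \tfrac12 f$, which is what prevents $d$ from collapsing (note that had one instead normalized so that $f \equiv 0$ on $V$, the resulting pseudo-metric would vanish identically on the open subgroup generated by $V$). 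Finally, I would observe that $d$ need not be continuous, which is precisely why allowing pseudo-metrics lets the statement cover non-metrizable groups rather than only the first-countable ones handled by Struble's theorem.
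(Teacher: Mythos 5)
Your overall plan (easy direction; compact symmetric filtration $U_0\subseteq U_1\subseteq\cdots$ with $U_nU_n\subseteq U_{n+1}$; level function $f$; chain pseudo-metric $d$) is reasonable and genuinely different from the paper's, but it contains one real gap: the inequality $f(s_1\cdots s_k)\le 2\sum_i f(s_i)$ does \emph{not} follow by ``direct induction'' from the two-factor bound $f(gh)\le 2\max\bigl(f(g),f(h)\bigr)$, and in fact it is false for your construction. Run your recursion in the free group $F=\langle a,b\rangle$ (discrete, hence $\sigma$-compact), with $U_0=\{1,a^{\pm1}\}$, $C_1=\{1\}$, $C_2=\{1,b^{\pm1}\}$, $C_3=\{1\}$, and later $C_n$ exhausting $F$. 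Then $U_1=U_0U_0=\{a^j:|j|\le 2\}$, $U_2=U_1U_1\cup\{b^{\pm1}\}$, and $U_3=U_2U_2=\{a^j:|j|\le 8\}\cup\{a^jb^{\pm1},\,b^{\pm1}a^j:|j|\le 4\}\cup\{b^{\pm 2}\}$, all symmetric. Hence $f(a)=1$, $f(b)=4$, while $aba\notin U_3$ and $aba\in U_3U_3\subseteq U_4$, so $f(aba)=16>12=2\bigl(f(a)+f(b)+f(a)\bigr)$; moreover $d(1,aba)\le f(a)+f(b)+f(a)=6<8=\tfrac12 f(aba)$, so the two-sided comparison you rely on fails outright, not merely its proof. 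The structural reason: the classical Birkhoff--Kakutani chain lemma requires the \emph{three}-factor inequality $f(g_1g_2g_3)\le 2\max_i f(g_i)$ (one splits the chain into a prefix and a suffix each of weight $\le S/2$, plus one middle letter), whereas doubling only gives $U_nU_nU_n\subseteq U_{n+2}$, i.e.\ a factor $4$; and with only the two-factor bound the naive induction never closes for \emph{any} constant, since merging three blocks costs two levels while halving the weight gains only one.

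Since properness of $d$ rests entirely on a lower bound $d\ge cf$, this step is the crux and must be repaired. Two repairs work: (i) strengthen the recursion to $U_{n+1}\supseteq U_nU_nU_n\cup C_{n+1}$ (symmetrized); then $f(g_1g_2g_3)\le 2\max_i f(g_i)$ holds, the standard induction gives your inequality with constant $2$, and the rest of your verification goes through verbatim; or (ii) keep doubling and prove $f(s_1\cdots s_k)\le 4\sum_i f(s_i)$, which is true but needs an unbalanced splitting argument of Frink/Gilbert--Moore type, not a direct induction. Note that the paper's proof sidesteps the chain lemma entirely: it defines $d$ directly as the path metric of the weighted graph on vertex set $G$ with an edge $(g,gs)$ of length $n$ for every $s\in K_n$, where $K_n\subseteq\operatorname{int}(K_{n+1})$ exhausts $G$. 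There the triangle inequality is automatic (it is a graph metric), and properness is immediate from the inclusion of the $n$-ball around $1$ in the finite union of the sets $K_{n_1}\cdots K_{n_k}$ over tuples with $n_1+\cdots+n_k=n$, with no comparison constant to track. Your easy direction, the construction of the filtration itself, and the closing remarks on non-continuity and on Struble's theorem are all correct.
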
 

\begin{proof}[Proof (sketch)]
One direction is clear. 
Conversely, suppose that $G$ is $\sigma$-compact 
and write $G=\bigcup K_n$ with $K_n$ a compact subset, 
contained in the interior of $K_{n+1}$. 
Consider the metric graph with $G$ as set of vertices 
and an edge $(g,gs)$ of length $n$ for all $n$ and every $(g,s) \in G \times K_n$. 
Then this graph is connected (since there is at least one edge between any two vertices), 
the graph metric on the set $G$ of vertices is left-invariant, 
and each compact subset of $G$ is bounded, being contained in some $K_n$. 
Moreover, bounded subsets have a compact closure: 
indeed, for every $n\ge 1$ the $n$-ball around $1$ 
is contained in the union of the $K_{n_1} \cdots K_{n_k}$, 
where $k\ge 1$ and $(n_1,\dots,n_k)$ ranges over the $k$-tuples of positive integers with sum $n$. 
Thus $G$ admits an adapted pseudo-metric 
(indeed a metric, since any two distinct points are at distance $\ge 1$). 
\end{proof}

\begin{prop}
\label{TwoAdaptedOnG}
Let $G$ be a $\sigma$-compact locally compact group. 
For any two adapted pseudo-metrics $d,d'$ on $G$, 
the identity map of pseudo-metric spaces $(G,d)\to (G,d')$ is a coarse equivalence.
\end{prop}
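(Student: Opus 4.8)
The plan is to verify, for the identity map $f = \mathrm{id}\colon (G,d) \to (G,d')$, the three conditions defining a coarse equivalence in Definition \ref{defqi}: coarse Lipschitzness, coarse expansiveness, and essential surjectivity. Essential surjectivity is immediate, since $f(G) = G$ forces $\sup_{y}d'(y,f(G)) = 0$. The two remaining conditions are dual to each other: the lower bound $d' \ge \Phi_- \circ d$ defining coarse expansiveness of $(G,d)\to(G,d')$ is, up to taking a generalized inverse, the same as the upper bound $d \le \Phi_+ \circ d'$ defining coarse Lipschitzness of $(G,d')\to(G,d)$. Because $d$ and $d'$ enter symmetrically, it therefore suffices to establish a single statement: \emph{for any two adapted pseudo-metrics, the identity is coarsely Lipschitz}, and then apply it with the roles of $d,d'$ exchanged.

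To take advantage of left-invariance I would pass to the length functions $\ell(g) = d(1,g)$ and $\ell'(g) = d'(1,g)$, so that $d(x,x') = \ell(x^{-1}x')$ and $d'(x,x') = \ell'(x^{-1}x')$. Coarse Lipschitzness then amounts to finding a nondecreasing $\Phi_+\colon \R_+ \to \R_+$ with $\ell'(g) \le \Phi_+(\ell(g))$ for every $g \in G$. The natural candidate is
\[
\Phi_+(r) = \sup\{\, \ell'(g) : \ell(g) \le r \,\},
\]
which is nondecreasing by construction and satisfies the desired inequality tautologically; the entire content is to show that $\Phi_+(r)$ is finite for each $r$.

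This finiteness is the crux, and it is exactly where the two structural properties of adapted pseudo-metrics combine. The ball $B_r = \{ g : d(1,g) \le r \}$ has $d$-diameter at most $2r$ by the triangle inequality, hence is $d$-bounded; since $d$ is proper, its closure $\overline{B_r}$ is compact. Since $d'$ is locally bounded, this compact set is $d'$-bounded, and as $1 \in B_r \subseteq \overline{B_r}$ we obtain $\Phi_+(r) = \sup_{g\in B_r}d'(1,g) \le \mathrm{diam}_{d'}(\overline{B_r}) < \infty$. In one phrase: properness of $d$ turns $d$-balls into relatively compact sets, and local boundedness of $d'$ turns relatively compact sets into $d'$-bounded ones. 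Left-invariance then propagates the estimate to all pairs, $d'(x,x') = \ell'(x^{-1}x') \le \Phi_+(\ell(x^{-1}x')) = \Phi_+(d(x,x'))$, giving coarse Lipschitzness of $(G,d)\to(G,d')$.

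Running the same argument with $d$ and $d'$ interchanged produces a finite nondecreasing $\Psi_+$ with $d \le \Psi_+ \circ d'$. To finish I would convert this into coarse expansiveness by setting $\Phi_-(t) = \inf\{\, \ell'(g) : \ell(g) \ge t \,\}$; this is nondecreasing and satisfies $d' \ge \Phi_- \circ d$, and the bound $d \le \Psi_+ \circ d'$ forces $\Phi_- \to \infty$ (otherwise there would be elements $g_n$ with $\ell(g_n)\to\infty$ yet $\ell'(g_n)$ bounded, contradicting $\ell(g_n)\le \Psi_+(\ell'(g_n))$). This last conversion, rather than any deep point, is the only step requiring mild care; everything else rests on the single finiteness argument above, in which the properness of one pseudo-metric and the local boundedness of the other play complementary roles.
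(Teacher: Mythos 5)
Your proof is correct, and it is essentially the argument the paper defers to (\cite{CH--16}, Corollary 4.A.6): left-invariance reduces everything to length functions, and the finiteness of $\Phi_+(r)=\sup\{\ell'(g) : \ell(g)\le r\}$ follows precisely from properness of $d$ (balls have compact closure) combined with local boundedness of $d'$ (compact sets are bounded), with the symmetric bound yielding coarse expansiveness. The only point worth a remark is that your $\Phi_-(t)=\inf\{\ell'(g) : \ell(g)\ge t\}$ may take the value $+\infty$ when $G$ is bounded, but the paper's footnote to Definition \ref{defqi} notes this is harmless (or one may replace $\Phi_-(t)$ by $\min(\Phi_-(t),t)$).
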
 

If $G$ is a $\sigma$-compact locally compact group, 
it admits an adapted pseudo-metric $d$ by Theorem \ref{ExistAdapted},
and this allows to view $(G,d)$ as a well-defined object in the metric coarse category.

The following proposition shows that, for $\sigma$-compact locally compact groups, 
on which the definitions of \S\ref{mec} and \S\ref{clc} both make sense, the definitions are consistent.

\begin{prop}
\label{CompCoarseEtCoaremetriquePourG}
If $(G,d)$ and $(G',d')$ are $\sigma$-compact locally compact groups 
with adapted pseudo-metrics, 
a map $f:G\to G'$ is a coarse map of locally compact groups (in the sense of \S\ref{clc}) 
if and only if is a coarsely Lipschitz map of pseudo-metric spaces (in the sense of \S\ref{mec}). 
The same holds for coarsely expansive maps, essentially surjective maps, coarse equivalences, and closeness.
\end{prop}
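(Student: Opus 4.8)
The plan is to set up a dictionary between the topological notion ``compact subset of the group'' and the metric notion ``bounded subset of the pseudo-metric space,'' using exactly the two defining features of an adapted pseudo-metric. Write $\ell_G(x)=d(1,x)$ and $\ell_{G'}(y)=d'(1,y)$; by left-invariance one has $d(g,g')=\ell_G(g^{-1}g')$ and $d'(f(g),f(g'))=\ell_{G'}(f(g)^{-1}f(g'))$, so every two-point condition reduces to a single-point condition on these length functions, and the group relations $g^{-1}g'\in K$, $f(g)^{-1}f(g')\in L$ become statements about $\ell_G,\ell_{G'}$. The two facts I would use repeatedly are: (i) by local boundedness, every compact $K\subseteq G$ lies in a ball $\{x:\ell_G(x)\le r\}$ (and likewise in $G'$); and (ii) by properness, every ball $\{x:\ell_G(x)\le r\}$ has compact closure (and likewise in $G'$). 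Once (i) and (ii) are in hand, each of the five equivalences is a direct substitution.

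First I would treat coarse versus coarsely Lipschitz. Given coarsely Lipschitz $f$ with modulus $\Phi_+$ and a compact $K\subseteq G$, use (i) to get $K\subseteq\{\ell_G\le r\}$; then $g^{-1}g'\in K$ forces $\ell_{G'}(f(g)^{-1}f(g'))\le\Phi_+(r)$, and by (ii) the closure $L$ of the radius-$\Phi_+(r)$ ball of $G'$ is a compact set containing all such $f(g)^{-1}f(g')$. Conversely, given a coarse map $f$, I would define $\Phi_+(t)=\sup\{\,d'(f(g),f(g')):d(g,g')\le t\,\}$: monotonicity is automatic, and finiteness at each $t$ holds because $\{\ell_G\le t\}$ has compact closure $K$ by (ii), whose image is trapped in a compact $L$ by the coarse condition, hence bounded by (i) applied in $G'$. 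The same dictionary handles the other notions almost verbatim. For coarse expansiveness I would first rewrite the group condition contrapositively as ``$f(g)^{-1}f(g')\in L\Rightarrow g^{-1}g'\in K$'' and set $\Phi_-(t)=\inf\{\,d'(f(g),f(g')):d(g,g')\ge t\,\}$; the inequality $d'(f(x),f(x'))\ge\Phi_-(d(x,x'))$ then holds by definition, and divergence $\Phi_-(t)\to\infty$ follows from (i) and (ii) exactly as above. For essential surjectivity, the equality $G'=f(G)L$ with $L$ bounded and the uniform bound $\sup_y d'(y,f(G))<\infty$ are interchangeable by pulling back a near-optimal $g$ and using that the resulting ball has compact closure. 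For closeness, the set $\{f(g)^{-1}f'(g):g\in G\}$ has compact closure if and only if $\sup_g\ell_{G'}(f(g)^{-1}f'(g))=\sup_g d'(f(g),f'(g))$ is finite, again by (i) and (ii). Finally, a coarse equivalence is ``coarse $+$ coarsely expansive $+$ essentially surjective'' on both sides (invoking Proposition \ref{GroupsMapsCoarseIso} for the group side), so it follows from the three component equivalences.

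The main obstacle, and the only genuinely non-mechanical step, is manufacturing the nondecreasing moduli $\Phi_+$ and $\Phi_-$ out of the ``for every compact set there exists a compact set'' statements. Defining them as a supremum (respectively infimum) over balls makes monotonicity free, so the real content is the finiteness of $\Phi_+$ at each radius and the divergence of $\Phi_-$ at infinity; both collapse to the interplay of properness and local boundedness recorded in (i) and (ii). Everything else is the substitution $d(g,g')=\ell_G(g^{-1}g')$ combined with this compact--bounded dictionary.
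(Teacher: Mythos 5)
The paper gives no proof of this proposition at all --- it is explicitly left as an exercise for the reader (with a pointer to \cite{CH--16}) --- and your argument is correct and is exactly the intended routine verification: reduce two-point conditions to the length function $\ell_G(x)=d(1,x)$ via left-invariance, and translate between compact and bounded sets using local boundedness in one direction and properness in the other, with the sup/inf construction of $\Phi_+$ and $\Phi_-$ over balls supplying the monotone moduli. The only cosmetic point is that your $\Phi_-(t)$ may equal $+\infty$ when no pair satisfies $d(g,g')\ge t$, which the paper's footnote to Definition \ref{defqi} explicitly permits (or one can truncate), so nothing is missing.
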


\noindent
\textbf{More on proofs.}
We leave the proof of 
Proposition \ref{CompCoarseEtCoaremetriquePourG}
as an exercise for the reader.
For Theorem \ref{ExistAdapted} and Proposition \ref{TwoAdaptedOnG}, 
see Proposition 4.A.2 and Corollary 4.A.6 in \cite{CH--16}.
 
\section{Geodesic metric notions and compactly generated groups}
\label{geno}

\subsection{Coarse connectedness and geodesic notions}

We turn back to the metric setting,
and we provide, in the coarse setting, a characterization
of compactly generated locally compact groups 
among $\sigma$-compact locally compact groups.

\begin{defn}
\label{d_cg}
A pseudo-metric space $X$ is {\bf coarsely connected} 
if there exist $c>0$ such that the equivalence relation 
generated by ``being at distance at most $c$" identifies all points in $X$. 
That is, for any two points $x,y \in X$, 
there exist $n$ and $x=x_0,x_1,\dots,x_n=y$ in $X$ 
with $\sup_{1 \le i \le n}d(x_{i-1},x_{i}) \le c$.

The pseudo-metric space $X$ is {\bf coarsely geodesic} 
if there exists a nondecreasing function $\Phi:\R_+\to\R$ and $c>0$ such that, 
for any two points $x,y \in X$, 
there exist $n \le \Phi(d(x,y))$ and $x=x_0,x_1,\dots,x_n=y$ in $X$ 
with $\sup_{1 \le i \le n}d(x_{i-1},x_{i}) \le c$. 
It is {\bf large-scale geodesic} if the above $(\Phi,c)$ can be chosen with $\Phi$ an affine function.
\end{defn}
 
A basic observation is that being coarsely connected or coarsely geodesic 
are coarse invariants.
For instance, if there is a coarse equivalence 
between a coarsely geodesic pseudo-metric space and another pseudo-metric space, 
then the latter is coarsely geodesic as well.
 
Similarly, being large-scale geodesic is a quasi-isometry invariant. 
However, it is not a coarse invariant: 
if $(X,d)$ is an unbounded large-scale geodesic metric space, 
then it is coarsely equivalent to $(X,\sqrt{d})$, but the latter is not large-scale geodesic. 
It can actually be checked that a pseudo-metric space is coarsely geodesic 
if and only if it is coarsely equivalent to a large-scale geodesic metric space.

We saw in \S\ref{mec} examples of coarsely Lipschitz maps that are not large-scale Lipschitz. 
Nonetheless, we have the following useful proposition.

\begin{thm}
\label{LargescalegeoPlusCoarselyLip}
Let $f: X \to Y$ be a map between pseudo-metric spaces. 
Assume that $X$ is large-scale geodesic and that $f$ is a coarsely Lipschitz map. 

Then $f$ is large-scale Lipschitz.
\end{thm}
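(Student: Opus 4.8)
The plan is to exploit large-scale geodesicity to reduce control of $f$ to its behaviour on pairs of points at distance at most $c$, where the coarsely Lipschitz estimate becomes a single constant. Fix a nondecreasing function $\Phi_+ : \R_+ \to \R_+$ witnessing that $f$ is coarsely Lipschitz, and a pair $(\Phi, c)$ witnessing that $X$ is large-scale geodesic, where $\Phi(t) = at + b$ is affine. Since $\Phi$ is nondecreasing we have $a \ge 0$, and since a chain must exist already when $d(x,y)=0$ (take $n=0$) we have $\Phi(0)=b\ge 0$; so $a,b \ge 0$.

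First I would fix two arbitrary points $x, y \in X$ and invoke large-scale geodesicity to produce an integer $n \le \Phi(d(x,y))$ together with a chain $x = x_0, x_1, \dots, x_n = y$ such that $d(x_{i-1}, x_i) \le c$ for every $i$. The decisive feature of this chain is that applying the coarsely Lipschitz bound edge by edge only ever evaluates $\Phi_+$ at arguments $\le c$; as $\Phi_+$ is nondecreasing, each edge contributes at most $\Phi_+(c)$.

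Next I would telescope using the triangle inequality in $Y$:
\[
d(f(x), f(y)) \;\le\; \sum_{i=1}^n d(f(x_{i-1}), f(x_i)) \;\le\; \sum_{i=1}^n \Phi_+(d(x_{i-1}, x_i)) \;\le\; n\,\Phi_+(c).
\]
Since $n \le \Phi(d(x,y))$ and $\Phi_+(c) \ge 0$, multiplying the first inequality by $\Phi_+(c)$ yields $n\,\Phi_+(c) \le \Phi(d(x,y))\,\Phi_+(c)$, whence
\[
d(f(x), f(y)) \;\le\; \Phi_+(c)\,\Phi(d(x,y)) \;=\; \bigl(a\,\Phi_+(c)\bigr)\, d(x,y) + b\,\Phi_+(c).
\]
As $x,y$ were arbitrary, the affine function $\Psi(t) = \bigl(a\,\Phi_+(c)\bigr)\,t + b\,\Phi_+(c)$, which has nonnegative coefficients and hence maps $\R_+$ into $\R_+$, exhibits $f$ as $\Psi$-coarse with $\Psi$ affine, i.e.\ large-scale Lipschitz.

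There is no serious obstacle here; the entire content is the conceptual step of the first two paragraphs, namely that a geodesic chain confines every evaluation of the (possibly very fast-growing) function $\Phi_+$ to the fixed bounded argument $c$, collapsing it to the constant $\Phi_+(c)$, while large-scale geodesicity supplies the affine control on the number $n$ of edges. The only thing to double-check is the bookkeeping in degenerate cases: when $d(x,y)=0$ one takes $n=0$ and the bound holds trivially, and the sign conventions on $\Phi$ cause no trouble because the final inequality uses nothing beyond $n \le \Phi(d(x,y))$ together with $\Phi_+(c) \ge 0$.
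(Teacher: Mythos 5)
Your proof is correct, and since the paper itself gives no argument for this theorem (it defers to Proposition 3.B.9 of \cite{CH--16}), the comparison is with that reference, whose proof is exactly your chain-telescoping argument: use large-scale geodesicity to connect $x$ to $y$ by at most $\Phi(d(x,y))$ steps of length $\le c$, so that $\Phi_+$ is only ever evaluated at arguments $\le c$ and collapses to the constant $\Phi_+(c)$. Your bookkeeping on the degenerate cases and on the nonnegativity of the affine coefficients is also sound, so there is nothing to fix.
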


\begin{cor}
\label{LargescalegeoCoarse=QI}
Let $f : X \to Y$ be a coarse equivalence between  large-scale geodesic pseudo-metric spaces.  
Then $f$ is a quasi-isometry.
\end{cor}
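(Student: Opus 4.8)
The plan is to deduce Corollary~\ref{LargescalegeoCoarse=QI} directly from Theorem~\ref{LargescalegeoPlusCoarselyLip} together with the definitions in \S\ref{mec}. Recall that a coarse equivalence is, by Definition~\ref{defqi}, a map that is coarse (coarsely Lipschitz), coarsely expansive, and essentially surjective; a quasi-isometry is a map that is large-scale Lipschitz, large-scale expansive, and essentially surjective. Since essential surjectivity is already shared by both notions and requires no change, the work amounts to upgrading ``coarse'' to ``large-scale Lipschitz'' and ``coarsely expansive'' to ``large-scale expansive''.

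First I would upgrade the coarsely Lipschitz condition. Since $X$ is large-scale geodesic (hence in particular coarsely geodesic) and $f$ is coarsely Lipschitz, Theorem~\ref{LargescalegeoPlusCoarselyLip} applies verbatim and tells us that $f$ is large-scale Lipschitz. This disposes of the first of the two conditions with no further calculation.

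The second and more delicate step is to upgrade coarse expansiveness to large-scale expansiveness. The natural strategy is to pass to a coarse inverse: since $f$ is a coarse equivalence, Proposition~\ref{CategoricalCoarseQI} gives a coarsely Lipschitz map $g:Y\to X$ with $g\circ f\sim \mathrm{id}_X$ and $f\circ g\sim \mathrm{id}_Y$. The key algebraic observation is that a lower affine bound on $d(f(x),f(x'))$ is equivalent to an upper affine bound for $g$ on the relevant pairs; more precisely, if $g$ is $\Phi_+$-coarse with $\Phi_+$ affine, then for all $x,x'$ we have $d(x,x')\le d(g(f(x)),g(f(x')))+2C\le \Phi_+(d(f(x),f(x')))+2C$ for the closeness constant $C$, which rearranges into an affine lower bound $d(f(x),f(x'))\ge \Phi_-(d(x,x'))$ with $\Phi_-$ affine. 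So it suffices to know that $g$ is itself large-scale Lipschitz. But $Y$ is also large-scale geodesic, and $g$ is coarsely Lipschitz, so Theorem~\ref{LargescalegeoPlusCoarselyLip} applies to $g$ as well and yields that $g$ is large-scale Lipschitz. Combining $f$ large-scale Lipschitz, $f$ large-scale expansive (just derived), and $f$ essentially surjective gives exactly that $f$ is a quasi-isometry.

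The main obstacle I anticipate is the bookkeeping around closeness constants and the elementary manipulation of affine functions under composition: one must verify that replacing $\Phi_+(t)=at+b$ by its effect through $g\circ f$ still yields a genuine nondecreasing affine $\Phi_-$ tending to infinity, and in particular that the additive constants coming from closeness do not destroy affinity or the growth at infinity. This is routine but is where sign errors and off-by-a-constant slips tend to hide; once the inequality $d(x,x')\le \Phi_+(d(f(x),f(x')))+2C$ is set up correctly, inverting the affine function to extract $\Phi_-$ is immediate.
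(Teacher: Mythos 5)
Your proposal is correct and follows exactly the route the paper intends: the survey derives Corollary~\ref{LargescalegeoCoarse=QI} from Theorem~\ref{LargescalegeoPlusCoarselyLip} (citing Proposition 3.B.9 of \cite{CH--16}), and the standard argument is precisely yours --- apply the theorem to $f$ and to a coarse inverse $g$ furnished by Proposition~\ref{CategoricalCoarseQI}, then convert the affine upper bound for $g$ into an affine lower bound for $f$ via the closeness constant. The only residual point, which you already flag, is the harmless truncation needed so that the resulting $\Phi_-$ maps $\R_+$ to $\R_+$.
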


\subsection{Compactly generated locally compact groups}

By definition, a locally compact group is compactly generated if it is generated, as a group, by a compact subset. For instance, for a discrete group it means being finitely generated.

\begin{thm}
\label{CaractCg}
Let $G$ be a $\sigma$-compact locally compact group 
and $d$ an adapted pseudo-metric on $G$. 
\par
Then $G$ is compactly generated if and only if $(G,d)$ is coarsely geodesic, 
if and only if $(G,d)$ is coarsely connected.
\par
Moreover, when this holds, there exists and adapted pseudo-metric $d'$ on $G$
such that $(G, d')$ is large-scale geodesic.
\end{thm}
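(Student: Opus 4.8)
The plan is to prove Theorem~\ref{CaractCg} by establishing a cycle of implications: compact generation $\Rightarrow$ coarsely geodesic $\Rightarrow$ coarsely connected $\Rightarrow$ compact generation, and then to handle the final ``moreover'' clause by exhibiting a specific adapted pseudo-metric.

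First I would prove that compact generation implies $(G,d)$ is coarsely geodesic. Suppose $G = \langle S \rangle$ for a compact symmetric subset $S$ with nonempty interior (one may enlarge $S$ to ensure these properties without changing the generated group). Using the word-length function $\ell_S$, every $g \in G$ is a product of $\ell_S(g)$ elements of $S$, which yields a discrete path $1 = x_0, x_1, \dots, x_n = g$ with consecutive ratios in $S$; by left-invariance this gives a path between any two points. Since $S$ is compact and $d$ is locally bounded, $\sup_{s \in S} d(1,s) =: c < \infty$, so consecutive points along such a path satisfy $d(x_{i-1}, x_i) \le c$. The remaining task is to bound the number of steps $n = \ell_S(g)$ by a function $\Phi(d(1,g))$. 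This is where the main obstacle lies: one must show that $d$-balls are covered by a controlled number of $S$-word-length balls, i.e.\ that $\ell_S$ is itself coarsely Lipschitz as a function of $d$. The key input is properness of $d$ together with the fact that $S^{n}$ exhausts $G$; concretely, since $d$ is adapted, each $d$-ball $B_d(1,r)$ has compact closure, hence is contained in $S^{N(r)}$ for some $N(r)$, and setting $\Phi(r) = N(r)$ delivers the required nondecreasing bound. Verifying that $N(r) < \infty$ for each $r$ and that this yields coarse geodesicity is the heart of the argument.

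The implication coarsely geodesic $\Rightarrow$ coarsely connected is immediate from the definitions, since the former is a strengthening of the latter. For coarsely connected $\Rightarrow$ compactly generated, I would take the constant $c > 0$ witnessing coarse connectedness and set $S = \overline{B_d(1,c)}$; by properness this is compact, and by left-invariance the coarse-connectedness chains translate into expressions of arbitrary group elements as products of elements of $S$, so $S$ generates $G$. This closes the cycle of equivalences.

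For the final assertion, I would invoke the construction already used in the proof of Theorem~\ref{ExistAdapted}, but built from a single compact generating set rather than an exhausting sequence: take the metric graph on $G$ with an edge $(g,gs)$ of length $1$ for each $(g,s) \in G \times S$, where $S$ is a compact symmetric generating set with nonempty interior. The resulting word metric $d'$ is left-invariant, is locally bounded and proper (properness follows because $n$-balls lie in $S^n$, which is compact), and hence adapted; by construction the graph-geodesics give chains realizing $d'(x,y)$ in exactly $d'(x,y)$ steps of size $1$, so $(G,d')$ is large-scale geodesic with $\Phi(t) = t$ and $c = 1$. I expect the only subtlety here to be checking that $d'$ is genuinely adapted --- in particular that compact subsets are $d'$-bounded, which uses that $S$ has nonempty interior so that any compact set is covered by finitely many translates of $S$.
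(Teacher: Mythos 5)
Your proposal is correct, and its skeleton agrees with the paper's at two of three points: the implication coarsely connected $\Rightarrow$ compactly generated via the closed $c$-ball is exactly the paper's argument, and the ``moreover'' clause via the word metric of a compact generating set is what the paper's sketch implicitly uses (the paper only asserts $(G,d')$ is coarsely geodesic, but as you note the word metric is in fact large-scale geodesic with $\Phi(t)=t$, $c=1$). The genuine difference is in the remaining implication. The paper proves that the word metric $d'$ is adapted and coarsely geodesic, and then \emph{transfers} coarse geodesicity to the given $d$ by citing Proposition \ref{TwoAdaptedOnG} (any two adapted pseudo-metrics on $G$ are coarsely equivalent via the identity) together with the coarse invariance of coarse geodesicity. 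You instead work directly with $d$: after enlarging $S$ to be symmetric with nonempty interior, you bound the word length by $\ell_S(g) \le N(d(1,g))$, where $N(r)$ is finite because $\overline{B_d(1,r)}$ is compact (properness of $d$) and every compact set lies in some $S^N$ (the standard covering argument using the interior of $S$, or Baire category); local boundedness of $d$ gives the uniform step size $c = \sup_{s\in S} d(1,s)$. This inline estimate is precisely the nontrivial half of the coarse equivalence $(G,d)\to(G,d_S)$, so in effect you re-derive the special case of Proposition \ref{TwoAdaptedOnG} that the paper quotes. What your route buys is self-containedness and an explicit comparison function $\Phi = N$; what the paper's route buys is brevity and the conceptual point that coarse geodesicity is a coarse invariant, so it need only be checked for one convenient adapted pseudo-metric. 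Both are sound; your identification of the finiteness of $N(r)$ as ``the heart of the argument'' is accurate, and your precaution of giving $S$ nonempty interior is exactly what makes that step (and the local boundedness of $d'$ in the last part) go through.
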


\begin{proof}[Sketch of proof]
If $(G,d)$ is coarsely connected and $c$ is the constant given in Definition \ref{d_cg}, 
then a simple verification shows that the $c$-ball centred at $1$ in $G$ 
has a compact closure, and generates $G$.

Conversely, if $G$ is compactly generated, 
then the word metric $d'$ with respect to a given compact generating subset 
is adapted and $(G,d')$ is coarsely geodesic. 
Since it is coarsely equivalent to $(G,d)$, by Proposition \ref{TwoAdaptedOnG}, 
and since being coarsely geodesic is a coarse invariant, 
we deduce that $(G,d)$ is coarsely geodesic as well.
\end{proof}
 
Combining this with Proposition \ref{GroupsHomosCoarseIso}, 
we obtain a geometric proof of the following corollary.

\begin{cor}
\label{CgOnSub}
Let $f : G \to H$ be a continuous proper homomorphism with cocompact image 
between locally compact groups. 

Then $G$ is compactly generated if and only if $H$ is compactly generated. 
\end{cor}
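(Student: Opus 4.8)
The plan is to read the statement entirely through the coarse-geometric dictionary assembled above, so that compact generation becomes the purely metric condition of coarse connectedness and is visibly preserved by coarse equivalence. The first and decisive observation is that the hypotheses on $f$ are exactly those appearing in the last clause of Proposition \ref{GroupsHomosCoarseIso}: a continuous homomorphism is always a coarse map, it is coarsely expansive precisely when it is proper, and it is essentially surjective precisely when its image is cocompact. Since $f$ is assumed proper with cocompact image, $f$ therefore induces an isomorphism in the coarse category of locally compact groups; in other words $G$ and $H$ are coarsely equivalent, and this holds without any a priori assumption of compact generation on either side.

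Next I would dispose of $\sigma$-compactness, which is needed before the metric characterization of Theorem \ref{CaractCg} can be invoked. Here one uses that compact generation forces $\sigma$-compactness: if $S$ is a compact symmetric generating set containing $1$, then $G=\bigcup_n S^n$ with each $S^n$ compact. So, assuming (say) that $G$ is compactly generated, $G$ is $\sigma$-compact, and Proposition \ref{SigmacomopactAndCE} applied to the coarse equivalence $f$ shows that $H$ is $\sigma$-compact as well. The roles of $G$ and $H$ are symmetric throughout, so it suffices to prove one implication.

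Now both groups are $\sigma$-compact, hence carry adapted pseudo-metrics $d_G,d_H$ by Theorem \ref{ExistAdapted}. By Proposition \ref{CompCoarseEtCoaremetriquePourG} the group-level coarse equivalence $f$ is the same thing as a coarse equivalence of pseudo-metric spaces $(G,d_G)\to(H,d_H)$. Since $G$ is compactly generated, Theorem \ref{CaractCg} says $(G,d_G)$ is coarsely connected; coarse connectedness is a coarse invariant, so $(H,d_H)$ is coarsely connected too, whence $H$ is compactly generated by the converse direction of Theorem \ref{CaractCg}. Running the identical argument with $G$ and $H$ interchanged gives the reverse implication.

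I do not expect a genuine obstacle, as the content is the correct sequencing of the earlier results; the one point that requires care is the bookkeeping around $\sigma$-compactness. Theorem \ref{CaractCg} is only available for $\sigma$-compact groups, so one cannot directly transport coarse connectedness along $f$ until $\sigma$-compactness of both groups has been secured --- and that in turn must be extracted from the compact generation hypothesis on the single group one starts with, together with Proposition \ref{SigmacomopactAndCE}, rather than assumed outright.
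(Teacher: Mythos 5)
Your proposal is correct and follows exactly the route the paper intends: the paper derives Corollary \ref{CgOnSub} by ``combining'' Theorem \ref{CaractCg} with Proposition \ref{GroupsHomosCoarseIso}, which is precisely your argument. Your careful bookkeeping of $\sigma$-compactness (via compact generation and Proposition \ref{SigmacomopactAndCE}) simply makes explicit a step the paper leaves implicit, and it is handled correctly.
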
 
 
Also, with Corollary \ref{LargescalegeoCoarse=QI} we obtain

\begin{cor}
\label{CE=QIoncg}
Between compactly generated locally compact groups,
every coarse equivalence is a quasi-isometry.
\par

In particular, the classification of $\sigma$-compact locally compact groups up to coarse equivalence 
extends the classification of compactly generated locally compact groups up to quasi-isometry.
\end{cor}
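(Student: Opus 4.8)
The plan is to transport the statement into the setting of pseudo-metric spaces and then invoke Corollary \ref{LargescalegeoCoarse=QI}. First I would record that a compactly generated locally compact group is automatically $\sigma$-compact: if $S$ is a compact generating subset, then $G=\bigcup_{n\ge 1}(S\cup S^{-1}\cup\{1\})^{n}$ is a countable union of compact subsets. Hence both groups carry adapted pseudo-metrics by Theorem \ref{ExistAdapted}, and by the last assertion of Theorem \ref{CaractCg} these may be chosen large-scale geodesic; fix such choices $d_G$ on $G$ and $d_H$ on $H$.

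Next, given a coarse equivalence $f:G\to H$ in the coarse category of locally compact groups, Proposition \ref{CompCoarseEtCoaremetriquePourG} lets me reinterpret it as a coarse equivalence of pseudo-metric spaces $(G,d_G)\to(H,d_H)$: the coarse, coarsely expansive and essentially surjective conditions of Definitions \ref{defcmcc}--\ref{defesce} match the metric conditions of Definition \ref{defqi}. Since $(G,d_G)$ and $(H,d_H)$ are large-scale geodesic, Corollary \ref{LargescalegeoCoarse=QI} applies directly and shows that $f$ is a quasi-isometry, which is the first assertion.

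For the ``in particular'' clause I would observe that on compactly generated groups the relations of coarse equivalence and of quasi-isometry coincide. One inclusion is formal: any quasi-isometry is large-scale Lipschitz, large-scale expansive and essentially surjective, hence a fortiori coarse, coarsely expansive and essentially surjective, i.e.\ a coarse equivalence. The reverse inclusion is exactly the first assertion just proved. Consequently, restricting the coarse-equivalence classification of $\sigma$-compact locally compact groups to the compactly generated ones recovers their quasi-isometry classification, which is the precise meaning of ``extends.''

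The one point deserving care, which I would address to make the statement well posed, is that the quasi-isometry type must not depend on the chosen large-scale geodesic adapted pseudo-metric. This follows by the same circle of ideas: by Proposition \ref{TwoAdaptedOnG} the identity between two adapted pseudo-metrics on $G$ is a coarse equivalence, and when both are large-scale geodesic Corollary \ref{LargescalegeoCoarse=QI} upgrades it to a quasi-isometry, so all such metrics are mutually quasi-isometric. I expect the only genuine subtlety to be this well-definedness, together with the faithful translation supplied by Proposition \ref{CompCoarseEtCoaremetriquePourG} between the group-theoretic and the metric formulations; everything else is a direct concatenation of the cited results.
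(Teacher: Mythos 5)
Your proof is correct and follows essentially the same route as the paper: the authors obtain the corollary by combining Theorem \ref{CaractCg} (choose large-scale geodesic adapted pseudo-metrics, which exist since compactly generated groups are $\sigma$-compact) with Corollary \ref{LargescalegeoCoarse=QI}, exactly as you do. Your additional remarks on the translation via Proposition \ref{CompCoarseEtCoaremetriquePourG} and on well-definedness of the quasi-isometry type (in effect Proposition \ref{anyad2}) just make explicit what the paper leaves as ``straightforward.''
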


\begin{defn}
\label{gapd}
A pseudo-metric $d$ on a compactly generated locally compact group $G$ 
is {\bf geodesically adapted} if it is equivalent to the word length $d'$ 
with respect to some/any compact generating subset, 
in the sense that the identity map $(G,d)\to (G,d')$ is a quasi-isometry.
\end{defn}
 
Analogously with Proposition \ref{TwoAdaptedOnG}, we have:

\begin{prop}
\label{anyad2}
Let $G$ be a compactly generated locally compact group. 
For any two geodesically adapted pseudo-metrics $d,d'$ on $G$, 
the identity map of pseudo-metric spaces $(G,d)\to (G,d')$ is a quasi-isometry.
\end{prop}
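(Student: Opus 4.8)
The plan is to reduce the comparison of $d$ and $d'$ to a single fixed word metric and then exploit the fact that, in the large-scale geodesic world, coarse equivalences are automatically quasi-isometries. First I would fix a compact generating subset $S$ of $G$ and let $d_S$ denote the associated word metric, $d_S(g,h)=\ell_S(g^{-1}h)$. Exactly as in the discrete case, $d_S$ is an adapted pseudo-metric: left-invariance is immediate, while local boundedness and properness follow from compact generation, since the powers $S^n$ are compact and exhaust $G$. The key extra feature I would record is that $d_S$ is large-scale geodesic, indeed geodesic with $\Phi=\mathrm{id}$ and $c=1$, because any two points are joined by a chain of unit steps whose length equals their distance.

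Before the main argument I would dispatch the ``some/any'' clause of Definition~\ref{gapd}. For any two compact generating subsets $S,S'$, both $d_S$ and $d_{S'}$ are adapted, so by Proposition~\ref{TwoAdaptedOnG} the set-theoretic identity $(G,d_S)\to(G,d_{S'})$ is a coarse equivalence; since both sources are large-scale geodesic, Corollary~\ref{LargescalegeoCoarse=QI} upgrades it to a quasi-isometry. Hence being quasi-isometric (via the identity) to the word metric of one compact generating subset is the same as being quasi-isometric to that of any other, which legitimizes Definition~\ref{gapd} and lets me work with a single fixed $d_S$ from now on.

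For the core of the proof I would argue as follows. By the definition of geodesically adapted, both identity maps $(G,d)\to(G,d_S)$ and $(G,d')\to(G,d_S)$ are quasi-isometries. By the categorical characterization of Proposition~\ref{CategoricalCoarseQI}(2), quasi-isometries are exactly the isomorphisms of the large-scale category, so the inverse of $(G,d')\to(G,d_S)$, namely the identity $(G,d_S)\to(G,d')$, is again a quasi-isometry. The set-theoretic identity $(G,d)\to(G,d')$ then factors as the composition $(G,d)\to(G,d_S)\to(G,d')$ of two quasi-isometries, and composites of isomorphisms are isomorphisms; therefore the identity $(G,d)\to(G,d')$ is a quasi-isometry, as claimed.

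As for the main obstacle: there is no deep one. The statement is a formal consequence of Definition~\ref{gapd}, Proposition~\ref{TwoAdaptedOnG}, Corollary~\ref{LargescalegeoCoarse=QI}, and the categorical viewpoint of Proposition~\ref{CategoricalCoarseQI}. The only genuine content to check by hand is that the word metric $d_S$ is large-scale geodesic, so that Corollary~\ref{LargescalegeoCoarse=QI} is applicable, together with the routine fact that composites and inverses of quasi-isometries remain quasi-isometries once the large-scale category is in place. Conceptually, the single idea worth highlighting is that routing the comparison through a large-scale geodesic model $d_S$ is precisely what promotes the coarse-equivalence statement of Proposition~\ref{TwoAdaptedOnG} to the sharper quasi-isometry statement sought here.
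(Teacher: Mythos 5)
Your proposal is correct and follows essentially the route the paper intends (it declares the proposition straightforward from Definition~\ref{gapd}, Proposition~\ref{TwoAdaptedOnG}, Theorem~\ref{CaractCg}, and Corollary~\ref{LargescalegeoCoarse=QI}, deferring details to Corollary 4.B.11 of \cite{CH--16}): fix one word metric $d_S$, note it is adapted and large-scale geodesic, justify the ``some/any'' clause by upgrading the coarse equivalence of Proposition~\ref{TwoAdaptedOnG} to a quasi-isometry between the two large-scale geodesic word metrics, and conclude by composing and inverting isomorphisms in the large-scale category via Proposition~\ref{CategoricalCoarseQI}(2). The only gloss worth flagging is that local boundedness of $d_S$ (compact subsets are bounded) does not follow merely from ``the powers $S^n$ are compact and exhaust $G$'' but requires a Baire category argument; this is a standard fact that the paper likewise asserts without proof in the sketch of Theorem~\ref{CaractCg}.
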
 
 
\begin{exe}
If $G$ is a connected Lie group, 
we have two natural families of geodesically adapted pseudo-metrics:
\begin{itemize}
\item 
the metrics associated to left-invariant Riemannian metrics on $G$;
\item 
the word metrics associated to compact generating subsets of $G$ 
(observe that, by connectedness, 
any compact subset with non-empty interior generates $G$).
\end{itemize} 
Then the identity map of $G$ for any two of these metrics is a quasi-isometry, by Proposition \ref{anyad2}.
\end{exe}

\noindent
\textbf{More on proofs.}
For Theorem \ref{LargescalegeoPlusCoarselyLip} 
and Corollary \ref{LargescalegeoCoarse=QI}, see Proposition 3.B.9 in \cite{CH--16}.
For the characterizations of Theorem \ref{CaractCg}, and others, see Proposition 4.B.8 in \cite{CH--16}.
Corollaries \ref{CgOnSub} and \ref{CE=QIoncg} are then straightforward,
as well as Proposition \ref{anyad2}, which is Corollary 4.B.11 in \cite{CH--16}.

\section{Coarsely ultrametric spaces and locally elliptic locally compact groups}
\label{selli}

\begin{defn} 
A pseudo-metric space is {\bf coarsely ultrametric} if, for every $r\ge 0$, 
the equivalence relation generated by the relation ``being at distance at most $r$" 
has orbits of bounded diameter.
\end{defn}

This is a coarse invariant. Indeed, a simple verification shows the following:

\begin{prop}
\label{cultrametricspace}
A pseudo-metric space is coarsely ultrametric 
if and only if it is coarsely equivalent to an ultrametric space.
\end{prop}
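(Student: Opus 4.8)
The plan is to prove the two implications separately, putting essentially all the work into the ``only if'' part; the ``if'' part is just the direct verification that coarse ultrametricity is a coarse invariant. Throughout, for a pseudo-metric space and $r\ge 0$ I write $\sim_r$ for the equivalence relation generated by ``being at distance $\le r$'', and I note that $\sim_r\subseteq\sim_{r'}$ whenever $r\le r'$. For the ``if'' direction, suppose $(X,d)$ is coarsely equivalent to an ultrametric space $(Y,d')$, and fix a coarse equivalence $g:X\to Y$ with coarse control $\Phi_+$ (so $d'(g(x),g(x'))\le\Phi_+(d(x,x'))$) and coarsely expansive control $\Phi_-$, nondecreasing and tending to $+\infty$, with $d'(g(x),g(x'))\ge\Phi_-(d(x,x'))$. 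Fix $r\ge0$ and suppose $x\sim_r x'$ in $X$, witnessed by a chain $x=x_0,\dots,x_n=x'$ with consecutive $d$-distances $\le r$. Then $d'(g(x_{i-1}),g(x_i))\le\Phi_+(r)$ for each $i$, and since $d'$ is an ultrametric the strong triangle inequality gives $d'(g(x),g(x'))\le\max_i d'(g(x_{i-1}),g(x_i))\le\Phi_+(r)$. Hence $\Phi_-(d(x,x'))\le\Phi_+(r)$, so $d(x,x')$ is bounded by a constant depending only on $r$ (because $\Phi_-\to+\infty$). Thus every $\sim_r$-orbit of $X$ has bounded diameter, i.e.\ $X$ is coarsely ultrametric.

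For the ``only if'' direction, assume $X$ is coarsely ultrametric and let $D(r)<\infty$ be a uniform bound for the diameters of the $\sim_r$-orbits; then $D$ is nondecreasing. The idea is to read off an ultrametric directly from the relations $\sim_r$, by setting
\[\rho(x,y)=\inf\{\,r\ge0 : x\sim_r y\,\}.\]
Since $x\sim_{d(x,y)}y$, this is finite and satisfies $\rho\le d$. First I would check that $\rho$ is a pseudo-ultrametric: symmetry and $\rho(x,x)=0$ are immediate, and for the strong triangle inequality, if $r>\max(\rho(x,y),\rho(y,z))$ then $x\sim_r y$ and $y\sim_r z$ by monotonicity, hence $x\sim_r z$ by transitivity of $\sim_r$, so that $\rho(x,z)\le r$; letting $r$ decrease to $\max(\rho(x,y),\rho(y,z))$ gives $\rho(x,z)\le\max(\rho(x,y),\rho(y,z))$.

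It then remains to verify that the identity $\iota:(X,d)\to(X,\rho)$ is a coarse equivalence. It is surjective, hence essentially surjective, and $\rho\le d$ makes it $1$-Lipschitz, hence coarse. The one substantive point is coarse expansiveness, for which it suffices to bound $d$ in terms of $\rho$: if $\rho(x,y)\le r$ then $\rho(x,y)<r+1$, so $x\sim_s y$ for some $s<r+1$ and therefore $x\sim_{r+1}y$, whence $x$ and $y$ lie in a common $\sim_{r+1}$-orbit and $d(x,y)\le D(r+1)$. Since $D(r+1)<\infty$ for every $r$, this is exactly the (elementarily equivalent) reformulation of coarse expansiveness and produces a nondecreasing $\Phi_-\to+\infty$ with $\rho\ge\Phi_-\circ d$. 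Thus $\iota$ is a coarse equivalence onto the pseudo-ultrametric space $(X,\rho)$; composing with the projection onto the Hausdorffization of $(X,\rho)$, which is a quasi-isometry and hence a coarse equivalence (Example in \S\ref{mec}), exhibits $X$ as coarsely equivalent to a genuine ultrametric space.

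The only real obstacle is the coarse expansiveness of $\iota$: this is the single place where the hypothesis enters essentially, through the uniform finiteness of the orbit-diameter function $D$. Everything else --- the pseudo-ultrametric axioms for $\rho$, the coarseness of $\iota$, and the passage to the Hausdorffization --- should be routine.
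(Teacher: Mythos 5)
Your proof is correct and takes essentially the approach the paper points to (the ``simple verification'' of Proposition 3.B.16 in \cite{CH--16}): the ``if'' direction is the coarse-invariance check via chains and the strong triangle inequality, and the ``only if'' direction constructs the canonical pseudo-ultrametric $\rho(x,y)=\inf\{r\ge 0 : x\sim_r y\}$ from the generated equivalence relations and verifies that the identity is a coarse equivalence. Your extra care with the unattained infimum (using $r+1$ and $D(r+1)$) and the passage to the Hausdorffization to get a genuine ultrametric are both sound.
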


Note that an immediate consequence of the definition is that,
if a pseudo-metric space is both coarsely ultrametric and coarsely geodesic, then it is bounded. 
More generally, every coarsely Lipschitz map 
from a coarsely geodesic pseudo-metric space to a coarsely ultrametric pseudo-metric space 
has a bounded image.

\begin{defn}
A locally compact group is {\bf locally elliptic} if every compact subset is contained in a compact subgroup. 
\end{defn} 
 
Note that such a locally compact group has a compact identity component. Discrete locally elliptic locally compact groups are better known as {\em locally finite groups}.
 
\begin{prop}
\label{cu=leforgroups}
If $G$ is a $\sigma$-compact locally compact group and $d$ an adapted pseudo-metric, then $G$ is locally elliptic if and only if $(G,d)$ is coarsely ultrametric. 

Among $\sigma$-compact locally compact groups, 
the class of locally elliptic groups is closed under coarse equivalence.
\end{prop}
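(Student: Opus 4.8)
The plan is to establish the stated equivalence first, and then to deduce the closure under coarse equivalence as a formal consequence of it. The whole argument rests on a single dictionary between the algebraic side (local ellipticity) and the metric side (coarse ultrametricity), which becomes available because $d$ is left-invariant.

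The first thing I would isolate is the following observation. Fix $r \ge 0$ and set $B_r = \{h \in G : d(1,h) \le r\}$. Since $d$ is symmetric and left-invariant, one checks that $B_r$ is symmetric ($h \in B_r$ iff $h^{-1} \in B_r$) and contains $1$. Given a chain $1 = g_0, g_1, \dots, g_n = g$ with $d(g_{i-1},g_i) \le r$, left-invariance yields $g_{i-1}^{-1}g_i \in B_r$, whence $g = \prod_{i=1}^n (g_{i-1}^{-1}g_i) \in \langle B_r \rangle$, the subgroup generated by $B_r$; the converse inclusion is immediate. Thus the orbit of $1$ under the equivalence relation generated by ``being at distance at most $r$'' is exactly $\langle B_r \rangle$, and by left-invariance every other orbit is an isometric left translate of it, of the same diameter. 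Consequently $(G,d)$ is coarsely ultrametric if and only if $\langle B_r \rangle$ is bounded for every $r \ge 0$.

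With this in hand I expect both directions to be short, using that $d$ is adapted. If $G$ is locally elliptic, then for each $r$ the ball $B_r$ is bounded (local boundedness), so $\overline{B_r}$ is compact (properness) and hence contained in a compact subgroup $H$; then $\langle B_r \rangle \subseteq H$ is bounded, so $(G,d)$ is coarsely ultrametric. Conversely, if $(G,d)$ is coarsely ultrametric, I would take a compact $K \subseteq G$, note it is bounded and hence contained in some $B_r$, observe that $\langle B_r \rangle$ is then bounded, and use properness to conclude that $\overline{\langle B_r \rangle}$ is compact; since the closure of a subgroup in a topological group is again a subgroup, this produces a compact subgroup containing $K$, so $G$ is locally elliptic.

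For the last sentence of the proposition, suppose $G$ is $\sigma$-compact and locally elliptic and $H$ is coarsely equivalent to it. By Proposition \ref{SigmacomopactAndCE}, $H$ is $\sigma$-compact, hence carries an adapted pseudo-metric, and by Proposition \ref{CompCoarseEtCoaremetriquePourG} the coarse equivalence of groups corresponds to a coarse equivalence of the associated pseudo-metric spaces. Since coarse ultrametricity is a coarse invariant and $(G,d)$ is coarsely ultrametric by the first part, so is $H$, and the first part then shows $H$ is locally elliptic. The only genuinely substantive step is the orbit-equals-subgroup identification, together with the translation between ``bounded'' and ``relatively compact'' via the adapted-metric axioms; the remainder is bookkeeping.
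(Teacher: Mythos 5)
Your proposal is correct and follows exactly the route the paper intends: the orbit of $1$ under the relation generated by ``distance at most $r$'' is the subgroup $\langle B_r\rangle$, the adapted-metric axioms translate between bounded and relatively compact, and the closure statement is deduced from the first claim via the coarse invariance of coarse ultrametricity together with Propositions \ref{SigmacomopactAndCE} and \ref{CompCoarseEtCoaremetriquePourG} --- precisely the ``straightforward verification'' plus ``second claim follows from the first'' that the paper sketches. One trivial slip: $B_r$ is bounded by definition (its diameter is at most $2r$), not by local boundedness, which you instead need (and correctly use) to bound the compact subgroup containing it and the compact set $K$ in the converse.
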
 

The verifications of the first claim is straightforward.
In the $\sigma$-compact case, the second claim folllows from the first one.

\vskip.2cm
\noindent
\textbf{More on proofs.}
For Propositions \ref{cultrametricspace} and \ref{cu=leforgroups},
see respectively 3.B.16 and 4.D.8 in \cite{CH--16}.

\section{Coarse properness, growth, and amenability}
\label{copr}

\subsection{The metric notions} 
 
\begin{defn}
The {\bf uniform growth function} of a pseudo-metric space $(X,d)$
is the function mapping $r \ge 0$ to the supremum $b_X(r)$ 
of the cardinalities of all subsets of diameter at most $r$.

A pseudo-metric space is {\bf uniformly locally finite (ULF)} 
if the function $b_X(\cdot)$ takes finite values. 
\end{defn}
 
Among non-decreasing functions $\R_+ \to \R_+$, write $f\preceq g$ 
if there exist constants $c,c',c''>0$ such that $f(r) \le cg(c'r)+c''$ for all $r>0$. 
Say that $f$ and $g$ are {\bf asymptotically equivalent}, written $f\simeq g$, if $f\preceq g\preceq f$. 

\begin{lem}
\label{grce}
If two ULF metric spaces are quasi-isometric,
they have asymptotically equivalent growth functions.
\end{lem}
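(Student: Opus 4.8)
The plan is to establish the two asymptotic inequalities $b_X \preceq b_Y$ and $b_Y \preceq b_X$ separately; by symmetry both will follow from a single sub-claim. Fix a quasi-isometry $f : X \to Y$ and, from the affine functions witnessing that $f$ is large-scale Lipschitz and large-scale expansive, choose constants $\lambda \ge 1$ and $C \ge 0$ such that
\[
\tfrac{1}{\lambda}\, d(x,x') - C \;\le\; d(f(x),f(x')) \;\le\; \lambda\, d(x,x') + C
\qquad (x,x' \in X).
\]
The sub-claim I would isolate is: if a map $f : X \to Y$ satisfies these two bounds and $X$ is ULF, then $b_X \preceq b_Y$. Granting it, the reverse inequality comes for free: by Proposition \ref{CategoricalCoarseQI} the quasi-isometry $f$ is invertible in the large-scale category, so it admits a quasi-inverse $g : Y \to X$, which is again a quasi-isometry; applying the sub-claim to $g$ (using that $Y$ is ULF) gives $b_Y \preceq b_X$, and hence $b_X \simeq b_Y$. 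Note that the essential-surjectivity clause of the quasi-isometry is needed only to produce this quasi-inverse.

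To prove the sub-claim, fix $r \ge 0$ and let $A \subseteq X$ be any subset of diameter at most $r$. The upper bound gives $\operatorname{diam} f(A) \le \lambda r + C$, so $|f(A)| \le b_Y(\lambda r + C)$. The real content is to control how badly $f$ collapses $A$: if $a,a' \in A$ satisfy $f(a)=f(a')$ then the lower bound forces $d(a,a') \le \lambda C$, so each fibre $A \cap f^{-1}(\{y\})$ has diameter at most $\lambda C$ and therefore cardinality at most $M := b_X(\lambda C)$, which is finite precisely because $X$ is ULF. Summing over fibres,
\[
|A| \;=\; \sum_{y \in f(A)} \bigl| A \cap f^{-1}(\{y\}) \bigr| \;\le\; M\, |f(A)| \;\le\; M\, b_Y(\lambda r + C),
\]
and taking the supremum over all such $A$ yields $b_X(r) \le M\, b_Y(\lambda r + C)$.

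It then remains to repackage this into the shape $b_X(r) \le c\, b_Y(c' r) + c''$ demanded by $\preceq$. For $r \ge 1$ one has $\lambda r + C \le (\lambda + C)r$, so monotonicity of $b_Y$ gives $b_X(r) \le M\, b_Y((\lambda+C)r)$; for $0 < r \le 1$ one bounds $b_X(r) \le b_X(1)$, a finite constant absorbed into the additive term. This gives $b_X \preceq b_Y$ with $c = M$, $c' = \lambda + C$, $c'' = b_X(1)$, completing the sub-claim. The only genuine obstacle is the fibre-collapse estimate, and it is exactly here that both hypotheses are used in tandem: large-scale expansiveness bounds the \emph{diameter} of each fibre, while the ULF assumption converts that diameter bound into a uniform bound on fibre \emph{cardinality}. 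Everything else is monotonicity bookkeeping for small $r$ and the formal passage to the quasi-inverse.
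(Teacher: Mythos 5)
Your proof is correct, and since the paper itself omits the argument (deferring to Proposition 3.D.6 of \cite{CH--16}), there is nothing to contrast: your route --- bounding $\operatorname{diam} f(A)$ by $\lambda r + C$, using large-scale expansiveness together with the ULF hypothesis to bound each fibre's cardinality by $b_X(\lambda C)$, and obtaining the reverse inequality by applying the same sub-claim to a quasi-inverse furnished by Proposition \ref{CategoricalCoarseQI} --- is precisely the standard argument intended there. The bookkeeping for small $r$ (absorbing $b_X(1)$ into the additive constant and replacing $\lambda r + C$ by $(\lambda + C)r$ for $r \ge 1$) is handled correctly, so nothing needs fixing.
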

 
This allows to extend the notion of growth (up to asymptotic equivalence) to a broader setting. 
 
\begin{defn}
\label{ucpspace}
A pseudo-metric space $X$ is {\bf uniformly coarsely proper}\footnote{Other authors
use ``of bounded geometry" for ``uniformly coarsely proper''.} 
if there exist a nondecreasing function $\Psi : \R_+ \to \R_+$ and $r_0>0$ such that, 
for every $r\ge r_0$, every subset of $X$ of diameter at most $r$ 
is covered by at most $\Psi(r)$ subsets of diameter at most $r_0$.  
\end{defn} 
 
Note that being uniformly coarsely proper is a coarse invariant of pseudo-metric spaces. 
More generally, if $X \to Y$ is a coarse embedding and if $Y$ is uniformly coarsely proper, 
then so is $X$; 
in the case of an isometric embedding, 
the function $\Psi$ of Definition \ref{ucpspace} can be chosen to be the same
for $X$ as for $Y$.

\begin{prop}
\label{ucp}
A pseudo-metric space is uniformly coarsely proper 
if and only if it is quasi-isometric to a ULF metric space. 
\end{prop}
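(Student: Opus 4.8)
The plan is to prove the two implications separately, the reverse one being essentially formal and the forward one requiring a net construction.

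First the easy direction. Suppose $X$ is quasi-isometric to a ULF metric space $Z$. A ULF space is at once uniformly coarsely proper: for any fixed $r_0>0$ and any $r\ge r_0$, a subset of $Z$ of diameter at most $r$ has at most $b_Z(r)<\infty$ points, so it is covered by at most $b_Z(r)$ singletons, each of diameter $0\le r_0$; thus $Z$ is uniformly coarsely proper with $\Psi=b_Z$. Since a quasi-isometry is in particular a coarse equivalence and being uniformly coarsely proper is a coarse invariant (as noted after Definition \ref{ucpspace}), it follows that $X$ is uniformly coarsely proper as well.

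For the converse, suppose $X$ is uniformly coarsely proper, with function $\Psi$ and constant $r_0$ as in Definition \ref{ucpspace}. Using Zorn's lemma, I would choose a subset $Y\subseteq X$ maximal for the property that any two distinct points lie at distance at least $2r_0$. By maximality every point of $X$ lies within distance $2r_0$ of $Y$, so the isometric inclusion $Y\hookrightarrow X$ is essentially surjective, hence a quasi-isometry, exactly as in the third item of the Example in \S\ref{mec}. Moreover $Y$, with the restricted pseudo-metric, is a genuine metric space, its distinct points being at positive distance. It then remains to check that $Y$ is ULF, and this is where coarse properness enters: given $S\subseteq Y$ of diameter at most $r$, I would apply the uniform coarse properness of $X$ at the scale $\max(r,r_0)$, covering $S$ by at most $\Psi(\max(r,r_0))$ subsets of $X$ of diameter at most $r_0$. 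The key point is that each such piece contains at most one point of $Y$, since two distinct points of $Y$ are at distance at least $2r_0>r_0$ and so cannot both lie in a set of diameter $\le r_0$; hence $\lvert S\rvert\le\Psi(\max(r,r_0))$, giving $b_Y(r)<\infty$ for every $r$.

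I expect the only genuinely delicate point to be the choice of separation scale for the net. One must separate the points of $Y$ strictly beyond $r_0$ (here by $2r_0$) rather than merely by $r_0$: in a general pseudo-metric space a set of diameter $r_0$ may contain arbitrarily many points pairwise at distance exactly $r_0$, so a separation of exactly $r_0$ would not guarantee one point of $Y$ per covering piece, and the finiteness of $b_Y$ could fail. With strict separation the counting closes immediately, and the two displayed implications together yield the proposition.
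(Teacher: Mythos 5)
Your proof is correct and follows essentially the same route as the paper's own sketch: a Zorn's-lemma-maximal $2r_0$-separated subset $Y$, the observation that the isometric inclusion $Y \subset X$ is essentially surjective and hence a quasi-isometry, and the count that each covering piece of diameter at most $r_0$ meets $Y$ in at most one point, so $b_Y(r) \le \Psi(\max(r,r_0))$. Your treatment of the easy direction (ULF implies uniformly coarsely proper via singleton covers, then coarse invariance), which the paper omits as routine, is also correct, and your remark on why the separation scale must exceed $r_0$ is exactly the right delicate point.
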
 

\begin{proof}[Sketch of proof]
Let us only comment the forward implication. 
Assume that $X$ is uniformly coarsely proper, with $(\Psi,r_0)$ as in the definition. 
Using Zorn's lemma, 
there exists a maximal subset $Y$ 
in which any two distinct points have distance at least $2r_0$. 
The isometric inclusion $Y \subset X$ is a quasi-isometry;
indeed any point in $X$ is at distance at most $2r_0$ of at least one point in $Y$.
Then in $Y$, for every $r \ge r_0$, any subset of diameter at most $r$ 
is covered by at most $\Psi(r)$ subsets of diameter at most $r_0$, and these are singletons. 
\end{proof}
 
\begin{defn}
If $X$ is a uniformly coarsely proper pseudo-metric space, 
the asymptotic equivalence class of the growth 
of a ULF metric space $Y$ quasi-isometric to $X$ 
is called the {\bf growth class} of $X$ 
(it does not depend on $Y$, by Lemma \ref{grce}).
\end{defn} 

Note that two quasi-isometric uniformly coarsely proper metric spaces
have the same growth class.

\vskip.2cm

In a pseudo-metric space $X$, for $Y \subset X$ and $r\ge 0$, 
denote by $B_X(Y,r)$ the set of points at distance at most $r$ to $Y$.

\begin{defn}
A ULF pseudo-metric space is called {\bf amenable} if for any $\eps>0$ and $r>0$, 
there exists a nonempty finite subset $F \subset X$ 
such that $\#(B_X(F,r))/\#(F) \le 1+\eps$.
\end{defn}

\begin{prop}
\label{UPDamenable}
Let $X,Y$ be coarsely equivalent ULF pseudo-metric spaces. 
Then $X$ is amenable if and only if $Y$ is amenable.
\end{prop}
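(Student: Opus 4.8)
The plan is to prove both implications at once by exploiting symmetry. Since being coarsely equivalent is an equivalence relation (Proposition \ref{CategoricalCoarseQI}), a coarse equivalence $f:X\to Y$ admits a coarse inverse $g:Y\to X$ that is again a coarse equivalence, so it suffices to establish a single implication: if $f:X\to Y$ is a coarse equivalence between ULF spaces and $X$ is amenable, then $Y$ is amenable. Applying this to $g$ then yields the reverse implication. Throughout I would fix the data of $f$: a nondecreasing $\Phi_+$ with $d(f(x),f(x'))\le \Phi_+(d(x,x'))$, a nondecreasing $\Phi_-\to\infty$ with $d(f(x),f(x'))\ge\Phi_-(d(x,x'))$, and a constant $C$ with $Y=B_Y(f(X),C)$, writing $\Phi_-^{-1}(t):=\sup\{u:\Phi_-(u)\le t\}$.

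First I would record two consequences of the ULF hypothesis. Because $f$ is coarsely expansive, two points with the same image lie at distance at most $\Phi_-^{-1}(0)$, so each fibre $f^{-1}(y)$ has at most $M_0:=b_X(\Phi_-^{-1}(0))$ points; hence $\#f(F)\ge \#F/M_0$ for every finite $F\subset X$. Dually, $b_Y(2C)<\infty$ bounds the number of points of $Y$ lying within $C$ of a fixed point. These are the only places where the ULF assumption on the two spaces enters.

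The conceptual obstacle, which I would flag at the outset, is that the naive pushforward $f(F)$ of a Følner set need not be Følner at all: already for $f:\Z\to\Z$, $n\mapsto 2n$, the image of $[-n,n]$ has boundary comparable to its size. The remedy is twofold. One must fill in the essential-surjectivity gaps by passing to $F':=B_Y(f(F),C)$, and one must estimate the boundary $B_Y(F',s)\setminus F'$ \emph{directly} rather than bounding the whole ball $B_Y(F',s)$. Concretely, given target parameters $\eps,s>0$, I would set $\rho:=\Phi_-^{-1}(2C+s)$ and $\delta:=\eps/\bigl(M_0\,b_Y(2C)\bigr)$, use amenability of $X$ to choose a finite nonempty $F\subset X$ with $\#\bigl(B_X(F,\rho)\setminus F\bigr)\le\delta\,\#F$, and put $F':=B_Y(f(F),C)$, a finite nonempty set with $\#F'\ge \#f(F)\ge \#F/M_0$.

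The heart of the argument, and the step I expect to demand the most care, is the boundary-to-boundary estimate. Here $B_Y(F',s)\setminus F'=\{y: C<d(y,f(F))\le C+s\}$, and for such a $y$ essential surjectivity supplies $x\in X$ with $d(f(x),y)\le C$; the condition $d(y,f(F))>C$ forces $x\notin F$, while $d(y,f(F))\le C+s$ and coarse expansiveness give $d(x,x_0)\le\rho$ for some $x_0\in F$, so $x\in B_X(F,\rho)\setminus F$. Sending $y$ to such an $x$ defines a map $B_Y(F',s)\setminus F'\to B_X(F,\rho)\setminus F$ that is at most $b_Y(2C)$-to-one, since two preimages of a common $x$ lie within $2C$ of each other. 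Therefore $\#\bigl(B_Y(F',s)\setminus F'\bigr)\le b_Y(2C)\,\#\bigl(B_X(F,\rho)\setminus F\bigr)\le b_Y(2C)\,\delta\,\#F\le\eps\,\#F'$, whence $\#B_Y(F',s)/\#F'\le 1+\eps$. The point of this estimate is exactly that it makes the bounded-geometry multiplicity constants $M_0$ and $b_Y(2C)$ multiply the small quantity $\delta$ and be absorbed into $\eps$, rather than surviving as a fixed factor bounded away from $1$ --- which is precisely what defeats the cruder estimate of the full ball.
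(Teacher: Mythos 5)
Your proof is correct and is essentially the paper's argument run in the mirror direction: where the paper pulls a F{\o}lner set back through $f$ (fixing a coarse inverse $g$ and fibre bounds $k,\ell$ for $f$ and $g$), you push one forward, deriving the fibre bound $M_0$ from coarse expansiveness plus ULF and replacing the fibre bound for $g$ by the ULF constant $b_Y(2C)$, but with the same key devices --- a bounded thickening as the candidate set, a bounded-multiplicity map between boundary difference sets rather than full balls, and pre-dividing $\eps$ by the product of the multiplicity constants. One cosmetic point: your identity $B_Y(F',s)\setminus F'=\{y: C<d(y,f(F))\le C+s\}$ is in general only the inclusion $\subseteq$, which is the direction your estimate actually uses.
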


\begin{proof}
It is enough to show that, if $Y$ is amenable, then so is $X$.
We can assume that $X$ and $Y$ are non-empty.

Let $f : X \to Y$ and $g : Y \to X$ be coarsely Lipschitz maps and $c > 0$ a constant
be such that $\sup_{x \in X} d_X(g(f(x)), x) \le c$
and $\sup_{y \in X} d_Y(f(g(y)), y) \le c$.
Let $\Phi : \R_+ \to \R_+$ be a non-decreasing function such that
$d_Y(f(x), f(x')) \le \Phi (d_X(x,x'))$ for all $x,x' \in X$.
There exist $k,\ell > 0$ such that
$\#(f^{-1}(y)) \le k$ for all $y \in Y$
and $\#(g^{-1}(x)) \le \ell$ for all $x \in X$.

Fix $r,\eps > 0$; we can suppose $r \ge c$.
Let $F \subset Y$ be a non-empty finite subset such that
$$
\#(B_Y(F, \Phi(r)+c)) / \#(F) \le 1 + \frac{\eps}{k\ell} .
$$
Define $F' = \{x \in X \mid d_Y(f(x), F) \le c \}$.
Then $F'$ contains $g(F)$, so that $\# (F') \ge \frac{1}{\ell} \# (F)$.

Let $x \in X$ be such that $0 < d_X(x, F') \le r$.
Then $c < d_Y(f(x), F) \le \Phi (r) + c$, 
that is $f(x) \in B_Y(F, \Phi(r)+c) \smallsetminus F$.
Since the cardinal of $B_Y(F, \Phi(r)+c) \smallsetminus F$
is at most $\eps \#(F) / k \ell$, 
the cardinal of $\{x \in X \mid 0 < d_X(x, F') \le r \}$
is at most $\eps \#(F) / \ell$,
and a fortiori at most $\eps \# (F')$.
It follows that the cardinal of $B_X(F', r)$ is at most
$(1 + \eps) \#(F')$.
\end{proof}

In view of Proposition \ref{UPDamenable}, the following definition is valid.
 
\begin{defn}
A uniformly coarsely proper pseudo-metric space $X$ is called 
\hfill\par\noindent 
{\bf amenable} if it is quasi-isometric to an amenable ULF metric space, 
or equivalently if every ULF metric space coarsely equivalent to $X$ is amenable.
\end{defn}

\begin{exe}
\label{exsubexp}
If $X$ is a nonempty ULF pseudo-metric space and 
\hfill\par\noindent
$\liminf_{n \to \infty} b_X(n+1)/b_X(n)=1$, then $X$ is amenable. 

If $X$ is a non-empty coarsely uniformly proper metric pseudo-space 
of subexponential growth,
 then $X$ is amenable.
 \end{exe}
 
 \noindent
\textbf{On proofs.}
For
Lemma \ref{grce}, 
Proposition \ref{ucp}, 
Proposition \ref{UPDamenable}, 
and Example \ref{exsubexp}, 
see \cite{CH--16}, respectively
Propositions 3.D.6, 3.D.16, 3.D.33, and Example 3.D.38.
 
\subsection{The case of locally compact groups}
 
\begin{thm}
\label{Gucp}
Let $G$ be a $\sigma$-compact locally compact group and $d$ an adapted pseudo-metric (Definition \ref{apd}). Then $(G,d)$ is uniformly coarsely proper. 
\end{thm}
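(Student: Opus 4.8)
The plan is to exploit left-invariance to reduce the covering condition of Definition \ref{ucpspace} to a statement about the single family of balls centred at the identity, and then to extract the bound $\Psi(r)$ from the compactness supplied by properness.

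First I would fix a compact neighbourhood $V$ of the identity $1 \in G$, which exists by local compactness. Since $d$ is locally bounded, the compact set $V$ is $d$-bounded, so I may set $r_0 := \max(1, \operatorname{diam}(V)) > 0$. For $r \ge 0$ write $B(r) = \{g \in G : d(1,g) \le r\}$ for the closed $d$-ball. Because any two points of $B(r)$ are at $d$-distance at most $2r$, the set $B(r)$ is $d$-bounded, whence by properness its closure $\overline{B(r)}$ (in the topology of $G$) is compact.

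Next I would show that $B(r)$ is covered by finitely many sets of $d$-diameter at most $r_0$. The family $\{g\,\operatorname{int}(V) : g \in \overline{B(r)}\}$ is an open cover of the compact set $\overline{B(r)}$, since left translation is a homeomorphism and $1 \in \operatorname{int}(V)$; extracting a finite subcover $g_1\operatorname{int}(V), \dots, g_N\operatorname{int}(V)$ and noting that each $g_i V$ has $d$-diameter equal to $\operatorname{diam}(V) \le r_0$ by left-invariance, I obtain a cover of $B(r)$ by $N$ sets of $d$-diameter at most $r_0$. I would then define $\Psi(r)$ to be the least number of sets of $d$-diameter at most $r_0$ needed to cover $B(r)$; the preceding sentence shows $\Psi(r) < \infty$, and $\Psi$ is nondecreasing because $B(r) \subseteq B(r')$ whenever $r \le r'$. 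To remove the centring assumption, given a nonempty $S \subseteq G$ with $\operatorname{diam}(S) \le r$ I pick $s_0 \in S$; left-invariance gives $s_0^{-1}S \subseteq B(r)$, so $s_0^{-1}S$ is covered by at most $\Psi(r)$ sets of $d$-diameter at most $r_0$, and applying left translation by $s_0$ (which preserves $d$-diameters) covers $S$ by at most $\Psi(r)$ such sets, exactly as required.

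The one point requiring care, and the main obstacle, is that the pseudo-metric $d$ is not assumed to induce the topology of $G$, so the $d$-balls need not be open and one cannot run a compactness argument in the $d$-topology directly. The argument therefore deliberately keeps the two structures separate: compactness and the open cover live in the group topology, while all diameters are measured with $d$, and the two properness-type axioms of an adapted pseudo-metric (compact $\Rightarrow$ $d$-bounded, and $d$-bounded $\Rightarrow$ relatively compact) are precisely what allows one to pass between them.
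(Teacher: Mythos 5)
Your proof is correct, and it is essentially the standard argument behind the result: the paper itself contains no proof of Theorem \ref{Gucp} (it defers to Proposition 3.D.29 and the group-theoretic statements of \cite{CH--16}), and the argument there is the same one you give --- use properness to get compact closure of a ball, cover it by finitely many left translates $g_i\operatorname{int}(V)$ of a fixed compact neighbourhood $V$ of $1$, and use left-invariance of $d$ both to bound the diameters of the translates by $r_0$ and to reduce arbitrary sets of diameter $\le r$ to the ball $B(r)$. Your closing remark correctly identifies the one delicate point (the $d$-topology versus the group topology) and handles it properly, so nothing needs to be added.
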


In particular, the notion of metric amenability 
makes sense for any $\sigma$-compact locally compact group. 
The notion of growth (up to asymptotic equivalence) 
makes sense for any compactly generated locally compact group, 
by considering the growth of $(G,d)$ for a geodesically adapted pseudo-metric $d$ on $G$,
in the sense of Definition \ref{gapd}.

It can also be shown that this notion of growth is equivalent to that
involving the Haar measures of balls in the group.

\begin{defn}
A $\sigma$-compact locally compact group $G$ is {\bf geometrically amenable} 
if $(G,d)$ is amenable.
\end{defn}
 
This is closely related, but not equivalent, to the notion of amenability. 
Recall that a locally compact group $G$, 
endowed with a {\em left} Haar measure $\lambda$, is {\bf amenable} if, 
for every compact subset $S$ and every $\eps>0$, 
there exists a measurable subset $F$ of finite nonzero measure
such that $\lambda(SF) \le (1+\eps)\lambda(F)$. 

Besides, recall that a locally compact group $G$ is {\bf unimodular} 
if the action of $G$ on itself by conjugation preserves some 
(and hence every) left Haar measure.

\begin{prop}
\label{Gmetam}
A $\sigma$-compact, locally compact group is geometrically amenable 
if and only if it is amenable and unimodular.
\end{prop}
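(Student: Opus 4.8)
The plan is to translate the purely metric notion of geometric amenability into a measure-theoretic F\o lner condition on $G$ for a left Haar measure $\lambda$, and then to observe that the asymmetry between left and right translation in that condition is governed exactly by the modular function $\Delta$.

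Fix an adapted pseudo-metric $d$ on $G$, write $\ell(g)=d(1,g)$ for the associated length and $B_r=\{g:\ell(g)\le r\}$ for the ball of radius $r$ at the identity. Left-invariance of $d$ gives $\ell(g)=\ell(g^{-1})$, so each $B_r$ is symmetric, and the triangle inequality gives $B_sB_t\subseteq B_{s+t}$. The elementary identity driving everything is that, for any $E\subseteq G$, the metric $r$-neighbourhood of $E$ equals $EB_r$ (because $d(e,g)\le r$ is the same as $e^{-1}g\in B_r$). Thus, after discretising, F\o lner sets measured by cardinality turn into sets measured by $\lambda$ whose boundary is produced by \emph{right} multiplication by a ball. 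Concretely, I would fix a maximal $r_0$-separated net $Y\subseteq G$; by Theorem \ref{Gucp} and Proposition \ref{ucp} the inclusion $Y\subseteq G$ is a quasi-isometry onto a ULF metric space, and by Proposition \ref{UPDamenable} geometric amenability of $G$ is equivalent to amenability of $Y$. Since $d$ is left-invariant, every cell $yB_{r_0}$ has the same $\lambda$-measure, and uniform coarse properness bounds the multiplicity of the covering $\{yB_{r_0}\}_{y\in Y}$; this lets me pass back and forth between counting points of $Y$ and measuring the corresponding unions of cells with $\lambda$. Carrying this out, geometric amenability becomes equivalent to the \textbf{right F\o lner condition} for $\lambda$: for all $r,\eps>0$ there is a measurable $\Omega$ of finite positive measure with $\lambda(\Omega B_r)\le(1+\eps)\lambda(\Omega)$.

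Granting this reformulation, both implications are short. If $\Omega$ satisfies $\lambda(\Omega B_r)\le(1+\eps)\lambda(\Omega)$ and $g\in B_r$, then $\Omega g\subseteq\Omega B_r$, so $\Delta(g)\lambda(\Omega)=\lambda(\Omega g)\le(1+\eps)\lambda(\Omega)$, giving $\Delta(g)\le 1+\eps$; letting $\eps\to 0$ and using $\Delta(g^{-1})=\Delta(g)^{-1}$ shows $\Delta\equiv 1$, i.e. $G$ is unimodular. When $G$ is unimodular, $\lambda$ is bi-invariant and inversion-invariant and each $B_r$ is symmetric, so applying inversion turns $\lambda(\Omega B_r)\le(1+\eps)\lambda(\Omega)$ into $\lambda(B_r\Omega^{-1})\le(1+\eps)\lambda(\Omega^{-1})$; since every compact $S$ lies in some $B_r$ (local boundedness of $d$), this is exactly the left F\o lner condition defining amenability. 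The converse is the same computation read backwards: amenability gives the left F\o lner condition for $S=B_r$, and unimodularity converts it into the right F\o lner condition above, hence into geometric amenability.

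The main obstacle is the reformulation in the second paragraph, i.e. the bookkeeping that makes ``counting points of the net'' and ``measuring cells with $\lambda$'' interchangeable while keeping the boundary ratios under control. Two facts make it work: left-invariance forces all cells $yB_{r_0}$ to have equal Haar measure, so the cardinality of a finite $F\subseteq Y$ is comparable to $\lambda(FB_{r_0})$; and uniform coarse properness (Theorem \ref{Gucp}) bounds the number of cells meeting a given cell, so the multiplicative constants lost in passing between $\#(B_Y(F,r))$ and $\lambda(\Omega B_{r'})$ are independent of $F$. One must check that a near-optimal Haar F\o lner set can be replaced by a union of net cells with only a controlled loss in the ratio, and symmetrically that a net F\o lner set inflates to a Haar F\o lner set; both are routine once the constants are isolated, but this is where all the analytic content sits, the two displayed implications being essentially formal.
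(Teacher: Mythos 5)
Your proof is correct and takes essentially the same route as the paper's: both reduce geometric amenability to the right F\o lner condition $\lambda(FS)\le(1+\eps)\lambda(F)$ for a \emph{left} Haar measure (your net-and-cells discretisation just fleshes out the ``routine verification'' the paper asserts), then observe that $\lambda(Fs)=\Delta(s)\lambda(F)$ forces unimodularity, while for unimodular $G$ inversion-invariance of $\lambda$ identifies the right and left F\o lner conditions. The only nuance worth flagging is measurability of the balls $B_r$ (an adapted pseudo-metric need not be continuous), but replacing $B_r$ by its compact closure, harmless by local boundedness and properness, settles this.
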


\begin{proof}[On the proof] 
Let us say that a locally compact group is {\bf right-amenable} 
if, for every compact subset $S$ and every $\eps>0$, 
there exists a measurable subset $F$ of finite nonzero measure 
such that $\lambda(FS) \le (1+\eps)\lambda(F)$. 
Note that, in comparison with amenability, $SF$ has been replaced by $FS$, 
while we still have a {\em left} Haar measure. 
The subset $FS$ can be thought of as a metric thickening of $F$, 
and a routine verification shows that a $\sigma$-compact locally compact group 
is geometrically amenable if and only if it is right-amenable. 
Now on the one hand, for a {\em unimodular} group, 
it is clear that amenability and right-amenability are equivalent properties. 
On the other hand, if a locally compact $G$ is not unimodular, 
if $s$ is an element with $\Delta(s)>1$, so that $\lambda(Fs)=\Delta(s)\lambda(F)$, 
the condition of right-amenability fails for $S=\{s\}$.
\end{proof}
 
\begin{cor}
\label{A+Uisci}
To be amenable and unimodular is a coarse invariant among locally compact groups. 
\par
In particular, to be amenable is a coarse invariant among discrete groups. 
\end{cor}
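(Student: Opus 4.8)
The plan is to reduce the statement to the coarse invariance of \emph{geometric} amenability, which in turn is an instance of the coarse invariance of metric amenability for uniformly coarsely proper spaces. Let $G$ and $H$ be coarsely equivalent locally compact groups. Since coarse equivalence is an equivalence relation, hence symmetric (Proposition \ref{CategoricalCoarseQI}), it suffices to prove that if $G$ is amenable and unimodular then so is $H$. Because being $\sigma$-compact is itself a coarse invariant (Proposition \ref{SigmacomopactAndCE}), the groups $G$ and $H$ are simultaneously $\sigma$-compact or simultaneously non-$\sigma$-compact, and I would treat these two cases separately.

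First I would handle the $\sigma$-compact case, which is the heart of the argument. Choose adapted pseudo-metrics $d_G$ on $G$ and $d_H$ on $H$, which exist by Theorem \ref{ExistAdapted}. By Proposition \ref{CompCoarseEtCoaremetriquePourG}, a coarse equivalence $G \to H$ of locally compact groups is the same data as a coarse equivalence $(G,d_G) \to (H,d_H)$ of pseudo-metric spaces, so these two spaces are coarsely equivalent; and both are uniformly coarsely proper by Theorem \ref{Gucp}. The key point is then that amenability is a coarse invariant within the class of uniformly coarsely proper pseudo-metric spaces: by definition such a space $X$ is amenable exactly when every ULF metric space coarsely equivalent to $X$ is amenable, and, since coarse equivalence is an equivalence relation (Proposition \ref{CategoricalCoarseQI}) and every uniformly coarsely proper space has a ULF metric space in its coarse class (Proposition \ref{ucp}), this condition depends only on the coarse equivalence class of $X$; the real substance here is Proposition \ref{UPDamenable}. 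Hence $(G,d_G)$ is amenable if and only if $(H,d_H)$ is. Finally I would translate back through Proposition \ref{Gmetam}: $G$ is amenable and unimodular iff it is geometrically amenable iff $(G,d_G)$ is amenable as a pseudo-metric space, and likewise for $H$. Chaining these equivalences gives the claim when $G$ and $H$ are $\sigma$-compact.

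The discrete statement then follows immediately: a discrete group is automatically unimodular, since counting measure is bi-invariant, so among discrete groups the property ``amenable and unimodular'' collapses to ``amenable''; and if two coarsely equivalent discrete groups have one member countable (equivalently $\sigma$-compact), then both are countable by Proposition \ref{SigmacomopactAndCE}, placing us in the case already settled. The step I expect to be the main obstacle is the non-$\sigma$-compact case: the entire metric chain above rests on the existence of adapted pseudo-metrics, which by Theorem \ref{ExistAdapted} is equivalent to $\sigma$-compactness, so neither Proposition \ref{Gmetam} nor Proposition \ref{UPDamenable} is available there. To cover it one must argue directly, for instance by exploiting that amenability of a general locally compact group $G$, as well as its unimodularity (the modular function satisfies $\Delta_G(g)=\Delta_U(g)$ for $g$ in an open subgroup $U$), is determined by its open $\sigma$-compact subgroups, and by checking that a coarse equivalence between non-$\sigma$-compact groups is compatible with this directed structure; alternatively one may read the corollary as a statement about the $\sigma$-compact (in particular compactly generated) groups that are the intended object of the coarse theory.
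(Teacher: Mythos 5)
Your argument is correct and is essentially the paper's own proof: the paper derives Corollary \ref{A+Uisci} as a straightforward consequence of Proposition \ref{Gmetam}, which is exactly your chain through Theorem \ref{ExistAdapted}, Proposition \ref{CompCoarseEtCoaremetriquePourG}, Theorem \ref{Gucp}, and Proposition \ref{UPDamenable}, plus the observation that discrete groups are unimodular. Your flagged worry about the non-$\sigma$-compact case is reasonable but does not put you behind the paper, whose one-line proof likewise only covers the $\sigma$-compact setting (geometric amenability is defined only there), so your fallback reading of the corollary as implicitly concerning $\sigma$-compact groups matches the paper's intent.
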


Note that this is not true when unimodularity is dropped. 
Indeed, there are many cocompact closed inclusions of groups $H \subset G$ 
with $H$ amenable (necessarily non-unimodular), and $G$ non-amenable.
Let us indicate two examples,
with $n\ge 2$ and $\K$ a non-discrete locally compact field, e.g.\ $\K = \R$:
\begin{itemize}
\item[(1)] 
$G = \GL_n(\K)$, $H = \mathrm{T}_n(\K)$, the subgroup of upper triangular matrices;
\item[(2)]
$G = \GL_n(\K) \ltimes \K^n$ (the group of affine transformations), 
$H = \mathrm{T}_n(\K) \ltimes \K^n$. 
\end{itemize}
Observe that $G$ is unimodular in the first example, and non-unimodular in the second.

\vskip.2cm

\noindent
\textbf{On proofs.} 
For Theorem \ref{Gucp} and Proposition \ref{Gmetam}, 
see Propositions 3.D.29 and 4.F.5 in \cite{CH--16}.
Corollary \ref{A+Uisci} is a straightforward consequence of Proposition \ref{Gmetam}.

\section{Compactly presented groups}
\label{coprg}

\subsection{Coarsely simply connected metric spaces} 
 
Let $X$ be a pseudo-metric space, $c$ a positive real number,
and $k$ is a positive integer.
The {\bf Rips complex} $\Rips_c^k(X)$ is the simplicial complex whose set of vertices is $X$, 
and a subset $Y \subset X$ forms a simplex if its cardinal is at most $k+1$ 
and its points are pairwise at distance $\le c$. 
The $k$-simplices are endowed with the metric 
induced by the standard $\ell^\infty$-norm on $\R^{k+1}$.
\par

For instance, the pseudo-metric space $X$ is coarsely connected (Definition \ref{d_cg}) 
if and only if $\Rips_c^1(X)$ is connected for some $c$ 
(then $\Rips_{c'}^k(X)$ is connected for all $c'\ge c$ and $k\ge 1$).

\begin{defn}
The pseudo-metric space $X$ is {\bf coarsely simply connected} 
if there exist $c \ge 0$ and $c' \ge c$  such that $\Rips_c^1(X)$ is connected 
and every loop in $\Rips_c^1(X)$ is homotopically trivial in $\Rips_{c'}^2(X)$.
\end{defn}

It is possible to interpret the latter condition by a certain discrete connectedness property, 
along with the requirement that every discrete path in $X$ 
has a discrete homotopy to the trivial loop. 
The precise statement is technical and we refer to \cite{CH--16}.

\begin{prop}
\label{csccoarseinv}
To be coarsely simply connected is a coarse invariant of pseudo-metric spaces.
\end{prop}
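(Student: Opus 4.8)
The plan is to realise $f$ as an isomorphism of the metric coarse category (Proposition~\ref{CategoricalCoarseQI}), so that it admits a coarse inverse $g\colon Y\to X$ with $g\circ f\sim\mathrm{id}_X$ and $f\circ g\sim\mathrm{id}_Y$; I fix a common closeness constant $D$ and nondecreasing control functions, all denoted $\Phi$, for $f$, $g$ and their composites (a composite of coarse maps being coarse). Everything then rests on two functoriality facts for the Rips complex, after which coarse simple connectedness is transported from $X$ to $Y$ by pushing loops across $g$, filling them in $X$, pushing the fillings back across $f$, and correcting with the closeness $f\circ g\sim\mathrm{id}_Y$. It suffices to treat the implication ``$X$ coarsely simply connected $\Rightarrow$ $Y$ coarsely simply connected''.

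The first fact (functoriality) is that a $\Phi$-coarse map $h\colon X\to Y$ sends points pairwise at distance $\le c$ to points pairwise at distance $\le\Phi(c)$, hence carries the vertices of a simplex of $\Rips_c^k(X)$ to the vertices of a simplex of $\Rips_{\Phi(c)}^k(Y)$; thus $h$ induces a simplicial map $\Rips_c^k(X)\to\Rips_{\Phi(c)}^k(Y)$ for all $c,k$, taking loops to loops and fillings (null-homotopies supported on the $2$-skeleton) to fillings, at the cost of rescaling $c$ to $\Phi(c)$. The second fact (prism lemma) is that if $h\sim h'$ with constant $D$, then for a loop $(z_0,\dots,z_n=z_0)$ in $\Rips_c^1(X)$ the four points $h(z_{i-1}),h(z_i),h'(z_i),h'(z_{i-1})$ are pairwise within some $c''=c''(\Phi,D,c)$, so each such quadrilateral splits into two $2$-simplices of $\Rips_{c''}^2(Y)$; stringing these elementary prisms around the loop gives a homotopy between $h_*\gamma$ and $h'_*\gamma$ in $\Rips_{c''}^2(Y)$.

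With these in hand I would assemble as follows. Connectedness of $\Rips_d^1(Y)$ for suitable $d$ is free, since coarse connectedness is already known to be a coarse invariant. Given a loop $\gamma$ in $\Rips_d^1(Y)$, the pushed loop $g_*\gamma$ lies in $\Rips_{\Phi(d)}^1(X)$; I fill it in some $\Rips_{e'}^2(X)$, push this filling forward by $f$ to a filling of $f_*g_*\gamma$ in $\Rips_{\Phi(e')}^2(Y)$, and apply the prism lemma to $f\circ g\sim\mathrm{id}_Y$ to homotope $\gamma$ to $f_*g_*\gamma$ inside $\Rips_{c''}^2(Y)$. Concatenating, $\gamma$ is null-homotopic in $\Rips_{d'}^2(Y)$ with $d'=\max(\Phi(e'),c'')$ depending only on $d$, $D$ and the control functions, not on $\gamma$; this exhibits $Y$ as coarsely simply connected with parameters $(d,d')$.

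The one genuine obstacle is the passage where ``$g_*\gamma$ lives at scale $\Phi(d)$, which is in general larger than the scale $c$ provided by the coarse simple connectedness of $X$''. Since $d$ is constrained from below by the connectedness requirement, one cannot in general shrink $\Phi(d)$ below $c$, so I must know that the filling property of $X$ is \emph{robust under enlarging the scale}: that loops in $\Rips_{c_1}^1(X)$ with $c_1\ge c$ are fillable at some scale $e'=e'(c_1)$, uniformly in the loop. This rescaling statement is precisely the combinatorial reinterpretation of coarse simple connectedness (as a uniformly controlled discrete-homotopy property) alluded to before the proposition, and it is where the real work lies; granting it, the transport above is routine scale bookkeeping.
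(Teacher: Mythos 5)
Your outline---coarse inverse from Proposition~\ref{CategoricalCoarseQI}, simplicial functoriality of $\Rips^k_c$ under $\Phi$-coarse maps at the cost of rescaling $c$ to $\Phi(c)$, and the prism homotopy between close maps---is exactly the scheme of the proof the paper cites (Proposition~6.A.7 in \cite{CH--16}), and both of your two ``facts'' are correct as stated. The genuine gap is the step you explicitly grant: the rescaling statement. It is not ``routine scale bookkeeping'' that a lemma will discharge, because from the definition as literally written in this survey (``there \emph{exist} $c\le c'$ with $\Rips_c^1(X)$ connected and every loop of $\Rips_c^1(X)$ filled in $\Rips_{c'}^2(X)$'') the rescaling claim is \emph{false}, and in fact the proposition itself fails for that literal reading. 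Sketch of a counterexample: in $\R^2$, for each $k\ge 10$ take a ``necked circle'' $H_k$, i.e.\ a $\tfrac12$-dense chain of points along a closed curve of diameter about $k$ with one chord removed, leaving two tips $a_k,b_k$ at distance $1$, all other cross-distances along the curve being $\ge 1$; place the $H_k$ far apart and join them by a $\tfrac12$-dense tree of chains kept at distance $\ge 1$ from everything else. Then $\Rips^1_{1/2}(X)$ is a tree, so $X$ satisfies the quoted definition with $(c,c')=(\tfrac12,\tfrac12)$; but at scale $1$ the tip edge closes a loop $\gamma_k$ encircling a hole of radius about $k$, and the usual winding-number argument (simplices at scale $c_2$ realize inside $\R^2$ minus the center of the hole once $c_2\ll k$) shows $\gamma_k$ is essential in $\Rips^2_{c_2}(X)$ for every fixed $c_2$ and large $k$. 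So loops at scale $1$ admit no uniform filling scale---which blocks your transport as soon as $\Phi(d)\ge 1$---and moreover a maximal $1$-separated net $Y\subset X$ is quasi-isometric to $X$ (by the example in \S\ref{mec} that every pseudo-metric space is quasi-isometric to a $1$-separated one), yet $Y$ has no edges at any scale $<1$, so every admissible $c$ for $Y$ already sees the loops $\gamma_k$ and no pair $(c,c')$ works: the literal definition is not coarse-invariant.

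The resolution is definitional, not a missing lemma: the notion actually used in \cite{CH--16} is the multi-scale one alluded to in the technical remark just before the proposition---$X$ is coarsely simply connected if for \emph{every} $c_1$ such that $\Rips^1_{c_1}(X)$ is connected there exists $c_2=c_2(c_1)\ge c_1$ filling all loops of $\Rips^1_{c_1}(X)$ in $\Rips^2_{c_2}(X)$. With that definition your rescaling property holds by fiat, and the rest of your argument---push $\gamma$ through $g$, fill at scale $c_2(\Phi(d))$, push back through $f$, prism-correct using $f\circ g\sim\mathrm{id}_Y$, with all constants depending only on $d$, $D$ and the control functions---is complete and coincides with the book's proof. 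So you have the right architecture and have correctly located the crux, but ``granting'' it is precisely where the attempt is incomplete: the uniform-in-scale filling property must be built into the definition (and then verified to be what you transport), since it cannot be derived from the single-scale statement.
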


\subsection{Compactly presented groups}

\begin{defn}
A {\bf bounded presentation} is a presentation of the form $\langle S\mid R\rangle$, 
where $S$ is an arbitrary set and $R \subset F_S$ 
(where $F_S$ is the free group over $S$) is a set of words of bounded length with respect to $S$. 

A locally compact group $G$ is {\bf compactly presented} 
if there exists an isomorphism of a boundedly presented group $\langle S\mid R\rangle$ onto $G$ 
such that the image of $S$ is a compact generating subset of $G$.
\end{defn}

In other words, $G$ has a presentation by a compact subset of generators 
and relators of bounded length. 

\begin{exe}
A discrete group is compactly presented if and only if it is finitely presented. 
\end{exe} 

Recall that, if $S$ is a generating subset of a group, 
the Cayley graph $\mathcal{G}(G,S)$ is the graph 
whose set of vertices is $G$ and for which $(g,h)$ is an edge 
whenever $g^{-1}h \in S \cup S^{-1}$.
Observe that $\mathcal{G}(G,S)$ is $\Rips_1^1(X)$
when $(X,d) = (G, d_S)$.

Standard homotopy arguments show the following:

\begin{prop}
\label{cpviaCayley}
Let $G$ be a compactly generated locally compact group; 
consider its Cayley graph $\mathcal{G}(G,S)$ 
with respect to some compact generating subset $S$. 

Then $G$ is compactly presented if and only if $\pi_1(\mathcal{G}(G,S))$ 
is generated by loops of bounded size, 
in other words if and only if $\mathcal{G}(G,S)$ can be filled in a $G$-invariant way 
by gons of bounded size so that the resulting 2-complex is simply connected.
\end{prop}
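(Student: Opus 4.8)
The plan is to translate the group-theoretic statement into the topology of the presentation $2$-complex and its universal cover, and then to run the Seifert--van Kampen theorem and covering-space theory. Write $\phi\colon F_S\to G$ for the canonical surjection induced by $S\hookrightarrow G$, and set $N=\ker\phi$. The first step is to identify the fundamental group of the Cayley graph. A closed edge-path based at the vertex $1$ spells a word in $S\cup S^{-1}$ whose value under $\phi$ is its terminal vertex, so it is a loop exactly when that word lies in $N$; viewing $\mathcal{G}(G,S)$ as the covering of the wedge $\bigvee_{s\in S}S^1$ associated with the subgroup $N$ (normal in $F_S$), whose deck group is $F_S/N=G$, yields a canonical isomorphism $\pi_1(\mathcal{G}(G,S),1)\cong N$. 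In particular $\pi_1(\mathcal{G}(G,S))$ is free, and the $G$-action on the graph by deck transformations matches the conjugation action of $G=F_S/N$ on $N$.

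For the direction from a bounded presentation to a $G$-invariant filling, I would suppose $\langle S\mid R\rangle$ is a bounded presentation with $S$ a compact generating subset, so that $N=\langle\langle R\rangle\rangle$ is the normal closure of $R$ in $F_S$ and each $r\in R$ has length at most some $k$. Attaching to $\mathcal{G}(G,S)$ one $2$-cell for every pair $(g,r)\in G\times R$, glued along the loop at $g$ spelling $r$, produces a $G$-invariant family of gons of size $\le k$, giving the $2$-skeleton of the universal cover of the presentation complex $Y=Y(S,R)$. By van Kampen, attaching these cells kills the subgroup of $\pi_1(\mathcal{G}(G,S))=N$ generated by all $F_S$-conjugates of the $r$, i.e.\ it quotients $N$ by $\langle\langle R\rangle\rangle=N$; hence the filled complex is simply connected.

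For the converse, given a $G$-invariant filling of $\mathcal{G}(G,S)$ by gons of size at most $k$ whose result is simply connected, I would choose one representative gon in each $G$-orbit and read off the associated words, obtaining a set $R$ of words in $S\cup S^{-1}$ of length $\le k$, each lying in $N$. Running the van Kampen computation backwards, simple connectivity says precisely that $\langle\langle R\rangle\rangle$ equals $N=\ker\phi$, so $\langle S\mid R\rangle$ presents $G$ by the compact generating set $S$ with relators of bounded length, and $G$ is compactly presented. The reformulation in terms of $\pi_1$ is the same statement read through $\pi_1(\mathcal{G}(G,S))\cong N$: ``$\pi_1$ generated by loops of bounded size,'' meaning generated under the $G$-action by loops of length $\le k$, is exactly the condition $N=\langle\langle R\rangle\rangle$ for a bounded $R$.

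The main obstacle I anticipate is point-set: since $S$ is compact but in general neither finite nor discrete, the wedge $\bigvee_{s\in S}S^1$ and the resulting $2$-complex have uncountably many cells, and one must check that covering-space theory and van Kampen apply in this generality (or reduce to a countable model, using that $G$ is $\sigma$-compact). Care is also needed to match ``gons of bounded size, attached $G$-invariantly'' with ``normal closure of bounded-length relators,'' that is, to verify that the $G$-translates of a based gon realize exactly the full set of $F_S$-conjugates of the associated relator; this is precisely where the identification of the deck action with conjugation on $N$ does the work.
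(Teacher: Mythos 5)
Your argument is correct and is exactly the ``standard homotopy argument'' the paper has in mind (the paper gives no written proof, deferring to an easy exercise): identify $\pi_1(\mathcal{G}(G,S),1)$ with $N=\ker(F_S\to G)$ via covering-space theory over $\bigvee_{s\in S}S^1$, with the deck action of $G$ matching conjugation on $N$, and then use van Kampen to translate a $G$-invariant filling by gons of size $\le k$ into the condition $N=\langle\langle R\rangle\rangle$ for a set $R$ of relators of length $\le k$. Two details you gloss over are genuinely routine but worth recording: the paper's Cayley graph carries a single unoriented edge per pair $(g,h)$ with $g^{-1}h\in S\cup S^{-1}$, so it differs from the labeled covering graph of the wedge only by bigons and self-loops, i.e.\ by free factors of $\pi_1$ generated by loops of length $\le 2$, which does not affect ``generated by loops of bounded size''; and in the ``only if'' direction the definition of compact presentability provides a bounded presentation over \emph{some} compact generating set, so one needs the standard bounded Tietze-transformation observation (using that any compact subset lies in $(S\cup S^{-1}\cup\{1\})^n$ for some $n$, by Baire) to get a bounded presentation over the given $S$.
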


Here, ``loops of bounded size" more precisely means 
loops of the form $\gamma c\gamma^{-1}$, for paths $\gamma$ starting from $1$, 
and loops $c$ of bounded diameter, based at the end of $\gamma$.

\begin{prop}
\label{cpco}
Let $G$ be a $\sigma$-compact locally compact group 
and $d$ an adapted pseudo-metric on $G$. 
Then $G$ is compactly presented if and only if $(G,d)$ is coarsely simply connected. 
\par
In particular, to be compactly presented is invariant under coarse equivalence 
among $\sigma$-compact locally compact groups.
\end{prop}

Standard facts about finitely presented groups carry over to compactly presented groups.

\begin{prop}
\label{modn}
Let $G$ be a locally compact group, $N$ a closed normal subgroup and $Q=G/N$. 
\begin{enumerate}
\item If $G$ is compactly presented and $N$ is compactly generated qua normal subgroup, 
then $Q$ is compactly presented;
\item if $N$ and $Q$ are compactly presented then so is $G$;
\item if $G$ is compactly generated and $Q$ is compactly presented, 
then $N$ is compactly generated qua normal subgroup.
\end{enumerate}
\end{prop}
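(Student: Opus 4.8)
The plan is to prove all three statements by transcribing the classical arguments for finitely presented discrete groups, working throughout with the presentation-theoretic form of compact presentability (a compact generating set together with relators of bounded length) attached to the short exact sequence $1\to N\to G\xrightarrow{\pi}Q\to 1$. Two elementary principles will be used repeatedly. First, the continuous image of a compact set is compact; in particular, for any bound $D$ the set of elements obtained by evaluating words of length $\le D$ over a compact generating set is compact, and likewise a set of conjugates $\{\,bab^{-1}\,\}$ with $a,b$ ranging over compact sets is compact. Second, an adapted (e.g.\ word) metric is proper, so a compact subset is bounded; hence every element of a prescribed compact subset is represented by a word of length at most some uniform $D$. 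Together these let one replace ``finitely many'' by ``of uniformly bounded length'' at each point where the discrete proof invokes finiteness. Recall also that compactly presented groups are compactly generated, hence $\sigma$-compact, so the results of \S\ref{coprg} apply.

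\textbf{The two easy directions (1) and (3).} For (1), write $G=\langle S\mid R\rangle$ with $S$ mapping to a compact generating set and $R$ of bounded length, and let $T\subset N$ be a compact set whose normal closure is $N$. Representing each $t\in T$ by a word $w_t$ in $S$ of length $\le D$ (possible since $T$ is bounded), one has $Q=G/N=\langle S\mid R\cup\{w_t:t\in T\}\rangle$: killing the normal closure of $T$ in $G$ yields $Q$, the relators still have bounded length, and $\pi(S)$ is a compact generating set of $Q$; hence $Q$ is compactly presented. For (3), fix a compact generating set $S$ of $G$; then $\pi(S)$ generates $Q$, and since compact presentability is independent of the chosen compact generating set (Propositions \ref{cpviaCayley} and \ref{cpco}), we may present $Q=\langle\pi(S)\mid R\rangle$ with $R$ of bounded length. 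Replacing each letter $\pi(s)$ in a relator by $s$ and evaluating in $G$ gives, for each $r\in R$, an element $\tilde r\in N$; being evaluations of bounded words over the compact set $S$, the $\tilde r$ lie in a compact set $K\subset N$. Let $M$ be the normal closure of $K$ in $G$. The relators $R$ hold in $G/M$ by construction, so the presentation of $Q$ provides a homomorphism $\mu:Q\to G/M,\ \pi(s)\mapsto sM$; since the quotient map $\bar\pi:G/M\to Q$ satisfies $\bar\pi\circ\mu=\mathrm{id}_Q$ and $\mu$ is onto, both are isomorphisms, whence $M=N$. Thus $N$ is the normal closure of the compact set $K$, i.e.\ compactly generated qua normal subgroup.

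\textbf{Part (2): construction.} Choose compact presentations $N=\langle A\mid R\rangle$ and $Q=\langle B\mid S\rangle$. First lift $B$ to a \emph{compact} set $\tilde B\subset G$ with $\pi(\tilde B)=B$: since $\pi$ is an open map of locally compact groups, one covers $B$ by finitely many translates of $\pi(\mathrm{int}\,V)$ for a compact neighbourhood $V$ of $1$, takes preimages to get a compact $\tilde B'$ with $\pi(\tilde B')\supseteq B$, and sets $\tilde B=\tilde B'\cap\pi^{-1}(B)$, compact as a closed subset of $\tilde B'$. Then $A\sqcup\tilde B$ is a compact generating set of $G$ (as $\langle A\rangle=N$ and $\pi(\tilde B)=B$ generates $Q$), and I form the presentation
\[
\langle\, A\sqcup\tilde B \,\mid\, R,\ \ \tilde s\,w_s^{-1}\ (s\in S),\ \ \tilde b^{\pm1}\,a\,\tilde b^{\mp1}\,u^{-1}\ (a\in A,\ \tilde b\in\tilde B)\,\rangle,
\]
where $\tilde s$ is the lift of the $Q$-relator $s$ to a word in $\tilde B$, and $w_s$ and $u=u(a,b,\pm)$ are words in $A$ representing the elements $\tilde s\in N$ and $\tilde b^{\pm1}a\tilde b^{\mp1}\in N$ respectively (these lie in $N$ by normality, and $\tilde s\in N$ because $\pi(\tilde s)=s=1$). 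By the first principle the elements $\tilde s$ and $\tilde b^{\pm1}a\tilde b^{\mp1}$ range over compact subsets of $N$, so by the second they admit representatives $w_s,u$ of bounded length; hence the whole presentation is bounded over a compact generating set.

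\textbf{Part (2): completeness, and the main obstacle.} It remains to verify that this presentation defines $G$, and this is the step I expect to be the main obstacle. The tautological homomorphism $\theta:\hat G\to G$ from the presented group is surjective since $A\cup\tilde B$ generates $G$ and all listed relators hold in $G$; the content is injectivity. The standard argument is a diagram chase. Let $\hat N\trianglelefteq\hat G$ be the subgroup generated by the free generators $A$; the conjugation relators make $\hat N$ normal, and $\theta(\hat N)=N$. Dividing by $\hat N$ kills $A$ and turns each relator $\tilde s\,w_s^{-1}$ into the relator $s$ in the generators $\tilde B$, so $\hat G/\hat N\cong\langle B\mid S\rangle=Q$ compatibly with $G\to Q$. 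On the other hand, because the images of $A$ in $\hat N$ satisfy the relators $R$, the presentation $N=\langle A\mid R\rangle$ yields a surjection $N\twoheadrightarrow\hat N$; composing it with $\theta|_{\hat N}:\hat N\to N$ gives the identity on generators, hence $\hat N\cong N$. The short five-lemma applied to $1\to\hat N\to\hat G\to Q\to1$ and $1\to N\to G\to Q\to1$ now forces $\theta$ to be an isomorphism, so the presentation defines $G$ and $G$ is compactly presented. The delicate point is precisely this completeness argument --- verifying that $\hat N$ is no smaller than $N$, where the relators $R$ and the normality relators must interact correctly; everything else reduces to the two compactness principles stated at the outset.
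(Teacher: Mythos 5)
Parts (1) and (3) are essentially right, but part (2) has a genuine gap, located precisely at the completeness step you flagged --- though the failure is in the claim $\hat G/\hat N\cong Q$, not in $\hat N\cong N$. The problem is that, unlike the discrete case where one lifts $B$ via a set-theoretic section, your compact lift $\tilde B=\tilde B'\cap\pi^{-1}(B)$ is in general very far from mapping injectively onto $B$: a \emph{compact} section need not exist at all. (For $G=\R$, $N=\Z$, $Q=\R/\Z$, $B=Q$: a compact $C\subset\R$ meeting each fiber exactly once would be homeomorphic, via $\pi|_C$, to the circle, which embeds in no subset of $\R$; and your construction typically yields something like $\tilde B=[0,1]$, with two lifts of $\bar 0$.) Since your lifted relators $\tilde s$ use one \emph{fixed} lift per letter, any surplus lift $\tilde b'\in\tilde B$ appears in no relator except the conjugation relators, and in $\hat G/\hat N$ it becomes a free generator: one gets $\hat G/\hat N\cong Q \ast F$ for a nontrivial free group $F$, so $\theta$ has a large kernel and the presentation does not define $G$. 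The fix is cheap and restores your diagram chase verbatim: add the relators $\tilde b^{-1}\tilde b'\,v^{-1}$ for \emph{all} pairs $\tilde b,\tilde b'\in\tilde B$ with $\pi(\tilde b)=\pi(\tilde b')$, where $v$ is an $A$-word representing $\tilde b^{-1}\tilde b'\in N$; these elements range over the compact set $\tilde B^{-1}\tilde B\cap N$, hence $v$ can be taken of bounded length, and these relators normally generate the kernel of $F_{\tilde B}\to F_{B}$, which is exactly what is needed to conclude $\hat G/\hat N\cong\langle B\mid S\rangle=Q$.

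A smaller slip of the same nature occurs in part (3): your map $\mu:Q\to G/M$ requires a fixed choice of lift $s_b\in S$ for each $b\in\pi(S)$, and with that choice $\bar\pi\circ\mu=\mathrm{id}_Q$ indeed holds; but the surjectivity of $\mu$, which your argument uses to force $M=N$, is not automatic, since $G/M$ is generated by \emph{all} the classes $sM$ ($s\in S$) and nothing in $M$ identifies $sM$ with $s_{\pi(s)}M$ when $\pi$ is not injective on $S$. Enlarging $K$ by the compact set $S^{-1}S\cap N$ (which contains every $s_{\pi(s)}^{-1}s$) repairs this in one line. Part (1) is correct as written. For comparison: the paper itself gives no proof of this proposition, deferring to Proposition 8.A.10 of \cite{CH--16}, where compact presentability is handled via normal generation of $\ker(F_S\to G)$ by elements of bounded length; that formulation never chooses sections and so absorbs both of the above lifting issues automatically --- your presentation-by-generators-and-relators route works too, but only once the identification relators among multiple lifts are put in by hand.
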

 
There are no free groups in the context of locally compact groups. 
Nevertheless, we have the following:

\begin{prop}
\label{cgquotientcp}
Every compactly generated locally compact group $Q$ is isomorphic 
to the quotient of some compactly presented locally compact group $G$ 
by a discrete normal subgroup $N$.
\end{prop}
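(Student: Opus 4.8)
The discrete analogue of this statement is essentially trivial: if $Q$ is finitely generated with finite generating set $S$, the free group $F_S$ is finitely presented and surjects onto $Q$ with kernel an automatically discrete normal subgroup. Since there is no free object in the locally compact world, the plan is to manufacture a geometric substitute, namely a compactly presented group $G$ acting on a simply connected complex that covers a Rips complex of $Q$. First I would fix a compact symmetric generating set $S$ of $Q$ that is a neighbourhood of $1$ (possible since $Q$ is locally compact and compactly generated), together with the associated word metric $d = d_S$, which is adapted. Because $Q$ is compactly generated it is coarsely connected by Theorem \ref{CaractCg}, so for $c$ large enough the Rips complex $P := \Rips_c^2(Q,d)$ is connected; here $Q$ acts simplicially, freely on vertices, and cocompactly, since the vertex set is a single orbit and the edge and triangle orbits are parametrised by the compact sets of pairs and triples that are pairwise at distance $\le c$.

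Next I would pass to the universal cover $p : \widetilde{P} \to P$, which is simply connected, and let $G$ be the group of all homeomorphisms of $\widetilde{P}$ that lift some element of $Q$, that is, satisfy $p\circ \widetilde{g} = g\circ p$ for some $g\in Q$. Sending $\widetilde{g}$ to $g$ yields a short exact sequence
\[ 1 \longrightarrow \pi_1(P) \longrightarrow G \longrightarrow Q \longrightarrow 1, \]
whose kernel is the deck transformation group $\pi_1(P)$, normal in $G$. I would topologize $G$ by declaring $G\to Q$ to be a covering of topological groups: over a small simply connected neighbourhood of $1$ in $Q$ the partial action lifts uniquely once a basepoint in $\widetilde{P}$ has been chosen, and these local sections form a neighbourhood basis of $1_G$ that meets $\pi_1(P)$ only in $\{1\}$. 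This makes $G$ a locally compact group in which $N := \pi_1(P)$ is a discrete normal subgroup with $G/N \cong Q$, which is exactly the desired conclusion modulo compact presentability of $G$.

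To finish, I would check that $G$ is compactly presented. The action of $G$ on $\widetilde{P}$ is proper (the deck group acts properly discontinuously while $Q$ acts properly on $P$) and cocompact (it covers the cocompact $Q$-action), so a Svarc--Milnor-type argument gives that $G$ is compactly generated and quasi-isometric to $\widetilde{P}$. Since $\widetilde{P}$ is genuinely simply connected and all its cells have uniformly bounded diameter, it is coarsely simply connected; as coarse simple connectedness is a coarse invariant (Proposition \ref{csccoarseinv}), the space $(G,d_G)$ is coarsely simply connected as well, whence $G$ is compactly presented by Proposition \ref{cpco}.

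The genuine difficulties are topological rather than algebraic. First, the Rips complex of a \emph{non-discrete} group is not locally finite --- each vertex lies in continuum-many cells --- so one must confirm that covering-space theory and the Svarc--Milnor argument still apply; the complex is locally contractible, which suffices, but it may be cleaner to replace $Q$ by a discrete net of $(Q,d)$ while retaining enough of the action. Second, and this is where I expect the main obstacle to lie, one must verify that the extension $1\to \pi_1(P)\to G\to Q\to 1$ can indeed be topologized so that $G$ is locally compact and $N$ is discrete, equivalently that the defining cocycle is continuous; this is precisely the locally compact analogue of forming a covering group, and checking local triviality carefully is the crux of the proof.
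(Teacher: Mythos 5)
Your strategy is the geometric shadow of the actual proof, and you have correctly located the crux; but the step you defer --- topologizing $G$ so that it is locally compact with $N=\pi_1(P)$ discrete --- is essentially the entire content of the proposition, and the specific mechanism you propose for it fails. First, the action of $Q$ on $P=\Rips_c^2(Q,d_S)$ is not continuous: the vertex inclusion $Q\hookrightarrow P$ sends distinct nearby elements of a non-discrete group to distinct vertices at simplicial distance $1$, so $Q$ acts on $P$ only as an abstract group of simplicial automorphisms. Consequently there is no topological ``partial action'' of a neighbourhood of $1\in Q$ on $\widetilde P$ to which covering-space lifting theory applies. Second, ``a small simply connected neighbourhood of $1$ in $Q$'' need not exist: the groups for which the proposition has real content (e.g.\ $\K^2\rtimes\SL_2(\K)$ of Proposition \ref{H(K)sSL2(K)}) are totally disconnected, so neighbourhoods of $1$ are not even connected. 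What actually makes a local section $\sigma$ over a small $U\ni 1$ multiplicative is not topology but the $2$-simplices: for $c\ge 2$ and $g,h\in U\subset S$, the triple $(x_0,gx_0,ghx_0)$ is pairwise within $c$ and hence bounds in $\Rips_c^2$, so the local cocycle is trivial; one must then verify by hand that the sets $\sigma(U)$ form a neighbourhood basis making $G$ a topological group. That verification is exactly what is missing from your sketch.

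The proof behind the paper's citation (Corollary 8.A.17 of \cite{CH--16}) carries out this step algebraically, bypassing the complex altogether: take a compact symmetric generating subset $S$ that is a neighbourhood of $1$, and set $G=\langle S\mid R\rangle$ where $R$ consists of the relators of length at most $3$ valid in $Q$ (words $stu^{-1}$ with $s,t,u\in S$ and $st=u$ in $Q$). These length-$3$ relators are the algebraic avatar of your $2$-simplices: they force the tautological section over $S$ to be locally multiplicative, which is what allows one to transport the topology of $Q$ to $G$ and conclude that the kernel is discrete. Two dividends of this route that your sketch forgoes: compact presentability of $G$ is immediate from the visibly bounded presentation, with no Svarc--Milnor or coarse-simple-connectedness detour; and one avoids having to prove that the deck group acts metrically properly on $\widetilde P$ --- i.e.\ that only finitely many elements of $\pi_1(P)$ displace a basepoint a bounded distance --- which is not obvious for a complex with uncountably many cells, yet is needed for your quasi-isometry $G\to\widetilde P$, in a Svarc--Milnor theorem that (for the continuity reason above) would in any case have to be the coarse, continuity-free version.
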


Note that, by Proposition \ref{modn}, 
$Q$ is compactly presented if and only if $N$ is finitely generated as a normal subgroup.

In the case of totally disconnected groups, this can be refined.

\begin{defn}
A {\bf tree-like} locally compact group is a locally compact group 
admitting a proper cocompact action on some tree of bounded valency.
\end{defn} 
 
If the group is assumed to be compactly generated, it can be shown that ``cocompact" 
can be removed from the definition. 
Note that a finitely generated group is tree-like if and only if it is virtually free.

\begin{thm}
\label{quotienttreelike}
Every compactly generated, totally disconnected 
(or more generally, which a compact unit component) locally compact group 
is isomorphic to the quotient of some tree-like locally group by some discrete normal subgroup.
\end{thm}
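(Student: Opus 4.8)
The plan is to realise $Q$ geometrically on a locally finite graph and then pass to its universal cover, which is automatically a tree of bounded valency. First I would build a Cayley--Abels graph for $Q$. Since $Q$ is compactly generated with compact identity component, van Dantzig's theorem (applied to the totally disconnected group $Q/Q^\circ$, together with the fact that an extension of a compact open subgroup of $Q/Q^\circ$ by the compact $Q^\circ$ is itself compact and open) provides a compact open subgroup $U\le Q$. Choosing a compact generating set of the form $U\cup S$ with $S$ finite and symmetric, I form the graph $\mathcal{G}$ with vertex set $Q/U$ and an edge joining $gU$ to $hU$ whenever $g^{-1}h\in USU$. Then $\mathcal{G}$ is connected (as $U\cup S$ generates $Q$), locally finite (as $USU$ is compact, hence a finite union of left $U$-cosets), and the action of $Q$ by left translation is vertex-transitive with compact vertex stabilisers (the conjugates of $U$). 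In particular this action is proper and cocompact, and $\mathcal{G}$ has bounded valency.

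Next I would take the universal covering $p:T\to\mathcal{G}$. Being the universal cover of a connected graph, $T$ is a tree, and since $p$ is a local isomorphism of graphs, $T$ inherits the valency bound of $\mathcal{G}$. To lift the action I form the pullback group
\[
G=\{(q,\widetilde q)\in Q\times\mathrm{Aut}(T)\ :\ p\circ\widetilde q=q\circ p\},
\]
topologised as a closed subgroup of the product of the two totally disconnected locally compact groups $Q$ and $\mathrm{Aut}(T)$. Because $T$ is simply connected, the lifting criterion shows that every $q\in Q$ admits a lift $\widetilde q\in\mathrm{Aut}(T)$, so the projection $G\to Q$, $(q,\widetilde q)\mapsto q$, is surjective; its kernel is exactly the group of deck transformations of $p$, namely $\pi_1(\mathcal{G})$, which is free. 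I set $N=\pi_1(\mathcal{G})$, a normal subgroup with $G/N\cong Q$ as abstract groups.

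It then remains to verify that $G$ is tree-like and that $N$ is discrete. Acting on $T$ via $(q,\widetilde q)\mapsto\widetilde q$, the stabiliser $G_v$ of a vertex $v$ sits inside $Q_{p(v)}\times\mathrm{Aut}(T)_v$, a product of two compact groups, so $G_v$ is compact and open (as $\mathrm{Aut}(T)_v$ is open); hence $G$ is a totally disconnected locally compact group acting properly on $T$. Cocompactness follows because the $G$-orbits on $T$ project bijectively onto the finitely many $Q$-orbits on $\mathcal{G}$, using that $\pi_1(\mathcal{G})$ permutes each fibre of $p$ transitively. Thus $G$ is tree-like. Since the deck group acts freely, $N\cap G_v=\{1\}$, so $N$ is discrete. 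Finally $G$, acting properly and cocompactly on the connected tree $T$, is compactly generated, hence $\sigma$-compact, so the open mapping theorem promotes the continuous surjection $G\to Q$ to a topological isomorphism $G/N\cong Q$.

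The main obstacle is topological bookkeeping rather than the combinatorial idea. The delicate point is that the $G$-action on $T$ need not be faithful: its kernel is the compact normal subgroup of pairs $(q,\mathrm{id})$ with $q$ in the (compact) kernel of the action of $Q$ on $\mathcal{G}$. This is precisely why I carry the $Q$-coordinate along in the pullback rather than working inside $\mathrm{Aut}(T)$ alone, so that $G/N$ is $Q$ on the nose and not merely $Q$ up to a compact group. The remaining care goes into the compactness of vertex stabilisers and the openness of $G\to Q$, both of which reduce to standard facts about automorphism groups of locally finite trees once the pullback topology is fixed. As a byproduct this refines Proposition~\ref{cgquotientcp}: a tree-like group acts properly cocompactly on a contractible tree, hence is coarsely simply connected (by Proposition~\ref{csccoarseinv}) and therefore compactly presented by Proposition~\ref{cpco}.
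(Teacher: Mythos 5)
Your proof is correct, and it is essentially the argument behind the paper's citation (Theorem 8.A.20 of \cite{CH--16}): construct a Cayley--Abels graph for $Q$ via van Dantzig, pass to its universal covering tree of bounded valency, and lift the action by forming the fibered product of $Q$ with $\mathrm{Aut}(T)$, so that the quotient by the discrete deck group $N\cong\pi_1(\mathcal{G})$ is $Q$ exactly rather than $Q$ modulo the compact kernel of the action on the graph. Your topological bookkeeping (compact open vertex stabilisers, properness and cocompactness, discreteness of $N$, and the open mapping theorem for the $\sigma$-compact group $G$) is the same as in the cited proof and is complete.
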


\noindent
\textbf{On proofs.} For Propositions
\ref{csccoarseinv}, 
\ref{cpco}, 
\ref{modn}, 
\ref{cgquotientcp}, 
and Theorem \ref{quotienttreelike}, 
see \cite{CH--16}, respectively 
Propositions 6.A.7, 8.A.3,  8.A.10, Corollary 8.A.17, 
and Theorem 8.A.20.
Cayley graphs hardly appear in \cite{CH--16},
but proving Proposition \ref{cpviaCayley} 
is an easy exercise.

\section{The Bieri-Strebel Theorem}
\label{bst} 

Let $H$ be a locally compact group, $K, L$ two open subgroups,
and $\varphi : K \overset{\simeq}{\longrightarrow} L$ an isomorphism
of topological groups. 
On the resulting HNN-extension
\begin{equation*}  
\HNN(H, K, L, \varphi) \, = \, 
\langle H, t \mid t k t^{-1} = \varphi(k) \hskip.2cm \forall \hskip.1cm k \in K \rangle ,
\end{equation*}
there exists a unique topology making it
a topological group in which $H$ is an open subgroup;
moreover, this topology is locally compact
(if necessary, see Proposition 8.B.10 in \cite{CH--16}).
A locally compact group $G$ 
\textbf{splits as an HNN-extension} over an open subgroup $H$
if there exist $K, L, \varphi$ as above such that, as a pair of topological groups, 
$(G, H)$ is isomorphic to $(\HNN(H, K, L, \varphi), H)$.

\begin{thm}
\label{BSthm}
Let $G = G_\infty$ be a compactly generated locally compact group 
with a continuous homomorphism $\pi=\pi_\infty$ of $G$ onto $\Z$. 
Then there exist
\begin{itemize}
\item 
a sequence $(G_n)_{n\ge 0}$ of locally compact groups, 
with surjective continuous homomorphisms $\pi_n:G_n\to\Z$,
\item 
surjective continuous homomorphisms $\varphi_{m,n} : G_m \to G_n$ 
with discrete kernels, for $m \le n \le \infty$,
\end{itemize}
such that
\begin{itemize}
\item
the $\varphi_{m,n}$ are compatible with each other 
($\varphi_{m,n} \circ \varphi_{\ell,m} = \varphi_{\ell,n}$ for all $\ell \le m \le n \le \infty$) 
and compatible with the projections 
($\pi_m = \pi_n \circ \varphi_{m.n}$ for all $m \le n \le \infty$), 
\item 
$G_n$ splits as an HNN-extension 
over some compactly generated open subgroup of $\mathrm{Ker}(\pi_n)$,
for all $n < \infty$.
\end{itemize}
\end{thm}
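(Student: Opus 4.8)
The plan is to realize $G$ as a limit of HNN-extensions coming from an exhaustion of the kernel. Write $N=\ker\pi$. Since $\Z$ is discrete, $N=\pi^{-1}(0)$ is an open (hence closed, hence locally compact) subgroup; and since $\Z$ is free, $\pi$ splits, so choosing $t\in G$ with $\pi(t)=1$ gives $\langle t\rangle\cap N=\{1\}$ and an internal decomposition $G=N\rtimes_\alpha\langle t\rangle$, with $\langle t\rangle\cong\Z$ discrete and $\alpha=\operatorname{conj}_t\in\operatorname{Aut}(N)$. First I would replace a compact symmetric generating set of $G$ by one of the form $S\cup\{t^{\pm1}\}$ with $S\subset N$ compact (correct each generator by the power of $t$ prescribed by its image under $\pi$, which takes only finitely many values on a compact set); a routine rewriting then yields $N=\langle\alpha^k(S):k\in\Z\rangle$.

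Next I would fix a compact identity neighbourhood $V\subset N$ and set $M_0=\langle S\cup V\rangle$, an open compactly generated subgroup with $\langle\alpha^j(M_0):j\in\Z\rangle=N$, and then
\[
M_n=\langle\alpha^j(M_0):|j|\le n\rangle .
\]
These are open compactly generated subgroups of $N$ with $M_n\subset M_{n+1}$, $\bigcup_nM_n=N$, and $\alpha^{\pm1}(M_n)\subset M_{n+1}$. I would define
\[
G_n=\HNN\bigl(M_n,\,K_n,\,L_n,\,\alpha|_{K_n}\bigr),\qquad K_n=M_n\cap\alpha^{-1}(M_n),\quad L_n=\alpha(K_n)=M_n\cap\alpha(M_n),
\]
where $K_n,L_n$ are open in $M_n$ and $\alpha|_{K_n}:K_n\to L_n$ is a topological isomorphism; by the topological HNN-construction this $G_n$ is locally compact with $M_n$ open. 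Sending the base to $0$ and the stable letter $\tau_n$ to $1$ defines $\pi_n:G_n\to\Z$, and $G_n$ splits by construction as an HNN-extension over the compactly generated open subgroup $M_n\subset\ker\pi_n$.

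It remains to build the morphisms. For $m\le n<\infty$ I would let $\varphi_{m,n}$ be induced by $M_m\hookrightarrow M_n$ and $\tau_m\mapsto\tau_n$; the HNN-relations are respected because $K_m\subset K_n$ and $\alpha$ is unchanged, so $\varphi_{m,n}$ is a well-defined continuous homomorphism, and for $n=\infty$ I would let $\varphi_{n,\infty}:G_n\to G$ be induced by $M_n\hookrightarrow N$ and $\tau_n\mapsto t$ (the relation $\tau_nk\tau_n^{-1}=\alpha(k)$ becomes $tkt^{-1}=\alpha(k)$, valid in $G$). Compatibility of the $\varphi$'s and the identities $\pi_m=\pi_n\circ\varphi_{m,n}$ are immediate on generators. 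For surjectivity I would use that $\alpha^j(M_0)\subset K_n$ for $-n\le j\le n-1$, so that $\tau_n\alpha^j(M_0)\tau_n^{-1}=\alpha^{j+1}(M_0)$: conjugating $M_m$ up and down by $\tau_n^{\pm1}$ recovers all of $M_n$, whence $\operatorname{im}\varphi_{m,n}\supseteq\langle M_n,\tau_n\rangle=G_n$, and likewise $\langle M_n,t\rangle\supseteq\langle M_0,t\rangle=G$. Finally each of these maps restricts, on the open base subgroup of its domain, to an injective homomorphism ($M_m\hookrightarrow G_n$ via the embedding of the base, resp.\ $M_n\hookrightarrow G$), so its kernel meets an open subgroup trivially and is hence discrete.

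The hard part is the topological HNN machinery on which everything rests: one needs that $\HNN(M_n,K_n,L_n,\alpha)$ admits a unique locally compact topology in which the base $M_n$ is open, and---decisive for the discreteness of the kernels---that the base embeds, i.e.\ a topological analogue of Britton's lemma guaranteeing injectivity of $M_m\hookrightarrow G_n$. This is exactly the input provided by Proposition 8.B.10 of \cite{CH--16}. The only remaining delicate point is the bookkeeping for the exhaustion $\{M_n\}$: arranging simultaneously openness, compact generation, the shift inclusions $\alpha^{\pm1}(M_n)\subset M_{n+1}$, and the property that $M_0$ already generates $N$ under $\alpha$---the last being what forces every $\varphi_{m,n}$, and every $\varphi_{n,\infty}$, to be surjective.
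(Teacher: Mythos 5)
Your proposal is correct and follows essentially the same route as the proof this paper points to (Theorem 8.C.8 in \cite{CH--16}, resting on the topological HNN machinery of Proposition 8.B.10): normalize a compact generating set to the form $S\cup\{t^{\pm1}\}$ with $S\subset\ker\pi$, exhaust $\ker\pi$ by open compactly generated subgroups $M_n$ stable under the shift $\alpha^{\pm1}(M_n)\subset M_{n+1}$, and take the induced HNN-extensions with the evident maps. All the verifications you outline (well-definedness via the universal property, surjectivity via $\tau_n$-conjugation of $M_0$, discreteness of kernels from injectivity on the open base subgroup) go through as stated.
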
 
 
This theorem is an approximation theorem. 
Note that, when $\mathrm{Ker}(\pi)$ is compactly generated, 
it is an empty statement, since we can choose $G_n=G$ for all $n$. 
However, it provides useful information 
when $\mathrm{Ker}(\pi)$ is not assumed to be compactly generated.
When $G$ is compactly presented,
then $\varphi_{\infty,n}$ has to be an isomorphism for some $n < \infty$, which provides
a version for locally compact groups of a theorem of Bieri and Strebel
(see \cite[Theorem A]{BiSt--78} and \cite[Proposition I.3.2]{Abel--87}):

\begin{cor}[Bieri-Srebel splitting theorem]
\label{BSsplits}
Let $G$ be a compactly presented locally compact group 
along with a continuous homomorphism $\pi$ of $G$ onto $\Z$. 
\par
Then $G$ splits as an HNN-extension 
over some compactly generated open subgroup of $\mathrm{Ker}(\pi)$.
\end{cor}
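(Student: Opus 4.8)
The plan is to deduce the splitting directly from the approximation Theorem \ref{BSthm}, the whole point being that compact presentability forces the approximating tower to stabilise at a finite stage. Applying Theorem \ref{BSthm} to the pair $(G,\pi)=(G_\infty,\pi_\infty)$, I obtain groups $G_n$ with surjective continuous projections $\pi_n:G_n\to\Z$ and compatible surjective continuous homomorphisms $\varphi_{n,\infty}:G_n\to G$ with discrete kernels, each $G_n$ (for $n<\infty$) splitting as an HNN-extension over a compactly generated open subgroup of $\mathrm{Ker}(\pi_n)$. The corollary will follow once I show that $\varphi_{n,\infty}$ is an isomorphism of topological groups for some finite $n$: the HNN-splitting of that single $G_n$ is then transported across the isomorphism.

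The heart of the matter is this stabilisation. The structural maps present $G$ as a filtered limit of the $G_n$ along the surjective transition maps $\varphi_{m,n}$, all sharing a fixed compact generating set $S$, namely the common image of the compact generators of $G$. Since $G$ is compactly presented, I may write $G=\langle S\mid R\rangle$ with $R$ a set of relators of bounded length in $S$. Words of bounded length in the compact set $S$ form a compact space, and the condition that a given word become trivial is satisfied at some finite stage; lifting $S$ to a common finite stage and then using compactness of the bounded relator family to realise all relations of $R$ at a finite stage $n$, I obtain a continuous homomorphism $s:G\to G_n$ with $\varphi_{n,\infty}\circ s=\mathrm{id}_G$. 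Because each $G_n$ is generated by the lifts of the fixed generating set $S$, this section $s$ is moreover surjective, so $\varphi_{n,\infty}$ is an isomorphism.

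It remains to transport the structure. Fixing such an $n<\infty$, compatibility of the maps with the projections gives $\pi_n=\pi\circ\varphi_{n,\infty}$, so $\varphi_{n,\infty}$ carries $\mathrm{Ker}(\pi_n)$ isomorphically onto $\mathrm{Ker}(\pi)$ and sends open subgroups to open subgroups. Writing the splitting of $G_n$ as $(G_n,H_n)\cong(\HNN(H_n,K_n,L_n,\psi),H_n)$, with $H_n$ a compactly generated open subgroup of $\mathrm{Ker}(\pi_n)$, and applying the isomorphism $\varphi_{n,\infty}$, I exhibit $(G,\varphi_{n,\infty}(H_n))$ as an HNN-extension over the compactly generated open subgroup $\varphi_{n,\infty}(H_n)$ of $\mathrm{Ker}(\pi)$, which is exactly the desired conclusion.

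The main obstacle is precisely the stabilisation step of the second paragraph: converting compact presentability into a genuine isomorphism at a finite stage. In the discrete case this is the familiar observation that a finitely presented quotient of a fixed free group by an increasing union of relators is already reached after finitely many of them. Here one must additionally control the topology—verifying that the compact generating set lifts to a single finite stage and that the bounded-length, hence compact, family of relators is normally generated at a finite stage, even though the kernels $\mathrm{Ker}(\varphi_{m,n})$ are only discrete rather than trivial. This is where the compactness of $S$ and the boundedness of $R$ are indispensable.
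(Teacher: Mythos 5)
Your proposal follows exactly the paper's route: the paper deduces Corollary \ref{BSsplits} from Theorem \ref{BSthm} by precisely the remark you elaborate, namely that compact presentability forces $\varphi_{n,\infty}$ to be an isomorphism for some finite $n$, after which the HNN-splitting of that $G_n$ is transported to $G$ (the detailed stabilisation argument is deferred to Theorem 8.C.3 of \cite{CH--16}). Your section-building version of the stabilisation step is sound modulo the approximation property implicit in the tower (each element of $\mathrm{Ker}(\varphi_{n,\infty})$ dies at a finite stage), and could be streamlined using Proposition \ref{modn}(3): the discrete kernel of $G_0\to G$ is compactly, hence finitely, generated as a normal subgroup, and its finitely many normal generators vanish at some finite stage.
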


Let $G = \HNN (H,K,L, \varphi)$ be an HNN-extension as above.
In the particular case of $K=H$, 
the subgroup $N := \bigcup_{n \ge 0} t^{-n}Ht^n$ is open in $G$,
the endomorphism $\varphi$ of $H$ extends to an automorphism $\alpha$ of $N$
by $\alpha (x) = txt^{-1}$ for all $x \in N$,
and $G$ is naturally isomorphic to the semi-direct product
$N \rtimes_\alpha \Z$, where $n \in \Z$ acts on $N$ by $\alpha^n$.

When, on the contrary, the HNN-extension is
\textbf{non-ascending}, i.e.\ when $K \ne H \ne L$,
then $G$ contains a non-abelian discrete free subgroup;
hence it follows from Corollary \ref{BSsplits} that:

\begin{cor}
\label{hnna} 
Let $G = N \rtimes_\alpha \Z$ be a compactly presented locally compact group, 
with $\Z$ acting through powers of some topological group automorphism $\alpha$ of $N$.  
Assume that $G$ has no non-abelian discrete free subgroup. 
\par

Then one of $\alpha, \alpha^{-1}$ engulfs $N$ 
into some compactly generated open subgroup of $N$.
\end{cor}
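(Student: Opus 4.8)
The plan is to derive Corollary \ref{hnna} from Corollary \ref{BSsplits} by unwinding what it means for the semidirect product $G = N \rtimes_\alpha \Z$ to split as an HNN-extension. First I would set up the canonical projection $\pi : G \to \Z$ sending $N$ to $0$ and a chosen generator of $\Z$ to $1$; this is a continuous surjective homomorphism with $\mathrm{Ker}(\pi) = N$. Since $G$ is compactly presented, Corollary \ref{BSsplits} applies and yields a splitting of $G$ as an HNN-extension $\HNN(H, K, L, \varphi)$ over some compactly generated open subgroup $H$ of $N$, with associated subgroups $K, L \subset H$ and isomorphism $\varphi : K \to L$.

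The next step is to feed in the hypothesis that $G$ has no non-abelian discrete free subgroup. As recalled in the paragraph preceding the corollary, a non-ascending HNN-extension (one with $K \ne H \ne L$) contains a non-abelian discrete free subgroup; this is the group-theoretic heart of the matter, resting on the classical ping-pong / Britton's lemma phenomenon for HNN-extensions. Therefore the splitting we obtained must be \emph{ascending}, i.e.\ either $K = H$ or $L = H$. Replacing $t$ by $t^{-1}$ interchanges the roles of $K$ and $L$ (and replaces $\varphi$ by $\varphi^{-1}$, and correspondingly $\alpha$ by $\alpha^{-1}$), so after possibly inverting the stable letter I may assume $K = H$, i.e.\ the extension is ascending with stable subgroup $H$.

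Finally I would translate the ascending condition back into the statement about $\alpha$. As the excerpt explains, when $K = H$ the stable letter $t$ conjugates $H$ into itself via $\varphi$, and more precisely the HNN-generator realizes $\alpha$ (or $\alpha^{-1}$, depending on the normalization of $t$ versus the generator of $\Z$) as the ambient conjugation on the larger subgroup $N = \bigcup_{n \ge 0} t^{-n} H t^n$. Since $H$ is a compactly generated open subgroup of $N$ and $\varphi$ sends $H = K$ isomorphically into $L \subset H$, one of $\alpha, \alpha^{-1}$ maps $H$ into itself, which is exactly the assertion that it ``engulfs'' $N$ into the compactly generated open subgroup $H$. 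The main obstacle is bookkeeping rather than conceptual: one must check that the splitting produced by Corollary \ref{BSsplits}, whose stable letter a priori only projects to $\pm 1$ under $\pi$, can be matched up with the given semidirect-product structure $N \rtimes_\alpha \Z$ so that the HNN stable letter corresponds to $\alpha^{\pm 1}$, and that ``$K = H$'' becomes precisely ``$\alpha^{\pm 1}$ engulfs $N$ into $H$.'' I expect the identification of $\alpha$ with the conjugation action associated to the splitting, together with the sign/orientation tracking between $t$ and the chosen generator of $\Z$, to be the delicate point.
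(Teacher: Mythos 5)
Your route is the one the paper intends: apply Corollary \ref{BSsplits} to the canonical projection $\pi\colon G\to\Z$ with $\mathrm{Ker}(\pi)=N$, rule out a non-ascending splitting because it would produce a non-abelian discrete free subgroup, and read off engulfing from the ascending HNN-structure. The first two steps are sound, and part of the bookkeeping does work out as you hope: since the base $H$ lies in $\mathrm{Ker}(\pi)$, the map $\pi$ factors through $G/\langle\langle H\rangle\rangle\cong\Z$, which is generated by the image of the stable letter, so $\pi(t)=\pm1$; and in the ascending case ($K=H$, after possibly inverting $t$) the normal form shows $\mathrm{Ker}(\pi)=\bigcup_{n\ge 0}t^{-n}Ht^{n}$, so this union is indeed the given $N$.

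However, your last step has a genuine gap, and it is not mere ``sign/orientation tracking''. The stable letter $t$ agrees with the chosen generator $s$ of the $\Z$-factor only \emph{modulo} $N$: writing $t=us^{\eps}$ with $u\in N$ and $\eps\in\{1,-1\}$, conjugation by $t$ on $N$ is $\beta:=\mathrm{inn}_u\circ\alpha^{\eps}$, not $\alpha^{\eps}$ (here $\mathrm{inn}_u(x)=uxu^{-1}$). What the ascending splitting gives is that $\beta$ engulfs $N$ into $H$; your claim that ``one of $\alpha,\alpha^{-1}$ maps $H$ into itself'' is false in general for this $H$, and engulfing is not formally invariant under composition with an inner automorphism --- which is precisely why \cite{CH--16} isolates this conversion as a separate statement (Corollary 8.C.19, the one invoked in the paper's argument for Proposition \ref{solt}). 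The gap is repairable: the chain $H\subset\beta^{-1}(H)\subset\beta^{-2}(H)\subset\cdots$ is increasing with union $N$, so $u\in H_1:=\beta^{-m}(H)$ for some $m$; then $H_1$ is again open and compactly generated (it is the image of $H$ under a topological automorphism of $N$), $\beta(H_1)\subset H_1$, and $\alpha^{\eps}(H_1)=u^{-1}\beta(H_1)u\subset H_1$ because $u\in H_1$. Moreover $\alpha^{\eps n}=\mathrm{inn}_{w_n}\circ\beta^{n}$ with $w_n=u^{-1}\beta(u^{-1})\cdots\beta^{n-1}(u^{-1})\in H_1$, so whenever $\beta^{n}(y)\in H$ one gets $\alpha^{\eps n}(y)=w_n\beta^{n}(y)w_n^{-1}\in H_1$; hence $\bigcup_{n\ge 1}\alpha^{-\eps n}(H_1)=N$, and $\alpha^{\eps}$ engulfs $N$ into $H_1$. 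With this correction supplied, your argument is complete and coincides with the paper's (sketched) proof.
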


We have used:

\begin{defn}
An automorphism $\alpha$ of a group $N$ \textbf{engulfs} $N$ into a subgroup $H$ of $N$
if $\alpha(H) \subset H$ and $\bigcup_{n \ge 1} \alpha^{-n}(H) = N$.
\end{defn}

\noindent
\textbf{On proofs.}  
For Theorem \ref{BSthm}, 
Corollary \ref{BSsplits}, 
Corollary \ref{hnna}, 
see \cite{CH--16}, respectively
Theorems 8.C.8, 8.C.3, and Proposition 8.C.18.

\section{Examples}
\label{sexam}

In this section, we provide various examples of locally compact groups
that are compactly presented, and some that are not.

\begin{prop}
\label{ggci}
If $G$ is a locally compact group and $G/G^\circ$ is compact, then $G$ is compactly presented. 
\end{prop}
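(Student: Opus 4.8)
The plan is to reduce to the case of Lie groups via the structure theory of almost connected locally compact groups, and then to assemble the pieces using the extension property of Proposition \ref{modn}(2). First I would invoke the solution of Hilbert's fifth problem (Gleason--Yamabe): since $G/G^\circ$ is compact, $G$ is almost connected, and hence admits a compact normal subgroup $K$ such that $L := G/K$ is a Lie group. Because $G/G^\circ$ is compact while the component group of any Lie group is discrete, the component group of $L$ is both compact and discrete, hence finite; thus $L$ is a Lie group with finitely many connected components.

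Next I would dispose of the two easy ingredients. A compact group is compactly presented: one uses the multiplication-table presentation $\langle K \mid st(st)^{-1},\ s,t\in K\rangle$, whose relators have length $3$ and whose generating set $K$ is compact. For $L$, I consider the extension with kernel $L^\circ$ and quotient $L/L^\circ$; the quotient is finite, hence (finitely, a fortiori) compactly presented, so by Proposition \ref{modn}(2) it will suffice to establish that the connected Lie group $L^\circ$ is compactly presented.

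The connected Lie group case is the heart of the argument, and I would treat it geometrically. A connected Lie group $G_0$ acts properly and cocompactly on the homogeneous space $G_0/C$, where $C$ is a maximal compact subgroup; by the Cartan--Iwasawa--Malcev theorem this space is diffeomorphic to a Euclidean space, in particular simply connected. Passing to $G_0/C$ rather than working with $G_0$ itself is essential, since $G_0$ need not be simply connected as a manifold while its coarse geometry is that of the contractible space $G_0/C$. By homogeneity the fillings of loops are uniformly controlled, so $G_0/C$ is coarsely simply connected; and since the action is geometric, $G_0$ is quasi-isometric, hence coarsely equivalent, to $G_0/C$ (a Milnor--\v{S}varc argument using properness and cocompactness). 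Coarse invariance of coarse simple connectedness (Proposition \ref{csccoarseinv}) together with the metric characterization of compact presentability (Proposition \ref{cpco}) then shows $G_0$ is compactly presented.

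Finally I would assemble: applying Proposition \ref{modn}(2) to the extension of $L/L^\circ$ by $L^\circ$ gives that $L=G/K$ is compactly presented, and a second application to the extension of $L$ by the compact group $K$ yields that $G$ is compactly presented. The main obstacle is the connected Lie group step, and specifically the verification that $G_0/C$ is coarsely simply connected with fillings bounded uniformly in diameter; this is exactly where homogeneity and the contractibility of the symmetric space do the work, and it must be combined with the identification of $G_0$ with $G_0/C$ up to quasi-isometry.
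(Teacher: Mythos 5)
Your proposal is correct and takes essentially the same route the paper intends (via \cite{CH--16}, Proposition 8.A.13, as signalled by the remark following the statement): reduce by Gleason--Yamabe to a Lie group with finitely many components, exploit the proper transitive isometric action on the contractible homogeneous space $G_0/C$ given by Cartan--Iwasawa--Malcev, and conclude by coarse invariance of coarse simple connectedness together with Propositions \ref{csccoarseinv} and \ref{cpco}. The only cosmetic differences are that you assemble the compact kernel and the finite component group through two applications of Proposition \ref{modn}(2) --- using the bounded multiplication-table presentation of a compact group --- where the paper instead lifts the action directly to $G$, and that your appeal to homogeneity for uniformly controlled fillings is unnecessary, since the definition of coarse simple connectedness requires no uniform bound on filling area.
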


In the situation of Proposition \ref{ggci}, 
$G$ can be shown to admit a proper transitive continuous action 
by isometries on some Riemannian manifold homeomorphic to some Euclidean space.

\begin{prop}
\label{exnilp}
Every nilpotent compactly generated locally compact group is compactly presented. 
\par
More generally, a locally compact group that is compactly generated
and of polynomial growth is compactly presented.
\end{prop}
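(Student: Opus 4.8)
The plan is to prove the more general statement about polynomial growth, after recording that every compactly generated nilpotent locally compact group has polynomial growth (so the nilpotent assertion is a special case); for the nilpotent case one can also argue directly, without invoking that growth estimate, as I indicate below. The strategy is to exhibit $G$ as a short iterated extension whose building blocks are already known to be compactly presented, and then to glue them using the extension-stability of compact presentability recorded in Proposition \ref{modn}(2).

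The key structural input is the Losert structure theorem (built on Gleason--Yamabe): a compactly generated locally compact group $G$ of polynomial growth admits a \emph{compact} normal subgroup $W$ such that the quotient $G/W$ is a Lie group. I would take this as given. Since $W$ is compact, $W/W^\circ$ is compact, so $W$ is compactly presented by Proposition \ref{ggci}; applying Proposition \ref{modn}(2) to the extension $1 \to W \to G \to G/W \to 1$, it then suffices to prove that the Lie group $G/W$ is compactly presented. This reduces the problem to the case where $G$ is itself a Lie group of polynomial growth.

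For a Lie group $G/W$ the identity component $(G/W)^\circ$ is open, so $\Gamma := (G/W)/(G/W)^\circ$ is discrete, and being a quotient of the compactly generated group $G/W$ it is finitely generated. I would treat the two ends of $1 \to (G/W)^\circ \to G/W \to \Gamma \to 1$ separately. The group $(G/W)^\circ$ is connected, hence almost connected, hence compactly presented by Proposition \ref{ggci}. For $\Gamma$: polynomial growth is inherited by compactly generated quotients (the quotient map being coarsely Lipschitz and essentially surjective), so $\Gamma$ has polynomial growth; by Gromov's theorem it is virtually nilpotent, hence virtually polycyclic, hence finitely presented, i.e.\ compactly presented. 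In the purely nilpotent case, $\Gamma$ is directly a finitely generated nilpotent group, hence polycyclic and finitely presented, so Gromov's theorem is not needed. A second application of Proposition \ref{modn}(2) to this extension shows $G/W$ is compactly presented, completing the reduction.

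The main obstacle is the structural reduction to a Lie quotient, that is, producing the compact normal subgroup $W$ with $G/W$ a Lie group; this is the only genuinely deep ingredient and is where the polynomial-growth hypothesis really enters. Once it is in hand, the remainder is a formal assembly from Propositions \ref{ggci} and \ref{modn} together with the classical finite presentability of virtually polycyclic groups. The one routine point I would verify carefully is that polynomial growth descends to the discrete quotient $\Gamma$, so that Gromov's theorem indeed applies there.
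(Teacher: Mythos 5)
Your proposal is correct in substance, but it takes a genuinely different route from the paper. The paper's proof is a one-step coarse-invariance argument: it invokes Breuillard's structure theorem \cite[Theorem 1.2]{Breu--14} to produce a closed cocompact subgroup $H$ of $G$ together with a proper homomorphism with cocompact image of $H$ into a connected, simply connected, solvable Lie group $L$; then $L$ is compactly presented by Proposition \ref{ggci}, the two maps $H\hookrightarrow G$ and $H\to L$ are coarse equivalences by Proposition \ref{GroupsHomosCoarseIso}, and compact presentability is a coarse invariant among $\sigma$-compact groups by Proposition \ref{cpco}, so $G$ is compactly presented with no use of Proposition \ref{modn} at all. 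You instead use Losert's compact-by-Lie theorem ($G$ has a compact normal $W$ with $G/W$ a Lie group, possibly with infinitely many components) and reassemble $G$ algebraically by two applications of Proposition \ref{modn}(2), treating the identity component via Proposition \ref{ggci} and the discrete component group $\Gamma$ via Gromov's polynomial growth theorem. Your route buys independence from Breuillard's refinement and from the coarse machinery of Proposition \ref{cpco}, at the cost of explicitly invoking Gromov's theorem and the descent of growth to quotients; the paper's route showcases exactly the coarse-geometric apparatus the survey develops. The deep inputs are ultimately comparable, since Breuillard's theorem itself builds on Losert's, which in turn uses Gromov's theorem (not only Gleason--Yamabe, as your attribution suggests -- so Gromov is not really avoidable on your route either, except in the purely nilpotent case, where, as you note, $\Gamma$ is directly finitely generated nilpotent, though there you still need Guivarc'h's polynomial growth estimate or a separate structure argument to get $W$).

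One justification should be repaired, precisely at the point you flag: polynomial growth does \emph{not} descend along arbitrary coarsely Lipschitz essentially surjective maps. For instance, enumerating the vertices of a trivalent tree and concatenating geodesic paths between consecutive ones yields a $1$-Lipschitz surjection from $\Z$ onto the tree, whose growth is exponential. What saves your step is that the projection is a \emph{homomorphism}, hence maps $n$-balls onto $n$-balls for compatible generating sets; since $(G/W)^\circ$ is open, one then bounds the number of points of $\Gamma$ in an $n$-ball by comparing with the Haar measure of a bounded thickening of the $n$-ball in $G/W$, using the equivalence of metric growth with Haar-measure growth mentioned after Theorem \ref{Gucp}. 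With that substitution, your argument is complete.
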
 

Indeed, given a locally compact group $G$ that is compacly generated
and of polynomial growth, it is shown in \cite[Theorem 1.2]{Breu--14} that there exists 
a closed and cocompact subgroup $H$ of $G$
and a proper homomorphism with cocompact image of $H$
into a connected Lie group $L$ (moreover L is simply connected, solvable, 
and of polynomial growth).
Since $L$ is compactly presented by Proposition \ref{ggci},
$G$ is compactly presented by Propositions \ref{cpco} and \ref{GroupsHomosCoarseIso}.

\vskip.2cm

Consider an ultrametric non-discrete locally compact field $\K$
(for example the field $\Q_p$ of $p$-adic numbers, for some prime $p$),
the Heisenberg group $H(\K)$ of triples of elements of $\K$,
with product defined by
$(x,y,t)(x',y',t') = (x+x', y+y', t+t'+xy'-x'y)$,
the action of $\SL_2(\K)$ on $H(\K)$ defined by
\begin{equation*}
\begin{pmatrix}
a & b \\
c & d 
\end{pmatrix}
(x,y,t) = (ax+by, cx+dy, t) ,
\end{equation*}
and the corresponding semi-direct product $G := H(\K) \rtimes \SL_2(\K)$.
The centre $Z$ of $G$ is isomorphic to that of $H(\K)$, 
i.e.\ to the additive group of $\K$,
and the quotient $G/Z$ is isomorphic to the natural semi-direct product
$\K^2 \rtimes \SL_2(\K)$.
It is easy to check that $G$ is compactly generated;
since $Z$ is not compactly generated,
it follows from Proposition \ref{modn}(3) that:

\begin{prop}
\label{H(K)sSL2(K)}
For every ultrametric non-discrete locally compact field $\K$,
the semi-direct product $\K^2 \rtimes \SL_2(\K)$
is compactly generated and is not compactly presented.
\end{prop}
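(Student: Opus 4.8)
The plan is to exhibit $\K^2 \rtimes \SL_2(\K)$ as the quotient $G/Z$ of the \emph{compactly generated} group $G = H(\K) \rtimes \SL_2(\K)$ by its centre $Z \cong (\K,+)$, and then to run Proposition~\ref{modn}(3) in contrapositive form. That proposition asserts that if $G$ is compactly generated and $G/N$ is compactly presented, then $N$ is compactly generated qua normal subgroup. Hence, to conclude that $\K^2 \rtimes \SL_2(\K) \cong G/Z$ is not compactly presented, it suffices to establish two facts: that $G$ is compactly generated, and that $Z$ is \emph{not} compactly generated qua normal subgroup of $G$. Compact generation of the quotient $\K^2 \rtimes \SL_2(\K)$ itself then comes for free, since the image of a compact generating set of $G$ generates $G/Z$.

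First I would verify compact generation of $G$. The factor $\SL_2(\K)$ is compactly generated: writing $\mathcal{O}$ for the valuation ring and $\pi$ for a uniformizer, the compact open subgroup $\SL_2(\mathcal{O})$ together with $\mathrm{diag}(\pi,\pi^{-1})$ generate $\SL_2(\K)$, because conjugating the elementary unipotents with entries in $\mathcal{O}$ by powers of $\mathrm{diag}(\pi,\pi^{-1})$ produces elementary unipotents with arbitrary entries in $\K$, and these generate $\SL_2(\K)$. Adjoining the compact subset $\{(x,y,t) : x,y,t \in \mathcal{O}\}$ of $H(\K)$, the same sweeping argument applied to the linear action of $\SL_2(\K)$ on the $(x,y)$-plane yields every $(x,0,0)$ and $(0,y,0)$ with $x,y \in \K$; products and commutators of these then recover all of $H(\K)$, including its centre, so the chosen compact set generates $G$.

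Next, $Z \cong (\K,+)$ is not compactly generated: as a topological group it is the strictly increasing union $\bigcup_{n} \pi^{-n}\mathcal{O}$ of its compact open subgroups, so any compact subset lies in some $\pi^{-n}\mathcal{O}$ and generates a subgroup contained in the proper subgroup $\pi^{-n}\mathcal{O}$. The passage from this to the statement needed is where centrality enters: for a central subgroup the normal closure in $G$ of any subset coincides with the ordinary subgroup it generates, so $Z$ is compactly generated qua normal subgroup if and only if it is compactly generated as a topological group, which it is not. Proposition~\ref{modn}(3) then forbids $\K^2 \rtimes \SL_2(\K)$ from being compactly presented, and the proposition is proved.

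The only step carrying genuine content is the compact generation of $G$; everything downstream is the short deduction above. The point to watch --- and the one subtlety I would flag --- is that the commutator identity used to reach the centre of $H(\K)$, namely $[(x,0,0),(0,y,0)] = (0,0,2xy)$, degenerates when $\mathrm{char}\,\K = 2$, where $H(\K)$ is in fact abelian and $Z$ splits off as a direct factor. Thus the clean argument above applies whenever $\mathrm{char}\,\K \ne 2$ --- in particular for every $p$-adic field, including the motivating example $\K = \Q_p$ --- while the equal-characteristic-$2$ fields $\mathbf{F}_{q}((t))$ would require a separate treatment of the compact-generation step.
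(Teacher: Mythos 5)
Your route is exactly the paper's: the authors also realize $\K^2 \rtimes \SL_2(\K)$ as $G/Z$ for $G = H(\K) \rtimes \SL_2(\K)$ with $Z \cong (\K,+)$ the centre, note that $G$ is compactly generated while $Z$ is not (centrality reducing ``qua normal subgroup'' to plain generation, as you say), and invoke Proposition~\ref{modn}(3); the paper leaves the compact generation of $G$ as ``easy to check'', which you supply.

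The one defect is your characteristic-$2$ caveat, and it is a genuine shortfall relative to the statement as given, which covers every ultrametric non-discrete locally compact field, including $\mathbf{F}_q(\!(t)\!)$ with $q$ even. But the restriction is an artifact of reaching the centre via commutators; the cocycle itself does the job in every characteristic. With the paper's product $(x,y,t)(x',y',t') = (x+x', y+y', t+t'+xy'-x'y)$ one has $(x,0,0)(0,y,0) = (x,y,xy)$ and $(x,y,0)^{-1} = (-x,-y,0)$, whence
\[
(x,y,0)^{-1}\bigl((x,0,0)(0,y,0)\bigr) = (0,0,xy),
\]
the third coordinate being $xy + (-x)y - x(-y) = xy$, with no factor of $2$. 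Since your $\SL_2(\K)$-sweep of $\mathcal{O}^3$ already yields every $(v,0)$ with $v \in \K^2$, and $xy$ ranges over all of $\K$ (take $y=1$), the full centre lies in the subgroup generated by your compact set in all characteristics. Note also that when $\mathrm{char}\,\K = 2$ and $H(\K)$ is abelian, $Z$ is still central in $G$ and the downstream deduction is untouched; moreover the splitting $H(\K) \cong \K^2 \times \K$ you mention is not $\SL_2(\K)$-equivariant, so nothing about $G$ itself degenerates. With this one-line substitution for the commutator identity, your argument proves the proposition in full generality and coincides with the paper's.
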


\begin{prop}
\label{phnna}
Let $H$ be a compactly presented locally compact group 
and $\varphi$ an injective continuous endomorphism of $H$ with open image. 
Then the ascending HNN-extension associated to $(H,\varphi)$ is compactly presented.
\end{prop}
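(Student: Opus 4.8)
The plan is to exhibit an explicit bounded presentation of the ascending HNN-extension $G = \HNN(H, H, \varphi(H), \varphi) = \langle H, t \mid tht^{-1} = \varphi(h)\ \forall h \in H\rangle$, obtained from a bounded presentation of $H$ by adjoining the stable letter $t$ together with relators of bounded length that encode the $\varphi$-action.

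First I would use the compact presentability of $H$ to fix a compact generating subset $S \subseteq H$ and a set $R$ of relators of bounded length with $H \cong \langle S \mid R\rangle$. Since $\varphi$ is continuous and $S$ is compact, the image $\varphi(S)$ is a compact subset of $H$; and because the word metric $d_S$ is adapted (Definition \ref{apd}), hence locally bounded, the compact set $\varphi(S)$ is bounded. Thus there is an integer $m$ and, for each $s \in S$, a word $w_s \in F_S$ of length at most $m$ representing $\varphi(s)$ in $H$.

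Next I would take $S \cup \{t\}$ as generating subset of $G$ — this is compact in $G$, since $H$ is an open subgroup (so the topology of $G$ restricts to that of $H$ on $S$) and $\{t\}$ is a single point — and claim the presentation
$$
G \;\cong\; \langle\, S,\ t \mid R,\ tst^{-1}w_s^{-1}\ (s \in S)\,\rangle .
$$
The only substantive point is that the relations $tst^{-1} = \varphi(s)$ over $s \in S$ suffice to recover all the defining HNN-relations $tht^{-1} = \varphi(h)$, $h \in H$. This is the standard algebraic reduction: writing $h = s_1^{\epsilon_1}\cdots s_k^{\epsilon_k}$ with $s_i \in S$ and $\epsilon_i = \pm 1$, one computes in the presented group $tht^{-1} = \prod_i (t s_i t^{-1})^{\epsilon_i} = \prod_i \varphi(s_i)^{\epsilon_i} = \varphi(h)$, using that $\varphi$ is a homomorphism and that $S$ generates $H$. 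Britton's lemma guarantees that $H$ embeds in $G$, so the copy of $\langle S \mid R\rangle$ sitting inside the displayed group is faithfully $H$, confirming that the presentation is correct.

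Finally I would read off compact presentability directly from the definition: the generating subset $S \cup \{t\}$ is compact, and every relator has bounded length — the relators of $R$ by the choice of presentation of $H$, and each relator $tst^{-1}w_s^{-1}$ has length at most $3 + m$. The only (mild) obstacle is the uniform bound $m$: one must invoke that the word metric of a compact generating subset is adapted, and hence locally bounded, in order to convert the compactness of $\varphi(S)$ into a single length bound valid for all the new relators simultaneously. Everything else is routine HNN-extension and presentation bookkeeping.
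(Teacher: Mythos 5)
Your proof is correct, but it takes a genuinely different route from the paper. The paper constructs no presentation at all: it observes that the ascending HNN-extension $G$ is isomorphic to $N \rtimes_\alpha \Z$, where $N = \bigcup_{n \ge 0} t^{-n}Ht^{n}$ is open in $G$ and $\alpha$ (conjugation by $t$) engulfs $N$ into the compactly presented open subgroup $H$, and then invokes Proposition \ref{tamet} (a statement about semidirect products $N \rtimes \Z^k$ with an engulfing element, proved as Lemma 8.D.7 in \cite{CH--16}). You instead give the direct argument: augment a bounded presentation $\langle S \mid R \rangle$ of $H$ by the stable letter and the relators $tst^{-1}w_s^{-1}$, extracting the uniform bound $m$ on the lengths of the $w_s$ from the compactness of $\varphi(S)$ together with the local boundedness of the word metric $d_S$ --- that is exactly the right mechanism, and your two-way von Dyck argument (both $h \mapsto tht^{-1}$ and $h \mapsto \varphi(h)$ are homomorphisms into the presented group agreeing on the generating set $S$, hence on all of $H$) does establish that the displayed bounded presentation presents the abstract HNN-extension; the appeal to Britton's lemma is actually superfluous for this. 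Your approach is more elementary and self-contained; what the paper's reduction buys is generality: Proposition \ref{tamet} applies when the kernel group is not compactly generated (as $N = \bigcup_n t^{-n}Ht^n$ typically is not) and covers $\Z^k$-actions, which is what is needed elsewhere, e.g.\ in the proof of Behr's theorem (Theorem \ref{exredcp}). One point you pass over silently: to know that $G$ carries a (unique) locally compact group topology in which $H$ is open --- which you use when asserting that $S \cup \{t\}$ is compact in $G$ --- the paper's HNN setup requires $\varphi : H \to \varphi(H)$ to be an isomorphism of \emph{topological} groups, whereas you are only given a continuous injection with open image; this is automatic here because $H$, being compactly generated, is $\sigma$-compact, so the continuous bijection onto the open (hence locally compact) subgroup $\varphi(H)$ is open by the open mapping theorem. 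With that remark added, your proof is complete.
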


In the situation of Proposition \ref{phnna}, 
denote by $G$ the HNN-extension, and let $N, \alpha$ be defined 
as before Corollary \ref{hnna}.
Then $G \simeq N \rtimes_\alpha \Z$,
and this shows that
Proposition \ref{phnna} is a particular case of the following one:

\begin{prop}
\label{tamet}
Consider a locally compact group 
with a topological semidirect product decomposition $G=N\rtimes\Z^k$, 
such that some element $\alpha$ of $\Z^k$ engulfs $N$ 
into some compactly presented open subgroup of $N$.
\par
Then $G$ is compactly presented.
\end{prop}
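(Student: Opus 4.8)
The plan is to reduce to the case $k=1$, which is essentially Proposition \ref{phnna}, and then to adjoin the remaining $\Z^{k-1}$ directions using the extension-stability of compact presentability from Proposition \ref{modn}. Fix the element $\alpha \in \Z^k$ whose action engulfs $N$ into the compactly presented open subgroup $H \le N$, and write $\alpha$ also for the corresponding topological automorphism of $N$. Since $\Z^k$ is torsion-free, either $\alpha = 0$, in which case the condition $\bigcup_{n \ge 1}\alpha^{-n}(H) = N$ forces $N = H$, so that $G_1 := N = H$ is already compactly presented; or $\langle\alpha\rangle \cong \Z$, and I claim $G_1 := N \rtimes_\alpha \Z$ is compactly presented. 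To see the latter, let $\varphi$ be the restriction of $\alpha$ to $H$: the inclusion $\alpha(H)\subseteq H$ makes $\varphi$ an endomorphism of $H$, it is injective (being the restriction of an automorphism), and it is continuous with open image, since $\alpha$ is a homeomorphism of $N$ carrying the open subgroup $H$ onto the open subgroup $\alpha(H)$. Thus Proposition \ref{phnna} applies to $(H,\varphi)$ and the associated ascending HNN-extension is compactly presented.

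Next I would identify that HNN-extension with $G_1$. By the discussion preceding Corollary \ref{hnna} (the case $K=H$), the ascending HNN-extension of $(H,\varphi)$ is isomorphic, as a topological group, to a semidirect product $M \rtimes \Z$, where $M = \bigcup_{n\ge 0}t^{-n}Ht^n$ and $\Z$ acts by conjugation by $t$, extending $\varphi$. The compatible maps $\alpha^{-n}|_H \colon H \to N$ exhibit $N$ as the direct limit of $H \xrightarrow{\varphi} H \xrightarrow{\varphi}\cdots$, using the engulfing equality $\bigcup_n \alpha^{-n}(H) = N$ in full; matching this with the description of $M$ gives a topological isomorphism $M\rtimes\Z \cong N\rtimes_\alpha\Z = G_1$, so $G_1$ is compactly presented in this case as well.

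It remains to build $G$ from $G_1$. Let $p\colon G \to \Z^k$ be the canonical projection; it is a continuous homomorphism, and since $\Z^k$ is abelian, $\langle\alpha\rangle$ is a closed normal subgroup of $\Z^k$. Hence $G_1 = p^{-1}(\langle\alpha\rangle) = N\rtimes\langle\alpha\rangle$ is a closed normal subgroup of $G$, with $G/G_1 \cong \Z^k/\langle\alpha\rangle$. The quotient $\Z^k/\langle\alpha\rangle$ is a finitely generated abelian group, hence finitely presented as a discrete group, hence compactly presented. Applying Proposition \ref{modn}(2) to the extension $1\to G_1\to G\to \Z^k/\langle\alpha\rangle\to 1$, whose two outer terms are both compactly presented, yields that $G$ is compactly presented.

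I expect the only genuinely delicate point to be in the base case: confirming that the ascending HNN-extension produced by Proposition \ref{phnna} recovers \emph{faithfully} the original semidirect product $N\rtimes_\alpha\Z$, both as abstract groups (via the direct-limit identification of $N$) and as topological groups (both topologies making $H$ an open subgroup). This is precisely where the engulfing hypothesis is consumed. Once the base case is secured, the step to general $k$ is purely formal: the abelianness of $\Z^k$ is exactly what guarantees the normality and closedness of $G_1$ in $G$, and the quotient being finitely generated abelian is what makes Proposition \ref{modn}(2) applicable.
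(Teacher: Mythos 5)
Your proof is correct, but it takes a route the paper does not: the paper offers no proof of Proposition \ref{tamet} at all, deferring entirely to \cite[Lemma 8.D.7]{CH--16}, and in fact it frames the logical dependency in the \emph{opposite} direction --- the remark between Propositions \ref{phnna} and \ref{tamet} uses the isomorphism $\HNN(H,H,\varphi(H),\varphi)\simeq N\rtimes_\alpha\Z$ to present \ref{phnna} as a particular case of \ref{tamet}. You invert this: you use the same isomorphism, checked the other way around via the identification of $N$ with the direct limit $\varinjlim\bigl(H\xrightarrow{\varphi}H\xrightarrow{\varphi}\cdots\bigr)$ (which is exactly where the engulfing equality $\bigcup_{n\ge 1}\alpha^{-n}(H)=N$ is consumed), to settle the case $k=1$ from \ref{phnna}, and then bootstrap to general $k$ through the normal subgroup $G_1=p^{-1}(\langle\alpha\rangle)=N\rtimes\langle\alpha\rangle$ and Proposition \ref{modn}(2), the quotient $\Z^k/\langle\alpha\rangle$ being finitely generated abelian, hence finitely (equivalently compactly) presented. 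This is not circular within the survey's citation structure, since \ref{phnna} carries its own independent reference, \cite[Proposition 8.B.10]{CH--16}, which precedes and does not depend on Lemma 8.D.7; in effect your argument shows that \ref{phnna} and the general statement \ref{tamet} are equivalent modulo a purely formal extension step, which is a tidier logical picture than the survey's one-way remark. You also correctly isolated and discharged the one genuinely delicate point: the abstract isomorphism $\HNN(H,\varphi)\simeq N\rtimes_\alpha\Z$ is topological because both topologies make $H$ an open subgroup with its given topology and the isomorphism restricts to the identity on $H$, so it is automatically a homeomorphism. Two cosmetic simplifications: your separate case $\alpha=0$ (where engulfing forces $N=H$) can be absorbed into the main case, since $\varphi=\mathrm{id}_H$ is a perfectly admissible input to \ref{phnna} and the resulting extension is $N\times\Z$; and $G_1$ is in fact \emph{open} in $G$, since $\langle\alpha\rangle$ is open in the discrete group $\Z^k$, so its closedness --- needed for \ref{modn}(2) --- comes for free.
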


\noindent
\textbf{On proofs.} For 
Propositions \ref{ggci}, 
\ref{exnilp}, 
\ref{H(K)sSL2(K)}, 
\ref{phnna}, 
\ref{tamet}, 
see \cite{CH--16}, respectively
Propositions 8.A.13, 8.A.22, 
8.A.28, 8.B.10, and Lemma 8.D.7.

\begin{prop}
\label{solt}
Let $N_1,N_2$ be totally disconnected non-compact locally compact groups.
For $i = 1,2$, assume that there exist
a topological group automorphism $\alpha_i$ of $N_i$
engulfing $N_i$ into some compact open subgroup $H_i$ of $N_i$.
Consider the automorphism of $N_1 \times N_2$ given by 
$\alpha = (\alpha_1,\alpha_2^{-1})$. 
\par
Then the semidirect product $(N_1 \times N_2)\rtimes_\alpha \Z$ 
is not compactly presented.
\end{prop}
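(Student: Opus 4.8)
The plan is to argue by contradiction using the contrapositive of Corollary \ref{hnna}. Writing $N = N_1 \times N_2$ and $\alpha = (\alpha_1, \alpha_2^{-1})$, the group $G = N \rtimes_\alpha \Z$ is exactly of the form to which Corollary \ref{hnna} applies. So it suffices to verify two things: (i) that $G$ has no non-abelian discrete free subgroup, and (ii) that neither $\alpha$ nor $\alpha^{-1}$ engulfs $N$ into a compactly generated open subgroup. Then the corollary forbids $G$ from being compactly presented, which is the assertion.

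For (i), I would observe that each $N_i$ is locally elliptic: the engulfing hypothesis presents $N_i = \bigcup_{n \ge 0} \alpha_i^{-n}(H_i)$ as an increasing union of the compact open subgroups $\alpha_i^{-n}(H_i)$, so every compact subset lies in one of them. By the standard fact that such directed unions of compact (hence amenable) open subgroups are amenable, each $N_i$ is amenable; so $N_1 \times N_2$ is amenable, and $G$, being an extension of $\Z$ by $N$, is amenable as well. An amenable group contains no non-abelian discrete free subgroup, which gives (i). As a byproduct, the same union shows $N_i$ is not compactly generated (a compact generating set would lie in a single compact $\alpha_i^{-n}(H_i)$); but, as the Baumslag--Solitar case illustrates, this alone cannot rule out compact presentability, which is why the two-sided information in (ii) is essential.

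The heart of the argument is (ii), and it rests on the following Key Lemma: if a topological automorphism $\beta$ of a non-compact locally compact group $M$ engulfs $M$ into a compact open subgroup $H$, then $\beta^{-1}$ cannot engulf $M$ into any compactly generated open subgroup. To prove it, note first that $\beta(H) \subsetneq H$ strictly (otherwise $M = \bigcup_m \beta^{-m}(H) = H$ would be compact), so the module satisfies $\mathrm{mod}(\beta) = \mu(\beta(H))/\mu(H) < 1$ for a Haar measure $\mu$. Now suppose $\beta^{-1}$ engulfed $M$ into a compactly generated open subgroup $V$, so $\beta^{-1}(V) \subset V$ and $\bigcup_n \beta^n(V) = M$. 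Since $M = \bigcup_m \beta^{-m}(H)$ is an increasing union of compact open subgroups and $V$ is generated by a compact set, that set lies in a single $\beta^{-m_0}(H)$, whence $V \subset \beta^{-m_0}(H)$ is compact. But then $\beta^{-1}(V) \subsetneq V$ (again proper, as $M$ is non-compact) with $V$ compact open forces $\mathrm{mod}(\beta^{-1}) < 1$, i.e. $\mathrm{mod}(\beta) > 1$, contradicting $\mathrm{mod}(\beta) < 1$.

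Finally I would deduce (ii) by projecting. If $\alpha = (\alpha_1, \alpha_2^{-1})$ engulfed $N$ into a compactly generated open subgroup $H \subset N_1 \times N_2$, then its image $V := p_2(H)$ under the second coordinate projection is open, compactly generated, and—reading off the two defining conditions of engulfing through $p_2$—satisfies $\alpha_2^{-1}(V) \subset V$ and $\bigcup_n \alpha_2^n(V) = N_2$; that is, $\alpha_2^{-1}$ engulfs $N_2$ into $V$, contradicting the Key Lemma applied to $\beta = \alpha_2$ on $M = N_2$. Symmetrically, engulfing of $N$ by $\alpha^{-1} = (\alpha_1^{-1}, \alpha_2)$ would make $\alpha_1^{-1}$ engulf $N_1$ into $p_1(H)$, contradicting the Key Lemma with $\beta = \alpha_1$ on $M = N_1$. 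The opposite signs in $\alpha = (\alpha_1, \alpha_2^{-1})$ are precisely what make both directions fail simultaneously. I expect the Key Lemma—upgrading compact generation of $V$ to genuine compactness and then extracting the module sign contradiction—to be the main obstacle, the rest being bookkeeping with projections and the invocation of Corollary \ref{hnna}.
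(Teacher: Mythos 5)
Your proposal is correct, but it reaches the contradiction by a genuinely different mechanism than the paper. The paper also passes through the Bieri--Strebel machinery, but in a different way: it first observes that $N = N_1 \times N_2$ is locally elliptic, so that the compactly generated open subgroup of $N$ over which $G$ would split (Corollary \ref{BSsplits}) is automatically \emph{compact}; it then cites Corollary 8.C.19 of \cite{CH--16} to convert the HNN-splitting into the statement that $\alpha^{\varepsilon}$ engulfs $N$ into a compact open subgroup $H$, for some $\varepsilon \in \{1,-1\}$; and the final contradiction is \emph{dynamical}: by the escape Lemma \ref{fichtre}, any $x = (x_1,x_2)$ with $x_i \notin K_i := \bigcap_{n \ge 0} \alpha_i^n(H_i)$ has both its forward and its backward $\alpha$-orbit eventually leaving every compact set, which is incompatible with $\alpha^{\varepsilon n}(x) \in H$ for $n$ large. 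You instead enter through Corollary \ref{hnna}, whose free-subgroup hypothesis you verify via amenability (this is sound: each $N_i$ is an increasing union of compact open subgroups, hence amenable, $G$ is then amenable by extension, and discrete subgroups of locally compact groups are closed, so they inherit amenability), and you contradict its conclusion \emph{factorwise}: project the putative compactly generated open engulfing subgroup to one coordinate (openness, compact generation, and the two engulfing conditions all pass through the coordinatewise projections, since $\alpha$ acts coordinatewise), upgrade compact generation to compactness inside the increasing union $\bigcup_m \beta^{-m}(H)$, and extract the sign contradiction $\mathrm{mod}(\beta) < 1$ versus $\mathrm{mod}(\beta) > 1$ from the proper open inclusions $\beta(H) \subsetneq H$ and $\beta^{-1}(V) \subsetneq V$ of compact open subgroups (properness in both cases correctly deduced from non-compactness of the ambient group). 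Every step checks out. What the paper's route buys is measure-free pointwise dynamics carried out on the product itself, at the price of the external citation of Corollary 8.C.19 from the book; what your route buys is an argument staying entirely within the survey's stated results, with the obstruction localized in a single factor, and with the modular function making transparent why the mixed automorphism $(\alpha_1,\alpha_2^{-1})$ is fatal in both directions. One remark: your Key Lemma does not even need Haar measure --- once $V$ is shown compact, pick $m$ with $V \subset \beta^{-m}(H)$; then $\beta^{n}(V) \subset \beta^{n-m}(H) \subset \beta^{-m}(H)$ for all $n \ge 0$ (using that the family $\beta^{-k}(H)$ increases in $k$ and $\beta^{k}(H) \subset H$ for $k \ge 0$), so $M = \bigcup_{n \ge 1}\beta^{n}(V) \subset \beta^{-m}(H)$ would be compact, a contradiction; this shortcut replaces the module computation but changes nothing essential.
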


\begin{lem}
\label{fichtre}
Let $N$ be a non-compact locally compact group
and $\beta$ a topological group automorphism engulfing $N$
into some compact open subgroup $H$ of $N$.
Set $K = \bigcap_{n \ge 0} \beta^n (H)$.
\par
For all $x \in N$ with $x \notin K$ and for all compact subset $C$ of $N$,
we have $\beta^{-n}(x) \notin C$ for $n$ large enough.
\end{lem}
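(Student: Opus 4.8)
The plan is to exploit the two opposed monotonicities built into the engulfing hypothesis. Because $\beta(H)\subseteq H$, the images $\beta^{m}(H)$ form a decreasing chain of open subgroups, so $K=\bigcap_{m\ge 0}\beta^{m}(H)$ and the assumption $x\notin K$ furnishes an index $m_0$ with $x\notin\beta^{m_0}(H)$; by the nesting this persists, giving $x\notin\beta^{m}(H)$, equivalently $\beta^{-m}(x)\notin H$, for every $m\ge m_0$. This is the ``escape from $H$'' half of the argument, and it is the only place the hypothesis $x\notin K$ is used.

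The complementary half uses that the inverse images $\beta^{-n}(H)$ form an increasing chain of open subgroups whose union is all of $N$, which is precisely the engulfing condition $\bigcup_{n\ge 1}\beta^{-n}(H)=N$. Since the $\beta^{-n}(H)$ are open and cover the compact set $C$, and since the chain is increasing, there is a single index $n_0$ with $C\subseteq\beta^{-n_0}(H)$, i.e.\ $\beta^{n_0}(C)\subseteq H$. I would then combine the two halves directly: for $n\ge m_0+n_0$, were $\beta^{-n}(x)$ to lie in $C$, it would lie in $\beta^{-n_0}(H)$, whence $\beta^{-(n-n_0)}(x)=\beta^{n_0}\!\big(\beta^{-n}(x)\big)\in H$ with exponent $n-n_0\ge m_0$, contradicting the previous paragraph. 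Hence $\beta^{-n}(x)\notin C$ for all $n\ge m_0+n_0$, which is the assertion.

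There is no deep obstacle here; the entire content is to align the decreasing family $\beta^{m}(H)$ against the increasing family $\beta^{-n}(H)$ and to keep the exponent bookkeeping straight. The one step deserving care is the reduction of $C$ into a single subgroup $\beta^{-n_0}(H)$, since this is exactly where both the openness of $H$ and the engulfing equality $\bigcup_{n\ge 1}\beta^{-n}(H)=N$ are indispensable. It is worth remarking that neither the compactness of $H$ nor the non-compactness of $N$ actually enters the argument; the latter hypothesis serves only to make the statement non-vacuous, since for compact $N$ the engulfing condition forces $H=N$ and hence $K=N$, leaving no $x\notin K$ to test.
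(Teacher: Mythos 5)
Your proof is correct and follows essentially the same route as the paper's: both arguments extract a single index $n_0$ with $C\subseteq\beta^{-n_0}(H)$ from the increasing open cover $\bigl(\beta^{-n}(H)\bigr)_{n\ge 0}$ of $N$, extract $m_0$ with $x\notin\beta^{m_0}(H)$ from $x\notin K$, and combine the two for all $n\ge m_0+n_0$ with the same exponent bookkeeping. Your closing observations --- that the compactness of $C$ (not of $H$) is what matters, and that non-compactness of $N$ only prevents the statement from being vacuous --- are accurate glosses rather than a different argument.
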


\begin{proof}
There exists $n_1 \ge 0$ such that $C \subset \beta^{-n_1}(H)$,
because $\left( \beta^{-n}(H) \right)_{n \ge 0}$ is an open covering of $N$.
There exists $n_2 \ge 0$ such that $x \notin \beta^{n_2}(H)$, 
because $x \notin K$.
For any $n \ge n_1 + n_2$, we have therefore
$\beta^{-n}(x) \notin \beta^{-(n-n_2)}(H)$;
since $C \subset \beta^{-n_1}(H) \subset \beta^{-(n-n_2)}(H)$,
we have also $\beta^{-n}(x) \notin C$.
\end{proof}

\begin{proof}[On the proof of Proposition \ref{solt}]
For $i = 1,2$, the group $N_i$ is locally elliptic.  
Indeed, for every compact subset $C$ of $N_i$,
there exists an integer $n \ge 1$ such that $\alpha_i^n(C) \subset H_i$.
Hence $N:= N_1 \times N_2$ is locally elliptic.
\par

Suppose by contradiction that $G:= N \rtimes_\alpha \Z$ is compactly presented.
Then $G$ splits as an HNN-extension over some open subgroup of $N$
that is compactly generated by Corollary \ref{BSsplits}, 
and therefore compact by local ellipticity.
By Corollary 8.C.19 of \cite{CH--16}, 
it follows that $\alpha^{\varepsilon}$ engulfs $N$ into some
compact open subgroup $H$ of $N$, for an appropriate $\varepsilon \in \{1, -1\}$.
In particular, for every $x \in N$, we have $\alpha^{\varepsilon n}(x) \in H$ for $n$ large enough.
\par

For $i=1,2$, set $K_i = \bigcap_{n \ge 0} \alpha_i^n (H_i)$. 
Choose $x = (x_1, x_2) \in N = N_1 \times N_2$ with $x_1 \notin K_1$ and $x_2 \notin K_2$.
By Lemma \ref{fichtre}, for every compact subset $C$ of $N$ (for example for $C=H$),
we have $\alpha^{-\varepsilon n}(x) \notin C$ and $\alpha^{\varepsilon n}(x) \notin C$
for $n$ large enough.
As this contradicts the conclusion of the last paragraph,
$G$ cannot be compactly presented.
\end{proof}

\begin{exe}
\label{k1k2}
Let $\K_1,\K_2$ be ultrametric non-discrete locally compact fields,
given together with their canonical absolute value 
$\K_i \ni \lambda \mapsto \vert \lambda \vert \in \R_+$ (for $i=1,2$). 
Fix $\lambda_1 \in \K_1^*$ and $\lambda_2 \in \K_2^*$. 
Consider the semidirect product $G=(\K_1 \times \K_2)_{(\lambda_1,\lambda_2)}\rtimes\Z$,
with respect to the action of $\Z$ defined by $a$

Then
\begin{itemize}
\item 
If either $\vert \lambda_1 \vert$ or $\vert \lambda_2 \vert$ is equal to 1, 
then $G$ is not compactly generated.
\item 
if $|\lambda_1|<1<|\lambda_2|$ or $|\lambda_2|<1<|\lambda_1|$, then $G$ is compactly generated but not compactly presented (as a particular case of Proposition \ref{solt})
\item 
if $|\lambda_1|$ and $|\lambda_2|$ are both $<1$ or both $>1$, 
then $G$ is compactly presented (by Corollary \ref{hnna}).
\end{itemize}
In particular, given two primes $p, q$, consider the action $\alpha(p^k, q^{-\ell})$ of $\Z$
on $\Q_p \times \Q_q$ for which $1$ acts by $(x,y) \mapsto (p^k x, q^{-\ell} y)$,
for some positive integers $k, \ell$;
then  the group $(\Q_p \times \Q_q) \rtimes_{\alpha(p^k, q^{-\ell})} \Z$ is not compactly presented.
\end{exe}

\begin{exe}
\label{exBSmotiv}
Consider two distinct primes $p,q$ and, for $i=1,2$, 
the semidirect product $\Gamma_i = \Z[1/pq] \rtimes_{n_i} \Z$, 
where $\Z$ acts by multiplication by $n_i$, with $n_1=pq$ and $n_2=p/q$. 
Then both $\Gamma_1$ and $\Gamma_2$ are finitely generated. 
\par

Consider moreover the locally compact group 
$G_i = (\R \times \Q_p \times \Q_q) \rtimes_{n_i} \Z$,
where $\rtimes_{n_i}$ indicates that $1 \in \Z$ acts by multiplication by $n_i$
on each of the three factors $\R, \Q_p, \Q_q$.
The group $\Gamma_i$ is naturally a cocompact lattice in $G_i$.

\par

It follows from Example \ref{k1k2} that $G_1/\R$ is compactly presented 
and $G_2/\R$ is not compactly presented. 
Since $\R$ itself is compactly presented, it follows from Theorem \ref{modn} 
that $G_1$ is compactly presented and $G_2$ is not compactly presented. 
By Proposition \ref{GroupsHomosCoarseIso}, the inclusion of $\Gamma_i$ into $G_i$ is a coarse equivalence. 
By Proposition \ref{cpco}, we deduce that $\Gamma_1$ is finitely presented while $\Gamma_2$ is not.
\end{exe}

\begin{thm}[Behr]
\label{exredcp}
If $\mathbf{G}$ is a reductive $\K$-group, 
for some non-discrete locally compact field $\K$, 
the group $G = \mathbf{G}(\K)$ of $\K$-points of $\mathbf{G}$ is compactly presented. 
\end{thm}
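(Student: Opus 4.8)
The plan is to treat separately the Archimedean and non-Archimedean cases, according to the classification of non-discrete locally compact fields.

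First suppose $\K$ is Archimedean, so $\K = \R$ or $\K = \C$. Then $G = \mathbf{G}(\K)$ is a real Lie group, and since a linear algebraic group over $\R$ has only finitely many connected components for the real topology, the quotient $G/G^\circ$ is finite, in particular compact. Proposition \ref{ggci} then applies verbatim and yields that $G$ is compactly presented.

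The substance of the theorem is the non-Archimedean case, where $\K$ is a local field with ring of integers $\mathcal{O}$ and finite residue field. Here the plan is to realize $G$ as acting geometrically on a contractible complex and to import compact presentation through the coarse dictionary. Concretely, Bruhat--Tits theory attaches to $\mathbf{G}$ its extended Euclidean building $X$, a complete $\mathrm{CAT}(0)$ metric space on which $G$ acts by isometries; the central torus is accommodated by the Euclidean factor of the extended building, so that $\mathbf{G}$ reductive (not merely semisimple) is allowed. Two features are decisive: the action of $G$ on $X$ is proper and cocompact, and $X$, being $\mathrm{CAT}(0)$, is contractible; since $X$ is moreover a building with cells of bounded size and bounded local structure, it is large-scale geodesic and coarsely simply connected.

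With these inputs, the conclusion follows formally. The proper cocompact isometric action makes the orbit map $G \to X$ a coarse equivalence (the metric \v Svarc--Milnor lemma); in particular $G$ is compactly generated, hence $\sigma$-compact. Since coarse simple connectedness is a coarse invariant (Proposition \ref{csccoarseinv}) and $X$ is coarsely simply connected, an adapted pseudo-metric $d$ on $G$ makes $(G,d)$ coarsely simply connected. Proposition \ref{cpco} then identifies this with compact presentation of $G$, completing the argument. This mirrors the strategy used for Proposition \ref{exnilp}, where a proper homomorphism with cocompact image into a compactly presented group is combined with Propositions \ref{cpco} and \ref{GroupsHomosCoarseIso}. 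The main obstacle is entirely the geometric input from Bruhat--Tits theory in the non-Archimedean case: the construction of the extended building, the verification that the $G$-action is proper and cocompact, and the contractibility (hence coarse simple connectedness) of $X$. These facts are deep and constitute the real content; once granted, the passage to compact presentation is a routine application of the coarse machinery developed above. This is in essence Behr's original route via reduction theory.
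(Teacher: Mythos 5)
Your argument is correct, and in the Archimedean case it coincides with the paper's (finitely many components, then Proposition \ref{ggci}); but in the non-Archimedean case you take a genuinely different route from the one the paper sketches. The paper avoids Bruhat--Tits theory altogether: it produces a closed cocompact \emph{solvable} subgroup of $G$ satisfying the hypotheses of Proposition \ref{tamet} --- a semidirect product with $\Z^k$ in which some element engulfs the group into a compactly presented open subgroup --- and then transfers compact presentation to $G$ through Propositions \ref{cpco} and \ref{GroupsHomosCoarseIso}; the only structure-theoretic input is the existence of such a cocompact solvable subgroup (essentially the cocompactness of a minimal parabolic, i.e.\ projectivity of the flag variety, plus the structure of $\K^*$), with the case of $\SL_n(\K)$ worked out in the proof of Theorem 8.D.12 of \cite{CH--16}. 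You instead invoke the full strength of Bruhat--Tits theory: the extended building $X$, properness and cocompactness of the $G$-action (and you are right that the Euclidean factor is exactly what restores properness when the center is noncompact), and contractibility via $\mathrm{CAT}(0)$; then the \v{S}varc--Milnor lemma makes the orbit map a coarse equivalence, and Propositions \ref{csccoarseinv} and \ref{cpco} conclude. Both routes are sound. Yours buys a stronger geometric picture --- a model geometric action on a $\mathrm{CAT}(0)$ space, useful well beyond compact presentation --- at the cost of a much heavier black box, and it silently uses two auxiliary facts that should be cited since they are not in this paper's toolkit: the locally compact version of \v{S}varc--Milnor (the action must be continuous and metrically proper, in the sense made precise in \cite{CH--16}) and the fact that a simply connected geodesic space is coarsely simply connected (also in \cite{CH--16}). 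One historical quibble: your closing claim that the building argument ``is in essence Behr's original route via reduction theory'' is off --- Behr's proof in \cite{Behr--67} predates Bruhat--Tits buildings and proceeds by reduction theory proper; the paper explicitly presents its strategy as different from Behr's, and so, in fact, is yours.
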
 

Here is the strategy for a proof, different from that in \cite{Behr--67}. 
When $\K$ is Archimedean, then $G$ has finitely many connected components 
and Proposition \ref{ggci} applies.
Otherwise, $G$ admits some closed cocompact (solvable) subgroup 
satisfying the hypotheses of Proposition \ref{tamet};
details for the simpler case of $\SL_n(\K)$ 
can be found in the proof of Theorem 8.D.12 in \cite{CH--16}.

\begin{cor}
Let $G$ be as in Theorem \ref{exredcp}.
Every cocompact lattice in $G$ is finitely presented.
\end{cor}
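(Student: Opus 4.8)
The plan is to show that the inclusion $\Gamma \hookrightarrow G$ of a cocompact lattice is a coarse equivalence, transport compact presentability across it via Proposition \ref{cpco}, and then invoke the fact that a discrete group is compactly presented precisely when it is finitely presented. This is exactly the argument already used in Example \ref{exBSmotiv} for the lattices $\Gamma_i \subset G_i$.

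First I would record the two standing facts: the group $G = \mathbf{G}(\K)$ is $\sigma$-compact (it is second countable) and it is compactly presented by Theorem \ref{exredcp}. Let $\Gamma$ be a cocompact lattice, that is, a discrete subgroup with $G/\Gamma$ compact. Since $\Gamma$ is discrete it is closed in $G$; and since $G$ is $\sigma$-compact, $\Gamma$ meets each compact subset in a finite set, so $\Gamma$ is countable and hence itself $\sigma$-compact. This ensures that both $G$ and $\Gamma$ lie in the setting where Proposition \ref{cpco} applies.

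Next I would apply Proposition \ref{GroupsHomosCoarseIso} to the inclusion homomorphism $\iota \colon \Gamma \to G$. Its kernel is trivial, a fortiori compact, and its image $\Gamma$ is closed, so $\iota$ is proper and therefore coarsely expansive; its image is cocompact by hypothesis, so $\iota$ is essentially surjective; and being a continuous homomorphism, $\iota$ is automatically a coarse map. By Proposition \ref{GroupsHomosCoarseIso} it follows that $\iota$ induces an isomorphism in the coarse category, so that $\Gamma$ and $G$ are coarsely equivalent.

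Finally, compact presentability is invariant under coarse equivalence among $\sigma$-compact locally compact groups (Proposition \ref{cpco}), so $\Gamma$ is compactly presented; being discrete, it is finitely presented by the Example following the definition of compact presentation. No step here is genuinely difficult, since the statement is a clean application of the coarse-invariance machinery; the only point demanding care is the verification that $\iota$ satisfies the hypotheses of Proposition \ref{GroupsHomosCoarseIso}, namely that a discrete cocompact subgroup is closed, so that the inclusion is proper, and that the ambient group is $\sigma$-compact, so that Proposition \ref{cpco} is available.
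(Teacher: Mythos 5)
Your proof is correct and follows exactly the route the paper intends: the inclusion $\Gamma \hookrightarrow G$ is proper with cocompact image, hence a coarse equivalence by Proposition \ref{GroupsHomosCoarseIso}, so compact presentability transfers via Proposition \ref{cpco} and reduces, for the discrete group $\Gamma$, to finite presentability --- the same argument the paper itself deploys in Example \ref{exBSmotiv}. Your verifications (discrete subgroups are closed, $\Gamma$ is countable hence $\sigma$-compact, and $\sigma$-compactness of the source is what upgrades compact kernel plus closed image to properness) are precisely the points needing care, and they are handled correctly.
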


In case $\K$ is non-Archimedean, recall that every lattice in $G$
is cocompact \cite{Tama--65}.

\vskip.4cm

\noindent
Y.C.: Laboratoire de  Math\'ematiques d'Orsay \\
Universit\'e Paris-Sud,
CNRS \\
Universit\'e de Paris-Saclay \\
91405 Orsay, France \\
yves.cornulier@math.u-psud.fr

\vskip.4cm

\noindent
P. H.: Section de math\'ematiques \\
Universit\'e de Gen\`eve, C.P.~64 \\
CH--1211 Gen\`eve~4, Suisse \\
Pierre.delaHarpe@unige.ch

\end{document}